\DeclareMathOperator*{\argmax}{arg\,max}
\newcommand{\mO}{\mathcal{O}}
\newcommand\smallO{
  \mathchoice
    {{\scriptstyle\mathcal{O}}}
    {{\scriptstyle\mathcal{O}}}
    {{\scriptscriptstyle\mathcal{O}}}
    {\scalebox{.7}{$\scriptscriptstyle\mathcal{O}$}}
  }
\newtheorem{lemma}{Lemma}[section]
\newtheorem{theorem}{Theorem}[section]
\newtheorem{corollary}{Corollary}[section]
\theoremstyle{remark}
\theoremstyle{definition}
\numberwithin{equation}{section}
\DeclareMathOperator{\curl}{curl}
\DeclareMathOperator{\Div}{div}
\DeclareMathOperator{\grad}{grad}
\title{On an inverse elastic wave imaging scheme for \\nearly incompressible materials}
\author{\and Jingzhi Li \thanks{Department of Mathematics, Southern University of Science and
Technology, Shenzhen, P. R. China. ({\tt li.jz@sustc.edu.cn})} \and Hongyu Liu \thanks
{Department of Mathematics, Hong Kong Baptist University, Kowloon Tong, Hong Kong SAR, P. R. China.
({\tt hongyu.liuip@gmail.com; hongyuliu@hkbu.edu.hk})} \and Hongpeng Sun
\thanks {Institute for Mathematical Sciences, Renmin University of China, Beijing, P. R. China.
({\tt hpsun@amss.ac.cn})} }
\date{}
\begin{document}

\maketitle

\begin{abstract}

This paper is devoted to the algorithmic development of inverse elastic scattering problems.
We focus on reconstructing the locations and shapes of elastic scatterers with known dictionary data for the nearly incompressible materials. The scatterers include non-penetrable rigid obstacles and penetrable mediums, and we use time-harmonic elastic point signals as the incident input waves. The scattered waves are collected in a relatively small backscattering aperture on a bounded surface. A two-stage  algorithm is proposed for the reconstruction and only two incident waves of different wavenumbers are required. The unknown scatterer is first approximately located by using the measured data at a small wavenumber, and then the shape of the scatterer is determined by the computed location of the scatterer together with the measured data at a regular wavenumber. The corresponding mathematical principle with rigorous analysis is presented. Numerical tests illustrate the effectiveness and efficiency of  the proposed method.

\medskip

\medskip

\noindent{\bf Keywords:}~~inverse scattering, elastic wave propagation, reconstruction scheme, nearly impressible materials

\noindent{\bf 2010 Mathematics Subject Classification:}~~35R30, 35P25, 78A46

\end{abstract}

\section{Introduction}\label{sec:intro}

Inverse scattering associated with acoustic, electromagnetic and elastic waves are important for various applications
including sonar and radar imaging, geophysical exploration, seismology, medical imaging and remote sensing; see \cite{AA1, AA2, AA3, CK, Kir1} and the references therein.
Inverse scattering problems are concerned with the recovery of unknown scatterers by wave probing. To that end, one sends an incident detecting wave to probe the scatterers, and then measures the scattered wave data away from the scatterers. By using the measurement data, one can infer knowledge about the unknown scatterers including locations, shapes or material properties of the scatterers.

In this paper, we mainly consider the inverse elastic wave imaging. In order to uniquely reconstruct the unknown obstacles or medium, one usually needs scattered data of full aperture for incident elastic plane waves of all directions at least theoretically; see \cite{PH} for the obstacles case and \cite{PH0} for the medium case with extra scattering data with multiple frequencies within some positive real interval. However, this causes a lot of challenges for practical applications, and particularly is not efficient for real-time applications. One reason is that no priori information is used. Inspired by the modern technology including machine learning, several imaging models and efficient numerical schemes with a priori information from dictionary data are developed \cite{AA1,AA2,AA3,LWY}. In \cite{AA1}, an efficient imaging procedure was developed for target identification based on the dictionary matching with precomputed generalized polarization tensors, and in \cite{LWY}, a
fast gesture computing scheme with acoustic wave is developed with precomputed scattering data.

To motivate the current study, we briefly discuss the ideas of the design in \cite{LWY}. Time-harmonic point wave signals are first emitted, and one then collects the scattering wave data within a relatively small backscattering aperture. The reconstruction process is divided into two steps. First, a low-frequency wave signal is emitted and one then uses the collected scattering data to determine the location of the scatterer. Second, a regular-frequency (compared to the size of the scatterers) wave signal is emitted, and one then uses the collected scattering data to determine the shape of the scatterer. Besides, the numerical implementation is totally ``direct" without any inversions or iterations, and hence it is very fast and robust.

Now we turn our focus to the inverse elastic problem. The nearly incompressible material is widely studied in engineering \cite{MDR,SMG} and mathematical community \cite{BM, SBC}.
For homogeneous deformations, the Lam\'{e} coefficients $\lambda$ and $\mu$ have the following descriptions \cite{LAU,SBC},
\begin{equation}\label{eq:strain:stree:rela}
\lambda  = \frac{\nu E}{(1+\nu)(1-2\nu)}, \quad \mu = \frac{E}{2(1+\nu)},
\end{equation}
where $E$ is the \emph{Young's modulus} and $\nu$ is the \emph{Poisson's ratio}. By \eqref{eq:strain:stree:rela}, we conclude the following limiting properties,
\begin{equation}\label{eq:nearly:in:para}
  \lambda \rightarrow +\infty \quad \text{while} \quad \nu \rightarrow (1/2)-,
\end{equation}
where this kind of material with \eqref{eq:nearly:in:para} is
commonly referred to as \emph{nearly incompressible material} \cite{SBC}. By \eqref{eq:nearly:in:para}, one can also use $\lambda/\mu \gg 1$ or $\lambda \gg \mu$ to characterize this property because of the boundedness of $\mu$ \cite{BLP}. The nearly incompressible materials are frequently seen in real applications including polymer materials, rubber and biologic tissues \cite{YT, MRR, MDR, KV}. The numerical computation of static elasticity or elastic wave propagation in the nearly incompressible materials is also very challenging and important, and various finite element or spectral element methods are developed in the last several decades for overcoming the computational difficulty \cite{SBC, BLP,BM, SMG}. In addition, there is also some related study on the inverse problems for engineering applications \cite{KV}.

In this article, we aim to further develop the imaging technique with a prior dictionary data in a practical setting of using elastic waves. There are several challenges we are confronted with the design of the reconstruction schemes. First, different from \cite{HLLS}, full aperture scattering data is usually neither practical nor feasible in assorted applications for nearly incompressible materials. The measurement information is very limited  in our numerical scheme, and the only available information is the backscattering data in a small aperture associated with a few time harmonic signals. Second, the whole reconstruction process should be completed in a timely manner. Third, elastic point signal includes the mixture of the fundamental solutions of two different wave numbers, and is more complicated than the acoustic or electromagnetic point signal which has only one wave number. Moreover, it is difficult to separate the shear wave and the pressure wave from the finite-aperture scattering data with different wave numbers, and meanwhile the separation is very critical for the ``direct" imaging method developed in \cite{LWY}.

Nonetheless, we manage to overcome the aforementioned challenges in our study by incorporating the key dictionary ingredient as mentioned before. We assume the shape of the unknown scatterers are all from a \emph{dictionary} that is known a priori, which is reasonable since the shapes of various scatterers can be collected by experience and learning.
A few dictionary techniques have been developed for inverse acoustic or electromagnetic scattering problems; see \cite{AA1, AA2, AA3, LWY} and the references therein. The critical ingredients of a dictionary method are the design of the appropriate dictionary class and the dictionary searching method. These are also the major technical contributions of the current article including the low frequency analysis for scattering by the nearly incompressible materials.
One needs first to determine the location of the scatterer. After that, one can use the dictionary matching algorithm to determine the specific shape. However, in the dictionary class, the scattering information of the admissible shapes should be independent of any location requirement. This challenge can be overcome by using the so-called translation relation if incident plane waves are used; see \cite{AA1, AA2, AA3, LWY, LLS}. However in the current design, elastic point signals with two different wave numbers are used and the scattering data are collected in a special manner. This requires some special technical analysis and treatments to determine the dominating shear or pressure wave in our study. Moreover, for timely dictionary matching, we propose a fast and robust ``direct" method with the dominating wave based on our detailed theoretical analysis.

The rest of the paper is organized as follows. In section \ref{sec:mathground}, we present the mathematical framework for our imaging scheme with elastic waves. In section \ref{sec:analysis:lowfre}, we give the necessary low frequency analysis for nearly incompressible materials with preparation for locating scatterers with a low frequency.  In section \ref{sec:two-stage}, we present the two-stage recognition algorithm based on the theoretical analysis. In section \ref{sec:num}, extensive numerical tests are conducted to verify the effectiveness and efficiency of the proposed algorithm. We conclude our study in Section~\ref{sect:conclusion} with some relevant discussion.

\section{Mathematical Framework}\label{sec:mathground} 

In this section, we present the general mathematical setting and fundamentals for the proposed reconstruction scheme.
The shape of the unknown scatterer is supposed to be a $C^2$ domain $\Omega$, which is assumed to have a connected complement $\Omega^c := \mathbb{R}^{3} \backslash \bar \Omega$. It is assumed that there exists a {\it dictionary} of $C^2$ domains, which can be calibrated beforehand, i.e.,
\begin{equation}\label{eq:dic:class}
\mathfrak{D} = \{ D_j\}_{j=1}^{N}, \quad N \in \mathbb{N}.
\end{equation}
Here each $D_j$ is simply connected and contains the origin, such that there exists a translation operator $F: \mathbb{R}^3 \rightarrow \mathbb{R}^3$,
\begin{equation}\label{eq:trasi:domain}
\Omega = F(D) = D + z: = \{ x+z; \ x \in D \}, \quad D  \in \mathfrak{D}, \quad z \in \mathbb{R}^3.
\end{equation}
Our reconstruction strategy with elastic waves is a typical inverse elastic scattering problem.  In the setup of the current study, the scatterer $\Omega$ or the dictionary domain $D_j$ shall be assumed to be a non-penetrable rigid obstacles or a penetrable medium scatterer, which  covers many practical scenarios of important applications \cite{AR,Kup}. The incident elastic point waves were located at a fixed position. With the incident waves, one then measures the scattered wave due to the unknown scatterer $\Omega$ on a measurement surface $\Gamma$ with multiple receivers. In our study, the measurement surface $\Gamma$ contains the location of the incident point waves.

Throughout the rest of the paper, we need the following two assumptions,
\begin{equation}\label{eq:assume:1}
\|D_j\|: = \max_{x \in D} |x| \simeq 1, \quad 1 \leq j \leq N,
\end{equation}
and
\begin{equation}\label{eq:assume:2}
|z| \gg 1,
\end{equation}
where $z$ is the location of $\Omega$ as in \eqref{eq:trasi:domain}. Assumption \eqref{eq:assume:1} means that the size of the scatterer $\Omega$ can be calibrated such that the regular frequency scale is characterized as $\frac{2 \pi}{k_s} \simeq \|\Omega\|$ and the low frequency of the elastic waves is characterized as $\frac{2 \pi}{k_s} \gg \|\Omega\|$, where $k_s\in\mathbb{R}_+$ signifies the wave number of the shear waves. This is practically feasible, since the frequency band of the elastic waves is of a wide range \cite{AR}. Throughout the rest of the paper, for exposition convenience, we always assume that $\rho\equiv 1$.


Actually, we would like to emphasize that we only need to collect the scattered field of $\Omega$ in a small aperture scattered by the elastic point source, which is convenient and quite practical. We also point out that this condition \eqref{eq:assume:2} is  mainly required for the theoretical justification of the proposed algorithm in what follows. Indeed, in our numerical tests, the proposed reconstruction algorithm works effectively and efficiently, as long as the scatterer $\Omega$ is located away from the point sources of a reasonable distance.

For the setup described above, we next introduce the direct elastic wave scattering. The displacement of a time-harmonic elastic wave is governed by the following Navier's equations,
\begin{equation}\label{eq:elastic:wave}
\mu \Delta u(x) +(\lambda + \mu) \grad \Div u(x) + \omega^2 \rho  u(x)  = 0,
\end{equation}
where $\lambda$ and $\mu$ are Lam\'{e} coefficients, $\rho$ the density, and $\omega$ the frequency. In the sequel, we take one scatterer $\Omega$ for example in the analysis. We mainly consider two cases, i.e., $\Omega$ is a non-penetrable rigid obstacle or  $\Omega$ is a penetrable medium.

For the rigid obstacle case, the elastic wave scattering is governed by the following homogeneous PDE system in $\mathbb{R}^3$, i.e., to find $u_{\omega,\Omega}^{s} \in H^{1}_{loc}(\Omega^c)$, such that
\begin{equation}\label{eq:scat:pec}
\begin{cases}
&\mu \Delta u_{\omega,\Omega}^s(x) +(\lambda + \mu) \grad \Div u_{\omega,\Omega}^s(x) + \omega^2 u_{\omega,\Omega}^s(x)  = 0,
 \quad x \in \Omega^c,  \\
& u_{\omega,\Omega}^s(x) + u_{\omega,p,ep}^i(x,y) = 0, \quad x \in \partial \Omega, \quad y \in \Omega^{c},  \\
&\displaystyle{\lim_{|x| \rightarrow \infty}|x|( \curl \curl u_{\omega,\Omega}^s \times \hat{x} -i k_{s}u_{\omega,\Omega}^s) = 0}, \\
 & \displaystyle{\lim_{|x| \rightarrow \infty}|x|( \hat{x}\cdot \grad \Div u_{\omega,\Omega}^s -ik_p \Div u_{\omega,\Omega}^s) = 0},
\end{cases}
\end{equation}
where the last two equations are known as the Kupradze radiation conditions \cite{Kup}. Moreover, $u_{\omega,p,ep}^i$ the elastic incident point source wave with a polarization $p\in\mathbb{R}^3$ and a source position $y\in\mathbb{R}^3$ \cite{PHhab},
\begin{equation}\label{eq:point:source}
u_{\omega,p,ep}^{i}(x,y)=\Gamma(x,y)p, \quad y \neq x,
\end{equation}
where $\Gamma(x,y)$ is the fundamental solution of \eqref{eq:elastic:wave} with $\rho \equiv 1$ (see \cite[Chap. 5]{PHhab}, \cite{PH}),
\begin{align}
  \Gamma(x,y)p :&= \frac{e^{ik_{s}|x-y|}}{4 \pi  \mu |x-y|}p + \frac{1}{\omega^2}\grad_{x} \Div_{x} \left[\frac{e^{ik_s|x-y|} -e^{ik_p|x-y|}}{4 \pi |x-y|}p\right] \label{eq:pointsource:1}\\
&=   \frac{1}{\omega^2}\curl_{x} \curl_{x} \left[\frac{e^{ik_{s}|x-y|}}{4 \pi |x-y|}p\right] - \frac{1}{\omega^2} \grad_{x} \Div_{x} \left[\frac{e^{ik_{p}|x-y|}}{4 \pi |x-y|}p\right],\label{eq:point}
\end{align}
with the wave numbers $k_s, k_p > 0 $ given by
\begin{equation*}
k_s = \frac{\omega}{\sqrt{\mu}},\ \quad k_p = \frac{\omega}{\sqrt{2\mu + \lambda}}.
\end{equation*}

For the medium case, denoting the mass density as $\rho_{\Omega}(x)$ and $n_{\Omega} =1 -\rho_{\Omega}$, we assume $n_{\Omega}$ has a compact support, and define
$\Omega: = \{x \in \mathbb{R}^3: n_{\Omega}(x) \neq 0\}$ as the non-homogeneous medium.
The elastic medium scattering problem of an inhomogeneous medium $\Omega$ with the incident elastic point source reads as follows:
to find $u_{\omega,\Omega}^s \in H^{1}_{loc}(\mathbb{R}^3)$, such that
\begin{equation}\label{eq:scat:medium}
\begin{cases}
&\mu \Delta u_{\omega,\Omega}(x) +(\lambda + \mu) \grad \Div u_{\omega,\Omega}(x) + \omega^2  (1-n_{\Omega}) u_{\omega,\Omega}(x)  = 0, \quad x \in \mathbb{R}^3\backslash \{y\},  \\
& u_{\omega,\Omega}(x) = u_{\omega,\Omega}^s(x) + u_{\omega,p,ep}^i(x,y), \quad  x \in \mathbb{R}^3\backslash \{y\},  \\
& \displaystyle{\lim_{|x| \rightarrow \infty}|x|( \curl \curl u_{\omega,\Omega}^s \times \hat{x} -i k_{s}u_{\omega,\Omega}^s) = 0},  \\
& \displaystyle{\lim_{|x| \rightarrow \infty}|x|( \hat{x}\cdot \grad \Div u_{\omega,\Omega}^s -ik_p \Div u_{\omega,\Omega}^s) = 0}.
\end{cases}
\end{equation}
The well-posedness of the direct scattering problem \eqref{eq:scat:pec} and \eqref{eq:scat:medium} are known \cite{PH, PH0, PHhab}.
The main focus of this paper is the following inverse problem:

\emph{Given the measured scattering field $u_{\omega,\Omega}^s(x)$ on a bounded surface $\Lambda$, to find the location $z$ and the shape of the scatterer $\Omega$ for the nearly incompressible material.}

 As introduced in Section \ref{sec:intro}, our reconstruction algorithm contains two stages. In the first stage, we locate the scatterer $\Omega$ by the measured scattering field $u_{\omega,\Omega}^s$ in \eqref{eq:scat:pec} or \eqref{eq:scat:medium} under low frequency on a bounded surface $\Lambda$.  To this end, we develop specially designed indicator functionals, which are originated from the low frequency analysis of the nearly incompressible materials. Once the location is found, we then collect the measured scattered field $u_{\omega,\Omega}^s$ on $\Lambda$ as in \eqref{eq:scat:pec} or \eqref{eq:scat:medium} under regular frequency to reconstruct the shape of $\Omega$. In each process, only one incident point source wave is needed. Additionally, for the shape determination, we shall benefit from the scattering field of $D$ scattered by the incident plane wave stored in the precomputed dictionary. With the priori information in the dictionary and the dictionary matching process, we can reconstruct the shape efficiently. In light of these, we next introduce the elastic wave scattering of $D$ due to an incident plane wave, which also includes both the non-penetrable rigid obstacle case and the penetrable medium case.

 We first introduce the elastic wave scattering of the rigid scatterer $D$, i.e., to find the radiating field $u^s(D,d,p;x) \in H^{1}_{loc}(D^c)$ satisfying the Kupradze radiation conditions, such that
\begin{align}
&\mu \Delta u^s(D,d,p;x) +(\lambda + \mu) \grad \Div u^s(D,d,p;x) + \omega^2 u^s(D,d,p;x)  = 0,
 \ \  x \in D^c = \mathbb{R}^3\backslash \bar D, \notag \\
& u^s(D,d,p;x) + u^i(x,d,p) = 0, \quad   x \in \partial D. \label{eq:scat:pec:plane}
\end{align}

For the elastic medium scattering of $D$, similar to the case of $\Omega$, we need to introduce $\rho_{D}(x)$ and $n_{D} =1 -\rho_{D}$. By the translation relation \eqref{eq:trasi:domain}, we have
\begin{equation}\label{eq:n:medium}
n_{\Omega}(y) = n_{D}(x),  \quad y = x+z, \quad x \in \mathbb{R}^3.
\end{equation}
The scattering of the inhomogeneous elastic medium $D$ by a plane incident wave is to find $u^s(D,d,p;x) \in H^{1}_{loc}(\mathbb{R}^3)$ with the Kupradze radiation conditions,  such that
\begin{align}
&\mu \Delta u(D,d,p;x) +(\lambda + \mu) \grad \Div u(D,d,p;x) + \omega^2  (1-n_{D}) u(D,d,p;x)  = 0, \quad x \in \mathbb{R}^3, \notag \\
& u(D,d,p;x)  = u^s(D,d,p;x) + u^i(x,d,p) = 0, \quad x \in \mathbb{R}^3.  \label{eq:scat:medium:plane}
\end{align}
$u^i(x,d,p)$ as in \eqref{eq:scat:pec:plane} and \eqref{eq:scat:medium:plane} is the elastic plane wave with the polarization $p \in \mathbb{R}^3$ and the travelling direction $d \in \mathbb{R}^3$, i.e., $u^{i}(x,d,p) =  u_{p}^{i}(x,d,p) + u_{s}^{i}(x,d,p)$,
\begin{subequations}\label{eq:plane}
\begin{align}
\quad u_{p}^{i}(x,d,p):&= -\frac{1}{\omega^2}\grad_{x} (\Div_{x}[ p e^{ik_p d \cdot x}]) =
\frac{k_{p}^2}{\omega^2} d\cdot p d e^{ik_px\cdot d}, \\
u_{s}^i(x,d,p):&= \frac{1}{\omega^2} \curl_x \curl_x[pe^{ik_sd\cdot x}]=  \frac{k_{s}^2}{\omega^2} (d\times p) \times d e^{ik_sx\cdot d}.
\end{align}
\end{subequations}

Moreover, the following asymptotic relations between the elastic point incident wave \eqref{eq:point} and elastic plane incident waves \eqref{eq:plane} are useful for our subsequent discussion. Denote $\hat{z} = z/|z|$ for $z\neq 0$. Here and also in what follows, we use $\mO(\cdot)$ or $\smallO(\cdot)$ to represent the usual asymptotic behavior of real scalar variables.
\begin{lemma}\label{lem:incident}
As $|z| \rightarrow \infty$, for large $\lambda \gg \mu$, we have
\begin{align}
 u_{\omega,p,ep}^{i}(x+z,y) &= \frac{e^{ik_p|z|-ik_p\hat{z} \cdot y}}{4 \pi |z|}u_{p}^{i}(x,\hat{z},p)+
\frac{e^{ik_s|z|-ik_s\hat{z} \cdot y}}{4 \pi |z|}u_{s}^{i}(x,\hat{z},p) + \mO(\frac{1}{|z|^2})\frac{k_p^2 + k_s^2}{\omega^2} \label{eq:asym:point1}\\
|u_{p}^{i}(x,\hat{z},p)|& = \mO(\frac{1}{\lambda}), \quad |u_{s}^{i}(x,\hat{z},p)| = \mO(\frac{1}{\mu}). \label{eq:asym:plane:incident}
\end{align}
\end{lemma}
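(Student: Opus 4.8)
The plan is to start from the curl--curl/grad--div representation \eqref{eq:point} of the fundamental tensor, which already splits $\Gamma$ into a purely shear part built from the scalar Helmholtz kernel $g_{k_s}(X,y):=e^{ik_s|X-y|}/(4\pi|X-y|)$ and a purely pressure part built from $g_{k_p}(X,y)$. Evaluating at $X=x+z$ and observing that $z$ is a constant shift, the spatial operators commute with the translation, so $\curl_{x+z}\curl_{x+z}=\curl_x\curl_x$ and $\grad_{x+z}\Div_{x+z}=\grad_x\Div_x$ act only on the $x$-dependence of the shifted kernels $g_k(x+z,y)$. Thus it suffices to expand each scalar kernel as $|z|\to\infty$ and then apply the second-order operators.

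First I would expand the phase and amplitude. Writing $w=x-y$ and using $|z+w|=|z|+\hat z\cdot w+\mO(1/|z|)$ together with $1/|z+w|=|z|^{-1}(1+\mO(1/|z|))$ gives
\[
g_k(x+z,y)=\frac{e^{ik|z|-ik\hat z\cdot y}}{4\pi|z|}\,e^{ik\hat z\cdot x}\bigl(1+\mO(1/|z|)\bigr).
\]
The scalar prefactor $e^{ik|z|-ik\hat z\cdot y}/(4\pi|z|)$ is independent of $x$ and therefore passes through $\curl_x\curl_x$ and $\grad_x\Div_x$ unchanged; the leading $x$-profile is exactly the plane-wave phase $e^{ik\hat z\cdot x}$.

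Next I would identify the leading terms with the plane waves of \eqref{eq:plane}. Comparing the definitions directly yields $\frac1{\omega^2}\curl_x\curl_x[p\,e^{ik_s\hat z\cdot x}]=u_s^i(x,\hat z,p)$ and $-\frac1{\omega^2}\grad_x\Div_x[p\,e^{ik_p\hat z\cdot x}]=u_p^i(x,\hat z,p)$, so the shear and pressure parts of $\Gamma(x+z,y)p$ reproduce precisely the two displayed terms in \eqref{eq:asym:point1} with prefactors $e^{ik_s|z|-ik_s\hat z\cdot y}/(4\pi|z|)$ and $e^{ik_p|z|-ik_p\hat z\cdot y}/(4\pi|z|)$. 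The second-order operators each bring a factor $k_s^2$ or $k_p^2$; after dividing by $\omega^2$, the subleading $\mO(1/|z|)$ corrections of the kernel contribute an overall remainder of size $\mO(1/|z|^2)(k_p^2+k_s^2)/\omega^2$, which is the claimed error. I expect the delicate step to be this last bookkeeping: one must check that, after the two $x$-derivatives act on the $\mO(1/|z|)$ amplitude and phase corrections (which themselves depend on $x$ through $w$), every surviving term is genuinely one power of $1/|z|$ smaller than the leading plane waves and carries the stated $k^2/\omega^2$ scaling, with no cross term escaping the bound.

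Finally, the magnitude estimates \eqref{eq:asym:plane:incident} follow by substituting $k_s^2/\omega^2=1/\mu$ and $k_p^2/\omega^2=1/(2\mu+\lambda)$ into the closed forms $u_p^i(x,\hat z,p)=\tfrac{k_p^2}{\omega^2}(\hat z\cdot p)\hat z\,e^{ik_p\hat z\cdot x}$ and $u_s^i(x,\hat z,p)=\tfrac{k_s^2}{\omega^2}(\hat z\times p)\times\hat z\,e^{ik_s\hat z\cdot x}$. Since $|\hat z|=1$ forces $|(\hat z\cdot p)\hat z|\le|p|$ and $|(\hat z\times p)\times\hat z|\le|p|$, and the exponentials have unit modulus, one gets $|u_p^i(x,\hat z,p)|\le|p|/(2\mu+\lambda)=\mO(1/\lambda)$ in the regime $\lambda\gg\mu$, and $|u_s^i(x,\hat z,p)|\le|p|/\mu=\mO(1/\mu)$.
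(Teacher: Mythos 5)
Your proposal is correct and follows essentially the same route as the paper: both start from the curl--curl/grad--div splitting \eqref{eq:point}, expand the shifted scalar kernels for large $|z|$ to extract the spherical prefactors $e^{ik|z|-ik\hat z\cdot y}/(4\pi|z|)$ times the plane-wave profiles of \eqref{eq:plane}, and read off \eqref{eq:asym:plane:incident} from $k_s^2/\omega^2=1/\mu$ and $k_p^2/\omega^2=1/(2\mu+\lambda)$. The error-term bookkeeping you flag as delicate is handled in the paper only by the phrase ``by direct calculation,'' so your argument is at the same level of detail.
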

\begin{proof}
First, we have
\begin{align}
\Gamma(x+z,y)p  &= \frac{1}{\omega^2}\curl_{x} \curl_{x} \left[\frac{e^{ik_{s}|x+z-y|}}{4 \pi |x+z-y|}p\right] - \frac{1}{\omega^2} \grad_{x} \Div_{x} \left[\frac{e^{ik_{p}|x+z-y|}}{4 \pi |x+z-y|}p\right] \notag \\
&=\frac{1}{\omega^2}\curl_{x} \curl_{x} \left[\frac{e^{ik_{s}|x-(y-z)|}}{4 \pi |x-(y-z)|}p\right] - \frac{1}{\omega^2} \grad_{x} \Div_{x} \left[\frac{e^{ik_{p}|x-(y-z)|}}{4 \pi |x-(y-z)|}p\right]. \label{point:green:expan}
\end{align}
For the first term of the RHS of \eqref{point:green:expan}, by direct calculation, we have 
\begin{equation}\label{up:first:expan}
\begin{aligned}
&\frac{1}{\omega^2}\curl_{x} \curl_{x} \left[\frac{e^{ik_{s}|x-(y-z)|}}{4 \pi |x-(y-z)|}p\right]  =
\frac{1}{\omega^2}(-\Delta + \nabla \Div )\left[\frac{e^{ik_{s}|x-(y-z)|}}{4 \pi |x-(y-z)|}p\right] \\
&=\frac{k_s^2}{\omega^2} \left[\Phi_{k_s}(x,y-z) \left(p-\frac{z-(y-x)}{|z-(y-x)|}\frac{[z-(y-x)]\cdot p}{|z-(y-x)|}\right) + \mathcal{O}(\frac{1}{|z|^2})\right] \\
& = \frac{k_s^2}{\omega^2} \left[ \frac{e^{ik_s|z|}}{4 \pi |z|}  e^{ik_s \hat{z} \cdot (x-y)}\{p - \hat{z} \hat{z} \cdot p\} + \mathcal{O}(\frac{1}{|z|^2})\right]  \\
& = \frac{k_s^2}{\omega^2} \left[\frac{e^{ik_s|z|-ik_s\hat{z} \cdot y}}{4 \pi |z|}  (\hat{z}\times p) \times \hat{z} e^{ik_sx\cdot \hat{z}}  + \mathcal{O}(\frac{1}{|z|^2})\right].
\end{aligned}
\end{equation}
For the second term of the RHS of \eqref{point:green:expan}, by direct calculation, we can obtain
\begin{equation}\label{up:second:expan}
\begin{aligned}
- \frac{1}{\omega^2} \grad_{x} \Div_{x} \left[\frac{e^{ik_{p}|x-(y-z)|}}{4 \pi |x-(y-z)|}p\right]
& = \frac{1}{\omega^2}(-\Delta_x - \curl_{x} \curl_{x})\left[\frac{e^{ik_{p}|x-(y-z)|}}{4 \pi |x-(y-z)|}p\right]\\
&=\frac{k_{p}^2}{\omega^2} \left[\frac{e^{ik_p|z|-ik_p\hat{z} \cdot y}}{4 \pi |z|}  \hat{z} \hat{z}\cdot p e^{ik_px\cdot \hat{z}}+\mathcal{O}(\frac{1}{|z|^2})\right].
\end{aligned}
\end{equation}
By \eqref{up:first:expan} and \eqref{up:second:expan}, compared with \eqref{eq:plane}, we have \eqref{eq:asym:point1}.
Combining \eqref{eq:point:source}, \eqref{eq:plane} and the assumption that $\lambda \gg \mu$, we finally arrive at \eqref{eq:asym:plane:incident}.
\end{proof}
%
Henceforth, we use $\nu$ to signify the exterior unit normal of the domain concerned.  For a vector $u \in C^{1}(D^c)^3$ (or $u \in C^{1}(\Omega^c)^3$) where $D^c = \mathbb{R}^3 \backslash \bar D$ (or $\Omega^c = \mathbb{R}^3 \backslash \bar \Omega$), we define for $x \in \partial D$ (or $x \in \partial \Omega$ )
 \begin{equation}\label{eq:traction:elastic}
 [\mathcal{P}u](x): = (\alpha + \mu)(\nu\cdot \grad)u + \beta \nu \Div u + \alpha [\nu  \times \curl u(x)],
 \end{equation}
which is the traction vector at $x$, with $\alpha$ and $\beta$ defined as follows
\begin{equation}\label{eq:alpha:beta}
\alpha :=\frac{\mu(\lambda + \mu)}{\lambda+3\mu}, \quad \beta: = \frac{(\lambda+\mu)(\lambda+2\mu)}{\lambda+3 \mu}.
\end{equation}
For $y \in \partial D$, $x\in \mathbb{R}^3$, $x \neq y$, we define $\Pi(x,y) \in \mathbb{C}^{3\times3}$ by
\[
\Pi(x,y)^{T}P := \mathcal{P}_{y}(\Gamma(x,y)P),
\]
where the superscript $T$ denotes the transpose.
For $\phi(x) \in C(\partial D)^3$, we introduce the following single and double layers,
\[
(S_{\omega, D} \phi)(x) : = 2 \int_{\partial D} \Gamma(x,y) \phi(y)ds(y), \quad (K_{\omega, D}\phi)(x): = 2 \int_{\partial D} \Pi(x,y) \phi(y)ds(y).
\]
It is well-known \cite{PH} that  $\Pi(x,y)$ is weakly singular with specifically chosen $\alpha$, $\beta$ in \eqref{eq:alpha:beta}, and $K_{\omega, D}: C(\partial D)^3 \rightarrow C^{0, \alpha}(\partial D)^3$ is bounded.
For $\varphi(x) \in C(D)^3$, we introduce the volume potential
\[
(V_{\omega, D}\varphi)(x): = \int_{D}\Gamma(x,y)\varphi(y)dy.
\]
 In the sequel, we study the translation relation of the scattered field of $\Omega$ due to an incident point source $u_{\omega,p,ep}^i$ and the scattered field of the translated $D$ due to an incident plane wave $u^i(x,d,p)$. For expositional simplification and by normalization, we assume in the following that $\rho\equiv 1$ for the background space $\mathbb{R}^3\backslash \overline{\Omega}$.

\subsection{Elastic wave scattering of rigid obstacle}

In fact, for the scattered elastic waves of system \eqref{eq:scat:pec} and \eqref{eq:scat:pec:plane}, we have the following asymptotic relation under the translation condition \eqref{eq:trasi:domain}. For clarity, we assume that the point source in \eqref{eq:point:source} is located in $y_0$ instead of $y$ hereafter.
 \begin{theorem}\label{thm:pec}
For a fixed $\omega \in \mathbb{R}_{+}$, we have the following asymptotic expansions for the rigid obstacle scattering problem \eqref{eq:scat:pec},
 \begin{align}\label{eq:rr}
 u_{\omega,\Omega}^s(x)
 & =\frac{e^{ik_p|z|-ik_p\hat{z} \cdot y_0}}{4 \pi |z|}u_{p}^s(D,\hat{z},p;x-z)+ \frac{e^{ik_s|z|-ik_s\hat{z} \cdot y_0}}{4 \pi |z|}u_{s}^s(D,\hat{z},p;x-z)+ \mathcal{O}(|z|^{-2}),
 \end{align}
 for any fixed $x \in \Omega^c$ as $|z| \rightarrow \infty$ uniformly for all $\hat{z} \in \mathbb{S}^2$.
In \eqref{eq:rr}, $u_{p}^s(D,\hat{z},p;x)$ and $u_{s}^s(D,\hat{z},p;x)$ are the scattered fields of the rigid obstacle $D$ as in \eqref{eq:scat:pec:plane} corresponding to the incident plane waves
  $u_{p}^i(x,\hat{z},p)$ and $u_{s}^i(x,\hat{z},p)$ respectively.
 \end{theorem}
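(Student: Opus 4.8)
The plan is to reduce the point-source scattering by the shifted scatterer $\Omega=D+z$ to plane-wave scattering by the fixed dictionary domain $D$, using translation invariance of the Navier operator, the incident-field expansion of Lemma~\ref{lem:incident}, and linearity of the direct problem. First I would set $v(x):=u_{\omega,\Omega}^s(x+z)$ for $x\in D^c$. Since $\Omega=D+z$, the shift $x\mapsto x+z$ maps $D^c$ onto $\Omega^c$ and $\partial D$ onto $\partial\Omega$; because the operator $\mu\Delta+(\lambda+\mu)\grad\Div+\omega^2$ has constant coefficients and the Kupradze radiation conditions are invariant under rigid translation, $v$ solves the same homogeneous system in $D^c$ and satisfies the same radiation conditions. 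On $\partial D$ the boundary condition in \eqref{eq:scat:pec} becomes $v(x)=-u_{\omega,p,ep}^i(x+z,y_0)$, so $v$ is exactly the rigid scattered field of $D$ generated by the incident field $w_z(x):=u_{\omega,p,ep}^i(x+z,y_0)$.

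Next I would feed the expansion of Lemma~\ref{lem:incident} into this reduced problem. On the bounded set $\overline D$ it gives, uniformly in $\hat z\in\mathbb{S}^2$,
\[
w_z=\frac{e^{ik_p|z|-ik_p\hat z\cdot y_0}}{4\pi|z|}\,u_p^i(\cdot,\hat z,p)+\frac{e^{ik_s|z|-ik_s\hat z\cdot y_0}}{4\pi|z|}\,u_s^i(\cdot,\hat z,p)+R_z,\qquad R_z=\mO(|z|^{-2}).
\]
Since the map sending an incident field (through its Dirichlet trace on $\partial D$) to the associated rigid scattered field is linear, the superposition principle splits $v$ into the same combination of $u_p^s(D,\hat z,p;\cdot)$ and $u_s^s(D,\hat z,p;\cdot)$ --- the scattered fields of \eqref{eq:scat:pec:plane} driven by $u_p^i(\cdot,\hat z,p)$ and $u_s^i(\cdot,\hat z,p)$ --- plus the scattered field $r_z$ produced by $R_z$. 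Undoing the shift through $u_{\omega,\Omega}^s(x)=v(x-z)$ then yields precisely the two explicit terms of \eqref{eq:rr}, leaving only the remainder $r_z(x-z)$ to be estimated.

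The one genuinely analytic step --- and the main obstacle --- is controlling $r_z$ uniformly in $\hat z$. Here I would invoke the well-posedness of \eqref{eq:scat:pec}: by the single/double-layer representation introduced above (or the associated variational formulation), the solution operator is bounded from the data space $C(\partial D)^3$ (resp.\ $H^{1/2}(\partial D)^3$) into $H^1_{\mathrm{loc}}(D^c)^3$ with a bound depending only on $\omega,\lambda,\mu$ and $D$, hence \emph{independent of $z$ and $\hat z$}. Consequently the order-$|z|^{-2}$ bound on $R_z$, which is uniform in $\hat z$ by Lemma~\ref{lem:incident}, transfers to $\|r_z\|=\mO(|z|^{-2})$ on any fixed neighborhood of $\partial D$, and interior elliptic estimates together with the radiation decay of $r_z$ propagate this to the pointwise bound $r_z(x-z)=\mO(|z|^{-2})$ at the fixed $x\in\Omega^c$. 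Uniformity over $\mathbb{S}^2$ follows because neither the solution operator nor the constants in these estimates depend on $\hat z$, while the explicit coefficients and scattered fields depend continuously on $\hat z$ on the compact sphere. Collecting the three observations gives \eqref{eq:rr}.
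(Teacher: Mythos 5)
Your proposal is correct and follows essentially the same route as the paper's proof: translate to reduce the problem to scattering by the fixed domain $D$, insert the incident-field expansion of Lemma~\ref{lem:incident}, and use linearity together with the $z$- and $\hat z$-independent boundedness of the solution operator to carry the $\mO(|z|^{-2})$ remainder through to the scattered field. The only difference is presentational: the paper realizes the solution operator concretely as $T_{\omega,D}=[K_{\omega,D}+iS_{\omega,D}](I+K_{\omega,D}+iS_{\omega,D})^{-1}$ via the combined layer-potential ansatz and the change of variables in the boundary integral equation, whereas you invoke well-posedness of \eqref{eq:scat:pec:plane} abstractly.
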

 \begin{proof}
 By \cite{PH}, $ u_{\omega,\Omega}^s(x) $ can be represented as the following combined potentials,
 \begin{equation}
 u_{\omega,\Omega}^s(x) =  2\int_{\partial \Omega}\Pi(x,y) \phi(y)ds(y) + 2i  \int_{\partial \Omega} \Gamma(x,y) \phi(y)ds(y),
 \end{equation}
 where $\phi(y)$ is a vectorial density on $\partial \Omega$.
 Here and also in what follows, we denote
 \[
 \Phi_{\kappa}(x,y) = e^{i \kappa  |x-y|}/({4 \pi|x-y|}), \quad \text{with} \ \kappa = k_s \ \text{or} \ \kappa = k_p.
 \]
By direct calculations, one can verify that for any $f(y) \in \mathbb{C}^3$,
  \begin{align*}
  \Delta_{x}[f(y)\Phi_{\kappa}(x,y)] &= \Delta_{x}\Phi_{\kappa}(x,y) f(y), \\
  \curl_{x} \curl_{x} [f(y)\Phi_{\kappa}(x,y)] & = (-\Delta_{x} + \nabla_{x} \Div_{x})[f(y)\Phi_{\kappa}(x,y)].
  \end{align*}
  It can be shown that $ \Phi_{\kappa}(x+z,y) = \Phi_{\kappa}(x,y-z)$ and
  \[
  \quad \frac{\partial \Phi_{\kappa}(\tilde x,y)}{\partial \tilde x_{j}}|_{\tilde{x} = x+z} =   \frac{\partial \Phi_{\kappa}( x,y-z)}{\partial x_{j}},\quad   \frac{\partial^2 \Phi_{\kappa}(\tilde x,y)}{\partial \tilde x_{j}\partial \tilde x_{i}}|_{\tilde{x} = x+z} =   \frac{\partial^2 \Phi_{\kappa}( x,y-z)}{\partial x_{j}\partial x_{j}}.
  \]
  Thus we have
  \begin{equation}\label{eq:curl:trans}
  \begin{aligned}
  \grad_{x}\Div_{x}[f(y)\Phi_{\kappa}(x,y)](x+z) &=   \grad_{x}\Div_{x}[f(y)\Phi_{\kappa}(x,y-z)], \\
  \curl_{x} \curl_{x} [f(y)\Phi_{\kappa}(x,y)](x+z) & = \curl_{x} \curl_{x} [f(y)\Phi_{\kappa}(x,y-z)] .
  \end{aligned}
\end{equation}

We are in a position to prove the theorem. First, considering $ u_{\omega,\Omega}^s(x+z)$ with $x\in \Omega^c$, we
  have
  \begin{align*}
  \curl_{x}[f(y)\Phi_{\kappa}(x,y)] &= \nabla_{x}\Phi_{\kappa}(x,y)\times f(y), \\
  \curl_{x} \curl_{x} [f(y)\Phi_{\kappa}(x,y)] & = (-\Delta_{x} + \nabla_{x} \Div_{x})[f(y)\Phi_{\kappa}(x,y)].
  \end{align*}
It can be shown that $ \Phi_{\kappa}(x+z,y) = \Phi_{\kappa}(x,y-z)$ and
  \[
  \quad \frac{\partial \Phi_{\kappa}(\tilde x,y)}{\partial \tilde x_{j}}|_{\tilde{x} = x+z} =   \frac{\partial \Phi_{\kappa}( x,y-z)}{\partial x_{j}},\quad   \frac{\partial^2 \Phi_{\kappa}(\tilde x,y)}{\partial \tilde x_{j}\partial \tilde x_{i}}|_{\tilde{x} = x+z} =   \frac{\partial^2 \Phi_{\kappa}( x,y-z)}{\partial x_{j}\partial x_{j}}.
  \]
Hence, we have
  \begin{equation}\label{eq:curlcurl:trans}
  \begin{aligned}
  \curl_{x}[f(y)\Phi_{\kappa}(x,y)](x+z) &=   \curl_{x}[f(y)\Phi_{\kappa}(x,y-z)], \\
  \curl_{x} \curl_{x} [f(y)\Phi_{\kappa}(x,y)](x+z) & = \curl_{x} \curl_{x} [f(y)\Phi_{\kappa}(x,y-z)] .
  \end{aligned}
\end{equation}
     By the jump relation of the vector potentials of $K_{\omega, \Omega}$ and $S_{\omega, \Omega}$ \cite{PH, PHhab}, we have
  \begin{equation}\label{eq:density:eq:Omega}
  \phi(x) + (K_{\omega,\Omega}\phi)(x) + i (S_{\omega,\Omega} \phi)(x) = -  u_{\omega, p,ep}^i(x,y_0), \quad x \in \partial \Omega, \quad y_0 \notin  \partial \bar \Omega
  \end{equation}
Then we shall write $  u_{\omega,\Omega}^s(x+z)$ by \eqref{eq:curl:trans} and \eqref{eq:curlcurl:trans} as follows,
  \begin{align}
  u_{\omega,\Omega}^s(x+z)
                           & =  2\int_{\partial \Omega}\Pi(x+z,y) \phi(y)ds(y) + 2i  \int_{\partial \Omega} \Gamma(x+z,y) \phi(y)ds(y),
     \label{eq:E:repre:Omega}
  \end{align}
   By changing variable with $y = z+t$ in \eqref{eq:E:repre:Omega} and the assumption \eqref{eq:trasi:domain}, we have
   \begin{align}
     u_{\omega,\Omega}^s(x+z)
      &=  2\int_{\partial D}\Pi(x,t) \phi(z+t)ds(t) + 2i  \int_{\partial D} \Gamma(x,t) \phi(z+t)ds(t), \notag \\
      & = [(K_{\omega,D}\phi(t+z))(x)+ i(S_{\omega,D}\phi(t+z))(x)].\label{eq:E:representaion:D}
   \end{align}
   In the sequel, we turn to solving the density $\phi(z+t)$ with $t \in \partial D$.
   Letting $x = t+z$ in \eqref{eq:density:eq:Omega} and denoting $\tilde \phi(y)|_{y\in\partial D} = \phi(t+z)|_{t \in \partial D}$, again by the change of variables and similar arguments as before, we have
   \begin{equation}\label{eq:density:sa}
     [(I + K_{\omega,D} + iS_{\omega,D})\tilde \phi](t+z)|_{\partial D} = - u_{\omega,p,ep}^i(t+z, y_0), \quad t \in \partial D.
   \end{equation}
   By \eqref{eq:density:sa}, we have
   \begin{equation}\label{eq:a:trans:D:a}
   \phi(t+z) = \tilde \phi(y) = (I + K_{\omega,D} + i S_{\omega,D})^{-1}(- u_{\omega,p, ep}^i(\cdot+z, y_0)), \quad t \in \partial D.
   \end{equation}
   Substituting $\phi(t+z)$ in \eqref{eq:a:trans:D:a} into \eqref{eq:E:representaion:D}, together with Lemma \ref{lem:incident}, we have for any $x \in \Omega^c$
   \begin{align}
     u_{\omega,\Omega}^s(x+z) &= [K_{\omega,D} + iS_{\omega,D}]  (I + K_{\omega,D} + i S_{\omega,D})^{-1}[-  u_{\omega,p,ep}^i(\cdot+z,y_0)], \notag \\
     &= \frac{e^{ik_p|z|-ik_p\hat{z} \cdot y_0}}{4 \pi |z|}  [K_{\omega,D} + iS_{\omega,D}]  (I + K_{\omega,D} + i S_{\omega,D})^{-1}[- u_{p}^{i}(t,\hat{z},p)+ \mathcal{O}(|z|^{-1})], \notag \\
     & \quad +\frac{e^{ik_s|z|-ik_s\hat{z} \cdot y_0}}{4 \pi |z|}  [K_{\omega,D} + iS_{\omega,D}]  (I + K_{\omega,D} + i S_{\omega,D})^{-1}[- u_{s}^{i}(t,\hat{z},p)+ \mathcal{O}(|z|^{-1})], \notag  \\
     &= \frac{e^{ik_p|z|-ik_p\hat{z} \cdot y_0}}{4 \pi |z|}u_{p}^s(D,\hat{z},p;x) + \frac{e^{ik_s|z|-ik_s\hat{z} \cdot y_0}}{4 \pi |z|}u_{s}^s(D,\hat{z},p;x)+ \mathcal{O}(|z|^{-2}). \label{eq:asym:rigid:twp}
   \end{align}
In \eqref{eq:asym:rigid:twp}, $u_{p}^s(D,\hat{z},p;x)$ and $u_{s}^s(D,\hat{z},p;x)$ are the scattering fields corresponding to the incident plane elastic wave $u_{p}^i(x, \hat{z}, p)$ and $u_{s}^i(x,\hat{z},p)$ respectively, i.e., $u_{p}^s(D,\hat{z},p;x)$ $= T_{\omega,D}(- u_{p}^{i}(t,\hat{z},p))(x)$ and $u_{s}^s(D,\hat{z},p;x)$ $= T_{\omega,D}(-u_{s}^{i}(t,\hat{z},p))(x)$ with $T_{\omega,D}$  defined as follows,
    \[
    T_{\omega,D} : C(\partial D)^3 \rightarrow C(\partial D)^3, \quad T_{\omega,D} = [K_{\omega,D} + iS_{\omega,D}]  (I + K_{\omega,D} + i S_{\omega,D})^{-1}.
    \]
Therefore, by change of variables, we have
   \begin{equation}
   u_{\omega,\Omega}^s(x) =  \frac{e^{ik_p|z|-ik_p\hat{z} \cdot y_0}}{4 \pi |z|}u_{p}^s(D,\hat{z},p;x-z) + \frac{e^{ik_s|z|-ik_s\hat{z} \cdot y_0}}{4 \pi |z|}u_{s}^s(D,\hat{z},p;x-z)+ \mathcal{O}(|z|^{-2}). \label{eq:asym:rigid:twoo}
   \end{equation}
   which completes the proof.

 \end{proof}



\subsection{Elastic scattering of inhomogeneous medium}

For the scattered elastic waves of system \eqref{eq:scat:medium} and \eqref{eq:scat:medium:plane} of the penetrable medium scattering, we have the following asymptotic relation under the translation condition \eqref{eq:trasi:domain}.
  \begin{theorem}\label{thm:medium}
For fixed $k_s, k_p \in \mathbb{R}_{+}$, we have the following asymptotic expansion for the elastic medium scattering problem \eqref{eq:scat:medium},
  \begin{align}\label{eq:rrr}
 u_{\omega,\Omega}^s(x) & = \frac{e^{ik_p|z|-ik_p\hat{z} \cdot y_0}}{4 \pi |z|}u_{p}^s(D,\hat{z},p;x-z) + \frac{e^{ik_s|z|-ik_s\hat{z} \cdot y_0}}{4 \pi |z|}u_{s}^s(D,\hat{z},p;x-z)+ \mathcal{O}(|z|^{-2}),
 \end{align}
 for any fixed $x \in \mathbb{R}^3$ as $|z| \rightarrow \infty$ uniformly for all $\hat{z} \in \mathbb{S}^2$. In \eqref{eq:rrr}, $u_{p}^s(D,\hat{z},p;x)$ and $u_{s}^s(D,\hat{z},p;x)$ are the scattered fields of the penetrable medium $D$ as in \eqref{eq:scat:medium:plane} corresponding to the incident plane
 elastic waves
  $u_{p}^i(x, \hat{z}, p)$ and $u_{s}^i(x, \hat{z},p)$ respectively.
 \end{theorem}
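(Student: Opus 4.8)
The plan is to mirror the boundary-integral argument used for Theorem~\ref{thm:pec}, replacing the combined single/double layer representation by the volume (Lippmann--Schwinger) potential $V_{\omega,\Omega}$, since the penetrable medium problem \eqref{eq:scat:medium} is most naturally recast as a volume integral equation. First I would observe that, subtracting the Navier equation satisfied by the incident field from \eqref{eq:scat:medium}, the scattered field solves $\mu\Delta u^s_{\omega,\Omega}+(\lambda+\mu)\grad\Div u^s_{\omega,\Omega}+\omega^2 u^s_{\omega,\Omega}=\omega^2 n_\Omega u_{\omega,\Omega}$ away from the source; convolving against the fundamental solution $\Gamma$, the total field then satisfies the Lippmann--Schwinger equation
\begin{equation*}
u_{\omega,\Omega}(x)=u^i_{\omega,p,ep}(x,y_0)-\omega^2\big(V_{\omega,\Omega}(n_\Omega u_{\omega,\Omega})\big)(x),\qquad x\in\mathbb{R}^3,
\end{equation*}
with scattered field $u^s_{\omega,\Omega}=-\omega^2 V_{\omega,\Omega}(n_\Omega u_{\omega,\Omega})$. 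The well-posedness cited after \eqref{eq:scat:medium} guarantees that $I+\omega^2 V_{\omega,\Omega}(n_\Omega\,\cdot)$ is boundedly invertible.

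Next I would exploit the translation covariance of the volume potential. Using $\Phi_\kappa(x+z,y)=\Phi_\kappa(x,y-z)$ and the derivative identities already established in the proof of Theorem~\ref{thm:pec} (so that $\grad_x\Div_x$ and $\curl_x\curl_x$ commute with the shift), together with the change of variable $y=t+z$ and the medium translation relation \eqref{eq:n:medium}, namely $n_\Omega(t+z)=n_D(t)$, the weight $n_\Omega$ over $\Omega$ is converted into $n_D$ over $D$ and $V_{\omega,\Omega}$ is carried to $V_{\omega,D}$. Setting $\tilde u(t):=u_{\omega,\Omega}(t+z)$, this shows that $\tilde u$ solves the \emph{same} Lippmann--Schwinger equation posed on $D$ but driven by the shifted incident field $u^i_{\omega,p,ep}(\cdot+z,y_0)$, whence
\begin{equation*}
u^s_{\omega,\Omega}(x+z)=-\omega^2\big(V_{\omega,D}\,n_D\,(I+\omega^2 V_{\omega,D}n_D)^{-1}u^i_{\omega,p,ep}(\cdot+z,y_0)\big)(x).
\end{equation*}
The crucial point here is that the resulting solution operator on $D$ is \emph{independent of} $z$, hence uniformly bounded as $|z|\to\infty$.

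I would then insert the asymptotic splitting of the incident point source from Lemma~\ref{lem:incident}, which expresses $u^i_{\omega,p,ep}(\cdot+z,y_0)$ as the $p$-wave term $\tfrac{e^{ik_p|z|-ik_p\hat z\cdot y_0}}{4\pi|z|}u^i_p(\cdot,\hat z,p)$ plus the $s$-wave term $\tfrac{e^{ik_s|z|-ik_s\hat z\cdot y_0}}{4\pi|z|}u^i_s(\cdot,\hat z,p)$ plus a remainder of order $\mathcal{O}(|z|^{-2})$. By linearity of the $z$-independent bounded solution operator, the scattered field splits accordingly, and applying the operator to the remainder leaves an $\mathcal{O}(|z|^{-2})$ term. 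The two leading contributions are, by definition, $u^s_p(D,\hat z,p;x)$ and $u^s_s(D,\hat z,p;x)$, the medium scattered fields of $D$ for the plane waves $u^i_p$ and $u^i_s$ as in \eqref{eq:scat:medium:plane}. A final change of variable $x\mapsto x-z$ then yields \eqref{eq:rrr}, uniformly in $\hat z\in\mathbb{S}^2$.

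The main obstacle I anticipate is quantitative control of the remainder through the volume equation rather than the mere pointwise statement of Lemma~\ref{lem:incident}: one must verify that the $\mathcal{O}(|z|^{-2})$ estimate holds in the function space on which $(I+\omega^2 V_{\omega,D}n_D)^{-1}$ acts (e.g. $L^2(D)^3$ or $C(\bar D)^3$), uniformly in $\hat z$, and that the shifted fundamental-solution remainders $\mathcal{O}(|z|^{-2})$ appearing in \eqref{up:first:expan}--\eqref{up:second:expan} remain integrable over the fixed domain $D$. Because the solution operator is $z$-independent and bounded, once the incident-field expansion is secured in the correct norm the propagation of errors is routine; the genuinely new ingredient relative to Theorem~\ref{thm:pec} is simply the replacement of the boundary density equation by the volume equation and the use of \eqref{eq:n:medium} in place of the boundary translation $\partial\Omega=\partial D+z$.
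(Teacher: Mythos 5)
Your proposal is correct and follows essentially the same route as the paper's proof: the paper also recasts \eqref{eq:scat:medium} as the volume (Lippmann--Schwinger) integral equation (its operator $\tilde V_{\omega,\Omega}u:=-\omega^2\int_\Omega\Gamma(\cdot,y)n_\Omega(y)u(y)\,dy$ is exactly your $-\omega^2 V_{\omega,\Omega}(n_\Omega\,\cdot)$), translates it to $D$ via the change of variables and \eqref{eq:n:medium}, writes $u^s_{\omega,\Omega}(x+z)=[\tilde V_{\omega,D}(I-\tilde V_{\omega,D})^{-1}u^i_{\omega,p,ep}(\cdot+z,y_0)](x)$ with a $z$-independent solution operator, and then inserts Lemma~\ref{lem:incident} and shifts back. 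The technical caveats you raise about norm-uniform control of the remainders are reasonable but are treated at the same level of rigor in the paper itself.
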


\begin{proof}
By \cite[Lemma 2]{PH0} and \cite[Lemma 5.7 ]{PHhab}, the radiating scattering elastic wave field has the following
integral representation,
\begin{equation}\label{eq:repre:medium}
u_{\omega,\Omega}^{s}=  -\omega^2 \int_{\Omega}\Gamma(x,y)n_{\Omega}(y)u_{\omega,\Omega}(y)dy.
\end{equation}
In the following, we set $-\omega^2 \int_{\Omega}\Gamma(x,y)n_{\Omega}(y)u_{\omega,\Omega}(y)dy:= \tilde{V}_{\omega,\Omega}u_{\omega,\Omega}$.
It is known that the operator $(I -\tilde{V}_{\omega,\Omega} )$ is continuously invertible in $C(\Omega)^3$ \cite{PHhab}. Thus we have
\begin{equation}\label{eq:eq:elec}
u_{\omega,\Omega}  = (I - \tilde{V}_{\omega,\Omega})^{-1}u_{\omega,p,ep}^{i}(\cdot, y_0),\quad x \in \Omega, \quad y_0 \notin \bar \Omega.
\end{equation}
Considering $u_{\omega,\Omega}^s(x+z)$, by \eqref{eq:repre:medium}, it can be written as
\[
u_{\omega,\Omega}^s(x+z)= -\omega^2 \int_{\Omega}\Gamma(x+z,y)n_{\Omega}(y)u_{\omega,\Omega}(y)dy.
\]
Setting $t=y-z$ and denoting $u_{\omega,D}(t+z)|_{t \in D}= u_{\omega,\Omega}(y)|_{y \in \Omega}$, by change of variables and noting that the Jacobian matrix of the change of variables is the identity matrix in $\mathbb{R}^3$, together with \eqref{eq:trasi:domain} and \eqref{eq:n:medium}, we have
\begin{equation*}
u_{\omega,\Omega}^s(x+z)  =  -\omega^2 \int_{D}\Gamma(x,t)n_{D}(t)u_{\omega,D}(t+z)dy.
\end{equation*}
It can be readily verified that
\begin{equation}\label{eq:e:omega:z}
u_{\omega,\Omega}^s(x+z)  =  (\tilde{V}_{\omega,D} u_{\omega,D}(t+z))(x).
\end{equation}
Next, we solve for $u_{\omega,D}(t+z)$. Rewriting \eqref{eq:eq:elec} as $(I - \tilde{V}_{\omega,\Omega})u_{\omega,\Omega}(x)  = u_{\omega,p,ep}^{i}(x,y_0)$, we have
\begin{equation}\label{eq:e:omega:medium}
u_{\omega,p,ep}^{i}(x,y_0)=u_{\omega,\Omega}(x) + \omega^2 \int_{\Omega}\Gamma(x,y)n_{\Omega}(y)u_{\omega, \Omega}(y)dy.
\end{equation}
By change of variables with $x = t+z$, \eqref{eq:e:omega:medium} becomes
\begin{equation}\label{eq:mega:2}
u_{\omega,p,ep}^{i}(t+z,y_0)-u_{\omega,D}(t+z)=\omega^2 \int_{\Omega}\Gamma(t+z,y)n_{\Omega}(y)u_{\omega, \Omega}(y)dy.
\end{equation}
By using change of variables with $\tilde y = y-z$, we see \eqref{eq:mega:2} can be written as
\begin{equation}\label{eq:volume:reso:add}
u_{\omega,p,ep}^{i}(t+z,y_0)-u_{\omega,D}(t+z)=\omega^2 \int_{D}\Gamma(t,\tilde y)n_{D}(\tilde y)u_{\omega,D}(\tilde y +z)d\tilde y.
\end{equation}
With \eqref{eq:volume:reso:add}, it can be shown
\begin{equation*}
u_{\omega,D}(t+z) = (I - \tilde{V}_{\omega,D})^{-1}u_{\omega,p,ep}^{i}(\cdot+z,y_0), \quad t \in D,
\end{equation*}
where $\tilde{V}_{\omega,D}$ is defined as follows,
\begin{equation}\label{eq:volume:p:D}
(\tilde{V}_{\omega,D} u)(x): = -\omega^2\int_{D} \Gamma(x,y) n_{D}(y) u(y)dy, \quad u \in C(D)^3.
\end{equation}
Substituting it into \eqref{eq:e:omega:z}, we have
\begin{equation*}
u_{\omega,\Omega}^s(x+z)  =  [\tilde{V}_{\omega,D} (I - \tilde{V}_{\omega,D})^{-1}u_{\omega,p,ep}^{i}(\cdot+z,y_0)](x).
\end{equation*}
Again by Lemma \ref{lem:incident}, we have
\begin{align}
u_{\omega,\Omega}^s(x+z)  =&  \frac{e^{ik_p|z|-ik_p\hat{z} \cdot y_0}}{4 \pi |z|}[\tilde{V}_{\omega,D} (I - \tilde{V}_{\omega,D})^{-1}u_{p}^{i}(x,\hat{z},p)(x) + \mathcal{O}(|z|^{-1})], \label{eq:aym:me:1} \\
& +\frac{e^{ik_s|z|-ik_s\hat{z} \cdot y_0}}{4 \pi |z|}[\tilde{V}_{\omega,D} (I - \tilde{V}_{\omega,D})^{-1}u_{s}^{i}(x,\hat{z},p)(x) + \mathcal{O}(|z|^{-1})]. \label{eq:aym:me:2}
\end{align}
 Now we introduce $u_{p}^{s}(D, \hat{z},p;x) := V_{\omega,D}(- u_{p}^{i}(t,\hat{z},p))(x)$ and $u_{s}^{s}(D, \hat{z},p;x) := V_{\omega,D}(-u_{s}^{i}(t,\hat{z},p))(x)$ with $V_{\omega,D}$ defined as follows,
    \[
    V_{\omega,D} : C(D)^3 \rightarrow C(D)^3, \quad V_{\omega,D} := \tilde{V}_{\omega,D} (I - \tilde{V}_{\omega,D})^{-1}.
    \]
    In fact, $u_{p}^{s}(D, \hat{z},p;x)$ and $u_{s}^{s}(D, \hat{z},p;x)$ are the scattering fields associated with the incident plane elastic wave $u_{p}^i(x, \hat{z},p)$ and $u_{s}^i(x, \hat{z},p)$, respectively.
Again by change of variables, we have
\begin{equation}\label{eq:medium:syste}
u_{\omega,\Omega}^s(x) = \frac{e^{ik_p|z|-ik_p\hat{z} \cdot y_0}}{4 \pi |z|}u_{p}^s(D,\hat{z},p;{x-z})+\frac{e^{ik_s|z|-ik_s\hat{z} \cdot y_0}}{4 \pi |z|}u_{s}^s(D,\hat{z},p;{x-z})+ \mathcal{O}(|z|^{-2}).
\end{equation}
\end{proof}


\section{Low Frequency Asymptotic Analysis}\label{sec:analysis:lowfre}
In this section, we discuss the low frequency asymptotic approximations of the nearly incompressible material for our algorithmic development.
There are some existing results on low frequency asymptotic analysis for elastic wave scattering \cite{DR}. However,  there are no direct and clear results that could cover the case of the nearly incompressible materials for our purpose, and thus we provide a complete study in what follows. In the sequel, for notational convenience and without loss of generality, we assume that the elastic point source \eqref{eq:point:source} is always located at the origin, i.e.,
$u_{\omega,p,ep}^{i}(x,0)=\Gamma(x,0)p$.

For the low frequency analysis, we first consider the limiting case with $\omega=0$.
We introduce $\Gamma^0(x)$ as in \cite{PH}, which is the fundamental solution of \eqref{eq:elastic:wave} with $\omega=0$, $\rho \equiv 1$
\begin{equation}\label{eq:gamma0}
\Gamma_{ij}^0(x): = \frac{\delta_{jk}}{4 \pi \mu |x|} - \frac{\lambda + \mu}{8\pi \mu(2\mu + \lambda)} \frac{\partial |x|}{\partial x_j \partial x_k},
\end{equation}
and define $\Pi^0(x,y)$ as follows,
\[
\Pi^0(x,y)^{T}d: = \mathcal{P}_y(\Gamma^0(x-y)d).
\]
Then the single and double layers associated with $\Gamma_{i,j}^0(x,y)$ can be defined similarly,
\begin{equation}\label{eq:gamma0:layer}
(S^{0}_{D}\phi)(x): = 2\int_{\partial D} \Gamma^{0}(x,y)\phi(y)ds(y), \quad (K^{0}_{D}\phi)(x): = 2\int_{\partial D} \Pi^{0}(x,y)\phi(y)ds(y).
\end{equation}
In fact, as $\omega \rightarrow 0$, the single layer $S^{0}_{D}$ and the double layer $K^{0}_{D}$ approximate $S_{\omega,D}$ and $K_{\omega,D}$, respectively, as shown in the lemma below.
\begin{lemma}\label{lem:layer:diff}
As the frequency $\omega \rightarrow 0$, we have the following estimates,
\begin{equation}\label{eq:estimates:layers}
||S_{\omega, D} - S_{D}^{0}||_{\mathcal{L}(C(\partial D)^3, C^{0, \alpha}(\partial D)^3)}  \sim \mathcal{O}(\omega), \quad ||K_{\omega, D} - K_{D}^{0}||_{\mathcal{L}(C(\partial D), C^{0, \alpha}(\partial D)^3)}  \sim \mathcal{O}(\omega).
\end{equation}
\end{lemma}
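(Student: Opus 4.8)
The plan is to expand both kernels in powers of $\omega$ by inserting the Taylor series $e^{i\kappa r}=\sum_{n\ge 0}(i\kappa r)^n/n!$ (with $r=|x-y|$ and $\kappa\in\{k_s,k_p\}$, $k_s=\omega/\sqrt\mu$, $k_p=\omega/\sqrt{2\mu+\lambda}$) into the representation \eqref{eq:pointsource:1}–\eqref{eq:point} of $\Gamma$, and to recognize $\Gamma^0$ of \eqref{eq:gamma0} as precisely the $\omega^0$-term of that expansion. Writing $k_s^n-k_p^n=\omega^n a_n$ with $a_n=\mu^{-n/2}-(2\mu+\lambda)^{-n/2}$, the static terms cancel against $\Gamma^0$ by construction, so the difference kernel $G(x,y):=\Gamma(x,y)-\Gamma^0(x-y)$ carries at least one explicit factor of $\omega$. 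First I would record the two lowest surviving contributions explicitly: from the shear part $e^{ik_sr}/(4\pi\mu r)-1/(4\pi\mu r)$ one gets the $\omega$-order constant $ik_s/(4\pi\mu)$ plus an $\mathcal{O}(\omega^2)r$ remainder, and from $\omega^{-2}\grad_x\Div_x[(e^{ik_sr}-e^{ik_pr})/(4\pi r)\,p]$ the $n=1$ term is annihilated (it is constant in $x$), the $n=2$ term reproduces the static second summand of \eqref{eq:gamma0}, and the $n=3$ term gives an $\mathcal{O}(\omega)$ constant because $\grad\Div[r^2 p]=2p$.

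For the single layer I would then argue that $G$ is uniformly $\mathcal{O}(\omega)$ on $\partial D\times\partial D$ and that the cancellation of the $1/r$ poles leaves a kernel whose singularity is no worse than that of $S^0_D$ itself (in fact the leading $\mathcal{O}(\omega)$ part is continuous). Since $\partial D$ is a compact $C^2$ surface, the single-layer integral operator with a weakly singular kernel is bounded $C(\partial D)^3\to C^{0,\alpha}(\partial D)^3$, with operator norm controlled by the Hölder-weighted supremum of the kernel and its tangential behaviour; both are $\mathcal{O}(\omega)$ here, which yields the first estimate in \eqref{eq:estimates:layers}.

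For the double layer I would apply the traction operator $\mathcal{P}_y$ of \eqref{eq:traction:elastic} term by term to the $\omega$-expansion of $G$. The key observation is that the entire $\mathcal{O}(\omega)$-part of $G$ is constant in $(x,y)$ and is therefore annihilated by $\mathcal{P}_y$, so $\Pi-\Pi^0=\mathcal{P}_y G$ is driven by the $\mathcal{O}(\omega^2)$ terms; a fortiori it is $\mathcal{O}(\omega)$. What must be checked is that $\mathcal{P}_y G$ remains weakly singular uniformly in $\omega$: this is exactly the role of the Kupradze choice of $\alpha,\beta$ in \eqref{eq:alpha:beta}, which cancels the would-be strongly singular (degree $-2$) parts of $\mathcal{P}_y\Gamma$ and $\mathcal{P}_y\Gamma^0$ simultaneously, leaving a kernel of the same weakly singular type as $\Pi^0$ but multiplied by $\mathcal{O}(\omega)$. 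With $\partial D\in C^2$ compact, boundedness of $K_{\omega,D}-K^0_D:C(\partial D)^3\to C^{0,\alpha}(\partial D)^3$ with norm $\mathcal{O}(\omega)$ follows as before.

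The main obstacle is this last point: tracking how the first-order operator $\mathcal{P}_y$ raises the singularity order of each expansion term and verifying, order by order in $\omega$, that the same algebraic cancellation that makes $\Pi$ itself weakly singular persists, so that no uncontrolled degree $-2$ kernel appears with an $\mathcal{O}(1)$ coefficient. The remaining, purely technical, step is the tail: using the uniform convergence of the exponential series and the boundedness of $r$ on the compact $\partial D$, I would bound $\sum_{n\ge 3}$ of the differentiated terms by a convergent series times $\omega^2$, so that all higher-order contributions are absorbed into $\mathcal{O}(\omega)$ and both estimates in \eqref{eq:estimates:layers} hold.
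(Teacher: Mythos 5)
Your proposal is correct and follows essentially the same route as the paper: expand $e^{i\kappa r}/(4\pi r)$ in powers of $\omega$, identify $\Gamma^0$ as the static part so that the difference kernel and its first derivatives are $\mathcal{O}(\omega)$ with at most a weak singularity, and then invoke the standard mapping properties of weakly singular kernels on a compact $C^2$ surface to pass from pointwise kernel bounds to the $C(\partial D)^3\to C^{0,\alpha}(\partial D)^3$ operator bounds. Your extra observation that the $\mathcal{O}(\omega)$ part of the difference kernel is constant and hence annihilated by $\mathcal{P}_y$ (so the double-layer difference is in fact $\mathcal{O}(\omega^2)$) is a mild refinement the paper does not exploit, and your worry about the Kupradze cancellation is unnecessary for the difference kernel since the strongly singular parts of $\Pi$ and $\Pi^0$ cancel identically.
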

\begin{proof}
By \cite[Lemma 5.1]{PHhab}, as $\kappa\rightarrow 0$ with $\kappa = k_{p}$ or $\kappa = k_s$, we have
\begin{align*}
\frac{e^{i \kappa x}}{4\pi |x|}  &= \frac{cos(\kappa |x|)}{4 \pi |x|} + \frac{sin(\kappa |x|)}{4 \pi |x|} \\
 & = \frac{1}{4\pi |x|} - \frac{\kappa^2}{8\pi}|x| + \kappa^4|x|^3f_1(\kappa^2|x|^2) + i\kappa f_2(\kappa^2|x|^2),
\end{align*}
where $f_1$ and $f_2$ are entire functions. By direct calculation, as $\omega \rightarrow 0$, we see
\begin{equation}\label{gamma:gamma0:first}
\frac{e^{ik_sx}}{4\pi \mu |x|} - \frac{1}{4 \pi \mu |x|} = \frac{1}{\mu}(- \frac{k_s^2}{8\pi}|x| + k_s^4|x|^3f_1(k_s^2|x|^2) + ik_s f_2(k_s^2|x|^2),
\end{equation}
and by $(k_p^2-k_s^2)/\omega^2 = (\lambda+ \mu)/[\mu(2\mu + \lambda)]$, we also have
\begin{align}
 &\frac{\partial^2}{\partial x_j \partial x_k} \left[ \frac{1}{\omega^2}\frac{e^{ik_sx} -e^{ik_px}}{4 \pi |x|} + \frac{(\lambda+\mu)|x|}{8\pi \mu(2\mu + \lambda)}\right]\\
  = & \omega \frac{\partial^2}{\partial x_j \partial x_k}\left[ |x|^3g_{1}(|x|^2)+ig_{2}(|x|^2) +|x|^3h_{1}(|x|^2)+ih_{2}(|x|^2) \right], \label{gamma:gamma0:second}
\end{align}
where $g_1$, $g_2$, $h_1$ and $h_2$ are entire functions whose coefficients only depend on $\mu$, $\lambda$, $\omega^k$ with $k \in \mathbb{N}$.

Hence, for $\omega \ll 1$, and any constant $c_1$, there exist a constant $c_2$, such that for all $|x| \leq c_1$, by \eqref{gamma:gamma0:first} and the expressions of $\Gamma$ and $\Gamma^0$,  we have
\begin{equation}\label{eq:weak:singular:S}
|\Gamma_{jk}(x)-\Gamma_{jk}^0(x)| \leq c_2 \omega,  \quad j,k=1,2,3.
\end{equation}
By differentiating \eqref{gamma:gamma0:first}, and noting for odd integer $l$ the relation $\partial |x|^{l}/\partial x_m=lx_m|x|^{l-2}$, we have
\begin{equation}\label{eq:weak:singular:K}
|\frac{\partial}{\partial x_{l_1}}(\Gamma_{jk}(x)-\Gamma_{jk}^0(x))| \leq \frac{c_2\omega}{|x|} \Rightarrow |(\Pi(x)^{t}-{\Pi^0}^t(x))e_j| \leq \frac{c_2\omega}{|x|}, \quad \quad l_1, l_2, l_3=1,2,3,
\end{equation}
where $e_j$, $j=1,2,3$, denote the usual Cartesian orthonormal unit basis vector.

By \eqref{eq:weak:singular:S} and \eqref{eq:weak:singular:K} and Chapter 2.3 of \cite{CK0}, since the integral kernel of $S_{\omega, D} - S_{D}^{0}$ and $K_{\omega, D} - K_{D}^{0}$ are weakly singular, we see the mapping properties of them are at least as good as $S_{\omega, D}$ or $K_{\omega, D}$, which are bounded and linear mappings from $C(\partial D)^3$ to $C^{0,\alpha}(\partial D)^3$ . This leads to the desired estimates and completes the proof.
\end{proof}

It is necessary to introduce the far field patterns of any radiating elastic waves $u^{s}(x)$ for convenience \cite{AK, PH},
\[
u^{s}(x) = \frac{e^{ik_{s}|x|}}{|x|}u_{s,\infty}(\hat{x}) + \frac{e^{ik_{p}|x|}}{|x|}u_{p,\infty}(\hat{x})\hat{x} + \mathcal{O}(|x|^{-2}),
\]
for $|x| \rightarrow \infty$ where $\hat{x}\cdot u_{s,\infty}(\hat{x})=0$ for all $\hat{x} \in \mathbb{S}^2$ and $u_{p,\infty}(\hat{x})$ is complex valued scalar function.

With Lemma \ref{lem:layer:diff} and Theorem \ref{thm:pec}, we are now in the position to give the theorem on the low frequency asymptotic approximations of the nearly incompressible materials.
\begin{theorem}\label{thm:obstacle:asym}
Assuming $\Omega$ and $D$ be rigid obstacles with the translation \eqref{eq:trasi:domain} and $\lambda  \gg \mu$, $ \mu = \mO(1)$, $|z| \gg 1$, we
have the following asymptotic estimate of the scattered fields of \eqref{eq:scat:pec} as $|x| \rightarrow \infty$,  $\omega \rightarrow 0$,
\begin{align}
 u_{\omega,\Omega}^s(x)  =& \frac{e^{ik_p|z|}}{4 \pi |z|} \left[ \frac{e^{ik_s|x-z|}}{|x-z|} (\mO(\frac{1}{\lambda})(I-\hat{x}\hat{x}^T)a_0 + \frac{e^{ik_p|x-z|}}{|x-z|}\mO(\frac{1}{\lambda^2}) \hat{x}\hat{x}^{T}b_0 \right] + \mO(\frac{1}{|z|^2}) \label{eq:asym:obs:lambda}\\
 &+\frac{e^{ik_s|z|}}{4 \pi |z|} \left[ \frac{e^{ik_s|x-z|}}{|x-z|} (\mO(1)(I-\hat{x}\hat{x}^T)c_0  + \frac{e^{ik_p|x-z|}}{|x-z|}
 \mO(\frac{1}{\lambda}) \hat{x}\hat{x}^{T}d_0\right] + \mathcal{O}(\frac{1}{|x|^2}) + \mO(\omega). \notag
\end{align}
where $a_0$, $b_0$, $c_0$ and $d_0$ are all constant complex vectors that do not depend on $\omega$ and $\lambda$.
\end{theorem}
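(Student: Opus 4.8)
The plan is to combine the translation relation of Theorem~\ref{thm:pec} with a careful separation of the shear and pressure channels, tracking the powers of $1/\lambda$ carried by each. First I would invoke Theorem~\ref{thm:pec} with the source placed at the origin ($y_0=0$), so that the factors $e^{-ik_p\hat{z}\cdot y_0}$ and $e^{-ik_s\hat{z}\cdot y_0}$ both reduce to $1$ and
\[
u_{\omega,\Omega}^s(x) = \frac{e^{ik_p|z|}}{4\pi|z|}u_{p}^s(D,\hat{z},p;x-z) + \frac{e^{ik_s|z|}}{4\pi|z|}u_{s}^s(D,\hat{z},p;x-z) + \mO(|z|^{-2}).
\]
This reduces the theorem to analysing, for the \emph{fixed} obstacle $D$, the far field (as $|x|\to\infty$) and the $\lambda$-asymptotics (as $\lambda\to\infty$, $\mu=\mO(1)$) of the two plane-wave scattered fields $u_{p}^s$ and $u_{s}^s$; these carry the prefactors $\tfrac{e^{ik_p|z|}}{4\pi|z|}$ and $\tfrac{e^{ik_s|z|}}{4\pi|z|}$ and account for the $\mO(|z|^{-2})$ remainder.

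Next I would use the general far-field representation quoted before the statement to split each of $u_{p}^s$ and $u_{s}^s$ into a transverse (shear) part carrying $\tfrac{e^{ik_s|x-z|}}{|x-z|}$ and projected by $(I-\hat{x}\hat{x}^T)$, and a longitudinal (pressure) part carrying $\tfrac{e^{ik_p|x-z|}}{|x-z|}$ and projected by $\hat{x}\hat{x}^T$; since $\widehat{x-z}\to\hat{x}$ as $|x|\to\infty$, the projections may be replaced by $(I-\hat{x}\hat{x}^T)$ and $\hat{x}\hat{x}^T$ up to the $\mO(|x|^{-2})$ remainder. The crux is to show the four resulting amplitudes scale like $\mO(1/\lambda)$, $\mO(1/\lambda^2)$, $\mO(1)$ and $\mO(1/\lambda)$. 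I would represent each scattered field through the combined potential $u^s=(K_{\omega,D}+iS_{\omega,D})\phi$ with density $\phi=-(I+K_{\omega,D}+iS_{\omega,D})^{-1}u^i$, and isolate two independent sources of $\lambda$-suppression: (i) by Lemma~\ref{lem:incident} the incident data obey $|u_{p}^i|=\mO(1/\lambda)$ while $|u_{s}^i|=\mO(1/\mu)=\mO(1)$; and (ii) the pressure part of the kernel $\Gamma$ carries the factor $k_p^2/\omega^2=1/(2\mu+\lambda)=\mO(1/\lambda)$, whereas its shear part carries $k_s^2/\omega^2=1/\mu=\mO(1)$, so a single-layer density produces a shear far field of order $\|\phi\|$ and a pressure far field of order $\|\phi\|/\lambda$.

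To make (i) rigorous I would show that $(I+K_{\omega,D}+iS_{\omega,D})^{-1}$ is bounded uniformly as $\omega\to0$ and $\lambda\to\infty$: Lemma~\ref{lem:layer:diff} replaces $K_{\omega,D},S_{\omega,D}$ by the static operators $K_D^0,S_D^0$ up to $\mO(\omega)$, while $\Gamma^0$ in \eqref{eq:gamma0} has a finite $\lambda\to\infty$ limit (the coefficient $(\lambda+\mu)/(2\mu+\lambda)\to1$), so the limiting static combined operator $I+K_D^0+iS_D^0$ is invertible and its inverse stays uniformly bounded; consequently the densities satisfy $\phi_p=\mO(1/\lambda)$ and $\phi_s=\mO(1)$. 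Combining with (ii), and noting that the double-layer contribution to the $\tfrac{e^{ik|x-z|}}{|x-z|}$ far-field pattern carries an extra factor $k_s$ or $k_p=\mO(\omega)$ (as for the acoustic double layer) and is thus absorbed into the $\mO(\omega)$ remainder so that the leading far field comes from $iS_{\omega,D}\phi$, I obtain: from $u_{p}^s$ a shear amplitude $\mO(1/\lambda)$ and a pressure amplitude $\mO(1/\lambda^2)$; from $u_{s}^s$ a shear amplitude $\mO(1)$ and a pressure amplitude $\mO(1/\lambda)$. Substituting these into the translation identity and collecting the $\omega$-independent leading vectors as $a_0,b_0,c_0,d_0$ yields \eqref{eq:asym:obs:lambda}, with the three remainders $\mO(|z|^{-2})$, $\mO(|x|^{-2})$, $\mO(\omega)$ coming respectively from Theorem~\ref{thm:pec}, the far-field truncation, and Lemma~\ref{lem:layer:diff}.

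The main obstacle I anticipate is the uniform-in-$\lambda$ control of the solution operator: because $\beta=(\lambda+\mu)(\lambda+2\mu)/(\lambda+3\mu)=\mO(\lambda)$ blows up in the traction operator $\mathcal{P}$, one must verify that the $\beta\,\nu\Div$ term of the double-layer kernel stays bounded (its action on the shear part vanishes, and on the pressure part it meets the compensating $\mO(1/\lambda)$), and that the specific Kupradze choice of $\alpha,\beta$ in \eqref{eq:alpha:beta} keeps $\Pi$ weakly singular with $\lambda$-independent bounds, so that $K_{\omega,D}$ and hence the inverse remain uniformly bounded. Establishing this clean $\lambda$-uniform separation of the shear and pressure channels is the technical heart of the argument; the remaining far-field and low-frequency expansions are routine given Lemmas~\ref{lem:incident} and \ref{lem:layer:diff}.
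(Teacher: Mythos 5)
Your proposal follows essentially the same route as the paper's proof: the translation relation of Theorem~\ref{thm:pec}, the combined layer-potential representation with the low-frequency limit of the density via Lemma~\ref{lem:layer:diff}, and the $\lambda$-scalings obtained by pairing the incident-wave magnitudes $|u_p^i|=\mO(1/\lambda)$, $|u_s^i|=\mO(1/\mu)$ with the $1/(2\pi(2\mu+\lambda))$ versus $1/(2\pi\mu)$ prefactors of the pressure and shear far-field operators. The one place you go beyond the paper is in explicitly flagging the need for uniform-in-$\lambda$ boundedness of $(I+K_{\omega,D}+iS_{\omega,D})^{-1}$ and the $\beta=\mO(\lambda)$ term in the traction kernel; the paper leaves this implicit, so your added care is a genuine (if minor) strengthening rather than a deviation.
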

\begin{proof}
Here we assume that $\partial D$ is $C^2$-continuous such that we use the mapping properties of the layer potentials $K_{\omega, D}$ and $S_{\omega, D}$ in \cite{PH}.
We assume $u^s(D,d, p;x)$ has the following combined layer potential representation \cite{PH} with incident plane wave $u^i(x,d,p)$,
\[
u^s(D,d, p;x) = [(K_{\omega, D} + i S_{\omega, D})\phi](x).
\]
It is well known that $u^s(D,d, p;x)$ has the following asymptotic behavior as $|x|\rightarrow \infty$ \cite{AK},
\begin{equation}\label{eq:scatter:repre:rigi}
 u^s(D,d, p;x)= \frac{e^{ik_s|x|}}{|x|}(\mathcal{F}_{s,\infty}\phi)(\hat{x})+ \frac{e^{ik_p|x|}}{|x|} (\mathcal{F}_{p,\infty}\phi)(\hat{x}) + \mathcal{O}(\frac{1}{|x|^2}),
\end{equation}
where $(\mathcal{F}_{s,\infty}\phi)(\hat{x})$ is the shear far
field pattern that is tangential to $\mathbb{S}^2$, and $(\mathcal{F}_{p,\infty}\phi)(\hat{x})$ is the pressure far field pattern that is normal to $\mathbb{S}^2$.
Beside, they have the the following integral representations \cite{AK},
\begin{align}
(\mathcal{F}_{s,\infty}\phi)(\hat{x})&= \frac{1}{2\pi \mu}\int_{\partial D}\left\{ [I -\hat{x}\hat{x}^{T}]e^{-ik_s\hat{x}\cdot y} + i [\mathcal{P}_{y}(I-\hat{x}\hat{x}^{T})e^{-ik_s\hat{x}\cdot y}]^{T}\right\}\phi(y)ds(y),\label{eq:far:s:asy}\\
(\mathcal{F}_{p,\infty}\phi)(\hat{x})& = \frac{1}{2\pi (2\mu + \lambda)}\int_{\partial D}\left\{ \hat{x}\hat{x}^{T}e^{-ik_p\hat{x}\cdot y} + i [\mathcal{P}_{y}\hat{x}\hat{x}^{T}e^{-ik_p\hat{x}\cdot y}]^{T}\right\}\phi(y)ds(y).  \label{eq:far:p:asy}
\end{align}
By \eqref{eq:plane}, as $\omega \rightarrow 0$, we have $\lim_{\omega \rightarrow 0}u^i(x,d,p) = u^0 = u_{s}^{0} + u_{p}^0$, where
\begin{equation}\label{eq:lowlimit}
u_{s}^{0}=  \lim_{\omega \rightarrow 0} u_{s}^i =  \frac{1}{\mu} (d\times p)\times d,\ \quad u_{p}^0 =  \lim_{\omega \rightarrow 0} u_{p}^i = \frac{1}{2\mu + \lambda}(d\cdot p) d.
\end{equation}

Next we introduce $\phi^{\omega}(x) : = \phi_{p}^{\omega}(x) + \phi_{s}^{\omega}(x)$,
\[
\phi_{p}^{\omega}(x) := (I + K_{\omega, D} + i S_{\omega, D})^{-1}(-u_{p}^{i}(\cdot, d,p)), \ \ \phi_{s}^{\omega}(x) := (I + K_{\omega, D} + i S_{\omega, D})^{-1}(-u_{s}^{i}(\cdot, d,p)).
\]
By Lemma \ref{lem:layer:diff}, it can be shown that as $\omega \rightarrow 0$,
\begin{align}
\phi^{\omega}(x) &= (I + K_{\omega, D} + i S_{\omega, D})^{-1}(-u^{i})  =   [I + K_{D}^{0} + i S_{D}^{0} + \mathcal{O}(\omega)]^{-1}(-u^0 + \mathcal{O}(\omega)) \notag \\
        &= -[I + K_{D}^{0} + i S_{D}^{0}]^{-1}u^0 + \mathcal{O}(\omega). \label{eq:}
\end{align}
Thus we arrive at
\[
\phi^{0}(x): = \lim_{\omega \rightarrow 0}\phi(x) =  [I + K_{D}^{0} + i S_{D}^{0}]^{-1}(-u^0)= \phi_{s}^0+ \phi_{p}^0,
\]
where
\begin{equation}\label{eq:phi:u:rela}
\phi_{s}^0: = [I + K_{D}^{0} + i S_{D}^{0}]^{-1}(-u_{s}^0),\quad \quad \phi_{p}^0: = [I + K_{D}^{0} + i S_{D}^{0}]^{-1}(- u_{p}^0).
\end{equation}
It then follows that
\begin{align*}
u_{p}^s(D,\hat{z},p;x) &= [K_{\omega,D} + iS_{\omega,D}]  (I + K_{\omega,D} + i S_{\omega,D})^{-1}(- u_{s}^{i}(t,\hat{z},p)) \\
& = [K_{\omega,D} + iS_{\omega,D}]\phi_{s}^0 + \mO(\omega) \\
&= \frac{e^{ik_s|x|}}{|x|} (\mathcal{F}_{s,\infty}\phi_{s}^0)(\hat{x}) + \frac{e^{ik_p|x|}}{|x|} (\mathcal{F}_{p,\infty}\phi_{s}^0)(\hat{x})+ \mathcal{O}(\frac{1}{|x|^2}) + \mO(\omega).
\end{align*}
Similarly, we have
\begin{equation}
u_{p}^s(D,\hat{z},p;x) =  \frac{e^{ik_s|x|}}{|x|} (\mathcal{F}_{s,\infty}\phi_{p}^0)(\hat{x}) + \frac{e^{ik_p|x|}}{|x|} (\mathcal{F}_{p,\infty}\phi_{p}^0)(\hat{x})+ \mathcal{O}(\frac{1}{|x|^2}) + \mO(\omega).
\end{equation}
By \eqref{eq:asym:rigid:twp} and Theorem \ref{thm:pec}, it is straightforward to show that
\begin{align*}
 u_{\omega,\Omega}^s(x+z)  =& \frac{e^{ik_p|z|}}{4 \pi |z|} \left[ \frac{e^{ik_s|x|}}{|x|} (\mathcal{F}_{s,\infty}\phi_{p}^0)(\hat{x}) + \frac{e^{ik_p|x|}}{|x|} (\mathcal{F}_{p,\infty}\phi_{p}^0)(\hat{x})\right] + \mathcal{O}(\frac{1}{|x|^2}) \\
 &+\frac{e^{ik_s|z|}}{4 \pi |z|} \left[ \frac{e^{ik_s|x|}}{|x|} (\mathcal{F}_{s,\infty}\phi_{s}^0)(\hat{x}) + \frac{e^{ik_p|x|}}{|x|} (\mathcal{F}_{p,\infty}\phi_{s}^0)(\hat{x})\right] + \mO(\frac{1}{|z|^2}) + \mO(\omega).
\end{align*}
Furthermore, we can show that
\begin{align}
 u_{\omega,\Omega}^s(x)  =& \frac{e^{ik_p|z|}}{4 \pi |z|} \left[ \frac{e^{ik_s|x-z|}}{|x-z|} (\mathcal{F}_{s,\infty}\phi_{p}^0)(\widehat{x-z}) + \frac{e^{ik_p|x-z|}}{|x-z|} (\mathcal{F}_{p,\infty}\phi_{p}^0)(\widehat{x-z})\right] + \mO(\frac{1}{|z|^2}) \label{eq:rigid:fullexpan} \\
 &+\frac{e^{ik_s|z|}}{4 \pi |z|} \left[ \frac{e^{ik_s|x-z|}}{|x-z|} (\mathcal{F}_{s,\infty}\phi_{s}^0)(\widehat{x-z}) + \frac{e^{ik_p|x-z|}}{|x-z|} (\mathcal{F}_{p,\infty}\phi_{s}^0)(\widehat{x-z})\right] + \mathcal{O}(\frac{1}{|x|^2}) + \mO(\omega). \notag
\end{align}
By combining \eqref{eq:far:s:asy}, \eqref{eq:far:p:asy}, \eqref{eq:lowlimit} and \eqref{eq:phi:u:rela}, as $|x| \rightarrow \infty$ and $\omega \rightarrow 0$, we have
\begin{align*}
&(\mathcal{F}_{s,\infty}\phi_{p}^0)(\widehat{x-z}) =(I -\hat{x}\hat{x}^T)\mO(\frac{1}{\lambda})a_0 , \quad (\mathcal{F}_{p,\infty}\phi_{p}^0)(\widehat{x-z}) = \hat{x}\hat{x}^T\mO(\frac{1}{\lambda^2})b_0, \\
&(\mathcal{F}_{s,\infty}\phi_{s}^0)(\widehat{x-z}) =(I- \hat{x}\hat{x}^{T})\mO(1)c_0, \quad  (\mathcal{F}_{p,\infty}\phi_{s}^0)(\widehat{x-z}) =\hat{x}\hat{x}^T \mO(\frac{1}{\lambda})d_0,
\end{align*}
where $a_0$, $b_0$, $c_0$ and $d_0$ are constant vectors that do not depend on $\lambda$ and $\omega$.

The proof is complete.

\end{proof}

For the scattering of the rigid obstacle under a regular frequency, we have the following corollary.

\begin{corollary}\label{cor:near:shape:rigid:dete}
Assuming $\Omega$ and $D$ be rigid obstacles with the translation \eqref{eq:trasi:domain} and $\lambda  \gg \mu$, $ \mu = \mO(1)$, $|z| \gg 1$, we
have the following asymptotic estimate of the scattered fields of \eqref{eq:scat:pec} as $|x| \rightarrow \infty$, with the incident plane wave $u^{i}(x,d,p)=u_{p}^i + u_{s}^i$ as in \eqref{eq:plane}
\begin{subequations}  \label{eq:normal:fre:repre}
\begin{align}
 u_{\omega,\Omega}^s(x) =& \frac{e^{ik_s|z|}}{4 \pi |z|} \left[ \frac{e^{ik_s|x-z|}}{|x-z|} (\mathcal{F}_{s,\infty}\phi_{s}^\omega)(\widehat{x-z})   + \frac{e^{ik_p|x-z|}}{|x-z|}
(\mathcal{F}_{p,\infty}\phi_{s}^\omega)(\widehat{x-z}) \right] + \mathcal{O}(\frac{1}{|x|^2}) \\
   &+\frac{e^{ik_p|z|}}{4 \pi |z|} \left[ \frac{e^{ik_s|x-z|}}{|x-z|}  (\mathcal{F}_{s,\infty}\phi_{p}^\omega)(\widehat{x-z}) + \frac{e^{ik_p|x-z|}}{|x-z|} (\mathcal{F}_{p,\infty}\phi_{p}^\omega)(\widehat{x-z})\right] + \mO(\frac{1}{|z|^2}).
\end{align}
\end{subequations}
Furthermore,  we have
\begin{align*}
&|(\mathcal{F}_{s,\infty}\phi_{p}^\omega)(\widehat{x-z})| = \mO(\frac{1}{\lambda}), \quad |(\mathcal{F}_{p,\infty}\phi_{p}^\omega)(\widehat{x-z})| = \mO(\frac{1}{\lambda^2}), \\
&|(\mathcal{F}_{s,\infty}\phi_{s}^\omega)(\widehat{x-z})| = \mO(1), \quad  |(\mathcal{F}_{p,\infty}\phi_{s}^\omega)(\widehat{x-z})| = \mO(\frac{1}{\lambda}).
\end{align*}
\end{corollary}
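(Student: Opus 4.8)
The plan is to repeat the derivation of Theorem \ref{thm:obstacle:asym} verbatim, the only change being that the frequency $\omega$ is now held fixed at a regular value rather than sent to zero, so that the genuine $\omega$-dependent densities are retained. Concretely, placing the point source at the origin so that the factors $e^{-ik_p\hat z\cdot y_0}$ and $e^{-ik_s\hat z\cdot y_0}$ disappear, Theorem \ref{thm:pec} already expresses $u_{\omega,\Omega}^s(x)$ as the two translated plane-wave scattered fields $u_p^s(D,\hat z,p;x-z)$ and $u_s^s(D,\hat z,p;x-z)$ weighted by $e^{ik_p|z|}/(4\pi|z|)$ and $e^{ik_s|z|}/(4\pi|z|)$, up to $\mathcal O(|z|^{-2})$. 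For each of these I would insert the combined-layer far-field expansion \eqref{eq:scatter:repre:rigi} together with the explicit patterns \eqref{eq:far:s:asy}--\eqref{eq:far:p:asy}, evaluated on the densities $\phi_p^\omega$ and $\phi_s^\omega$ introduced in the proof of Theorem \ref{thm:obstacle:asym}. After the change of variables $x\mapsto x-z$ this reproduces \eqref{eq:normal:fre:repre} line for line; this step is purely bookkeeping.

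The amplitudes are then controlled by counting how many ``pressure legs'' each term contains, each leg being worth one factor of $1/(2\mu+\lambda)=\mathcal O(1/\lambda)$. On the incident side, \eqref{eq:asym:plane:incident} of Lemma \ref{lem:incident} gives $|u_p^i(\cdot,\hat z,p)|=\mathcal O(1/\lambda)$ and $|u_s^i(\cdot,\hat z,p)|=\mathcal O(1)$ (using $\mu=\mathcal O(1)$), so the solved densities satisfy $\phi_p^\omega=\mathcal O(1/\lambda)$ and $\phi_s^\omega=\mathcal O(1)$. On the far-field side, $\mathcal F_{s,\infty}$ carries the benign prefactor $\tfrac1{2\pi\mu}=\mathcal O(1)$ while $\mathcal F_{p,\infty}$ carries $\tfrac1{2\pi(2\mu+\lambda)}=\mathcal O(1/\lambda)$. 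Multiplying the density order by the pattern order yields exactly $|\mathcal F_{s,\infty}\phi_p^\omega|=\mathcal O(1/\lambda)$, $|\mathcal F_{p,\infty}\phi_p^\omega|=\mathcal O(1/\lambda^2)$, $|\mathcal F_{s,\infty}\phi_s^\omega|=\mathcal O(1)$ and $|\mathcal F_{p,\infty}\phi_s^\omega|=\mathcal O(1/\lambda)$, which are the four asserted bounds.

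I expect the genuine obstacle to be the uniform-in-$\lambda$ boundedness of the solution operator $(I+K_{\omega,D}+iS_{\omega,D})^{-1}$, without which the step $\phi_p^\omega=\mathcal O(1/\lambda)$, $\phi_s^\omega=\mathcal O(1)$ cannot be justified. At a regular frequency one cannot invoke the smallness of Lemma \ref{lem:layer:diff}, so a different argument is needed: as $\lambda\to\infty$ one has $k_p=\omega/\sqrt{2\mu+\lambda}\to 0$ with $k_s=\omega/\sqrt\mu$ fixed, whence the pressure block of $\Gamma$ degenerates to its static Laplace profile while the shear block stays a fixed Helmholtz kernel, and one must show the resulting limiting operator is still boundedly invertible on $C(\partial D)^3$. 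A secondary technical point is that the traction $\mathcal P_y$ hidden in \eqref{eq:far:p:asy} contains the coefficient $\beta=\tfrac{(\lambda+\mu)(\lambda+2\mu)}{\lambda+3\mu}=\mathcal O(\lambda)$, whose growth must be shown to be offset, through the vector structure of the pressure pattern and the $1/(2\mu+\lambda)$ prefactor, so that each pressure leg nets the clean order $\mathcal O(1/\lambda)$ rather than a naive $\mathcal O(\lambda^{-1/2})$. Once these two facts are secured, the corollary follows directly, with the $\lambda$-normalised amplitudes playing the role of the constants $a_0,b_0,c_0,d_0$ of Theorem \ref{thm:obstacle:asym}.
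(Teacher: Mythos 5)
Your proposal follows essentially the same route as the paper's own proof: the paper likewise combines the translation relation \eqref{eq:asym:rigid:twp}, the incident-wave magnitudes \eqref{eq:asym:plane:incident}, the boundedness of $(I+K_{\omega,D}+iS_{\omega,D})^{-1}$ on $C(\partial D)^3$ (cited from \cite{KK,PH}), and the far-field representations \eqref{eq:far:s:asy}--\eqref{eq:far:p:asy} to read off \eqref{eq:normal:fre:repre} and the four magnitude estimates by exactly the ``density order times pattern prefactor order'' bookkeeping you describe. The two technical obstacles you flag --- uniformity in $\lambda$ of the inverse operator bound, and the $\beta=\mathcal{O}(\lambda)$ growth of the traction coefficient inside $\mathcal{F}_{p,\infty}$ --- are passed over silently in the paper's proof, so your account is, if anything, the more careful one.
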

\begin{proof}
By \eqref{eq:asym:plane:incident} and \eqref{eq:asym:rigid:twp}, and the boundedness of the linear operator $(I + K_{\omega,D} + i S_{\omega,D})^{-1}$ in $C(\partial D)^3$ \cite{KK, PH}, we have
\begin{align}
&|  (I + K_{\omega,D} + i S_{\omega,D})^{-1}(- u_{p}^{i}(t,\hat{z},p))| \sim \mO(\frac{1}{\lambda}), \\
&|  (I + K_{\omega,D} + i S_{\omega,D})^{-1}(- u_{s}^{i}(t,\hat{z},p))| \sim \mO(\frac{1}{\mu}). \label{eq:asym:normal:fre}
\end{align}
By the integral representation $[K_{\omega,D} + iS_{\omega,D}]$ in \eqref{eq:scatter:repre:rigi}, and the representations of
$\mathcal{F}_{s,\infty}$ in \eqref{eq:far:s:asy} and $\mathcal{F}_{p,\infty}$ in \eqref{eq:far:p:asy}, we can obtain \eqref{eq:normal:fre:repre}. Furthermore with \eqref{eq:asym:plane:incident}, \eqref{eq:far:s:asy}, and \eqref{eq:far:p:asy} and \eqref{eq:normal:fre:repre}, we can obtain their asymptotic magnitudes as $\lambda \rightarrow \infty$.
\end{proof}
For the scattering of the penetrable medium case, we have a similar theorem for the low frequency scattering.
\begin{theorem}\label{thm:medium:asym}
Let $\Omega$ and $D$ be medium scatterers as described earlier with the translation \eqref{eq:trasi:domain}. Suppose $\lambda  \gg \mu$ and $ \mu = \mO(1)$. Then as $|z| \gg 1$ and $\omega \ll 1$ and denoting $\mO(x,z) = \mO(|z|^{-1}) + \mO(|x|^{-2})$, we
have the following asymptotic estimate of the scattered fields of \eqref{eq:scat:medium} with $|x| \rightarrow \infty$,
\begin{align}
 &u_{\omega,\Omega}^s(x)  = -\omega^2\frac{e^{ik_p|z|}}{4 \pi |z|} \left[ \frac{e^{ik_s|x-z|}}{|x-z|} (\mO(\frac{1}{\lambda})(I-\hat{x}\hat{x}^T)a^m_0 + \frac{e^{ik_p|x-z|}}{|x-z|}\mO(\frac{1}{\lambda^2}) \hat{x}\hat{x}^{T}b^m_0  + \mO(x,z)  \right] \notag \\
 &-\omega^2\frac{e^{ik_s|z|}}{4 \pi |z|} \left[ \frac{e^{ik_s|x-z|}}{|x-z|} (\mO(1)(I-\hat{x}\hat{x}^T)c^m_0  + \frac{e^{ik_p|x-z|}}{|x-z|}
 \mO(\frac{1}{\lambda}) \hat{x}\hat{x}^{T}d^m_0 +  \mO(x,z) \right] + \smallO(\omega^2). \notag
\end{align}
where $a^m_0$, $b^m_0$, $c^m_0$ and $d^m_0$ are all constant vectors that do not depend on $\omega$ and $\lambda$.
\end{theorem}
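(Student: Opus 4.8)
The plan is to mirror the argument for the rigid obstacle in Theorem~\ref{thm:obstacle:asym}, replacing the combined layer potential by the volume potential that represents the medium scattered field. I would start from the translation expansion \eqref{eq:rrr} of Theorem~\ref{thm:medium}, which — with the source at the origin, so $y_0=0$ — reduces the analysis of $u_{\omega,\Omega}^s(x)$ to that of the two plane-wave scattered fields $u_{p}^s(D,\hat z,p;\cdot)$ and $u_{s}^s(D,\hat z,p;\cdot)$ of the translated medium $D$, up to a remainder $\mO(|z|^{-2})$. Recalling from the proof of Theorem~\ref{thm:medium} that $u_{\bullet}^s=V_{\omega,D}(-u_{\bullet}^i)$ with $V_{\omega,D}=\tilde{V}_{\omega,D}(I-\tilde{V}_{\omega,D})^{-1}$ for $\bullet\in\{p,s\}$, and that $\tilde{V}_{\omega,D}$ carries the explicit factor $-\omega^2$ by \eqref{eq:volume:p:D}, the global prefactor $-\omega^2$ in the asserted expansion is produced automatically; the remaining task is to identify the $\omega$-independent leading coefficients and to track the powers of $\lambda$.

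Next I would establish the far-field expansion of the volume potential, the medium analogue of \eqref{eq:scatter:repre:rigi}--\eqref{eq:far:p:asy}. Writing $\psi_\bullet:=(I-\tilde{V}_{\omega,D})^{-1}(-u_{\bullet}^i)$, so that $u_{\bullet}^s=-\omega^2\int_D\Gamma(x,y)n_D(y)\psi_\bullet(y)\,dy$, and inserting the far-field behaviour of $\Gamma(x,y)$ read off from \eqref{eq:point} — the $\curl\curl$ term contributing the tangential shear part with prefactor $k_s^2/\omega^2=1/\mu$, the $\grad\Div$ term the radial pressure part with prefactor $k_p^2/\omega^2=1/(2\mu+\lambda)$ — gives, as $|x|\to\infty$,
\begin{equation*}
u_{\bullet}^s(x)=-\omega^2\left[\frac{e^{ik_s|x|}}{|x|}\,(\mathcal{G}_{s}\psi_\bullet)(\hat x)+\frac{e^{ik_p|x|}}{|x|}\,(\mathcal{G}_{p}\psi_\bullet)(\hat x)+\mO(|x|^{-2})\right],
\end{equation*}
where the tangential shear pattern is $(\mathcal{G}_{s}\psi_\bullet)(\hat x)=\frac{1}{4\pi\mu}(I-\hat x\hat x^{T})\int_D e^{-ik_s\hat x\cdot y}n_D(y)\psi_\bullet(y)\,dy$ and the radial pressure pattern is $(\mathcal{G}_{p}\psi_\bullet)(\hat x)=\frac{1}{4\pi(2\mu+\lambda)}\hat x\hat x^{T}\int_D e^{-ik_p\hat x\cdot y}n_D(y)\psi_\bullet(y)\,dy$.

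I would then pass to the low-frequency limit. Since $\tilde{V}_{\omega,D}=\mO(\omega^2)$ by \eqref{eq:volume:p:D} together with the bound $|\Gamma-\Gamma^0|=\mO(\omega)$ from \eqref{eq:weak:singular:S}, the Neumann series yields $(I-\tilde{V}_{\omega,D})^{-1}=I+\mO(\omega^2)$, whence $\psi_\bullet=-u_{\bullet}^i+\mO(\omega^2)$; combined with $e^{\mp ik_{s,p}\hat x\cdot y}=1+\mO(\omega)$ and the limits $u_{p}^i\to u_{p}^0=\frac{1}{2\mu+\lambda}(d\cdot p)d$, $u_{s}^i\to u_{s}^0=\frac1\mu(d\times p)\times d$ from \eqref{eq:lowlimit}, every factor inside the integrals is $\omega$-independent up to an $\mO(\omega)$ error. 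With the explicit $-\omega^2$ out front, this leaves $\omega$-independent leading coefficients and a relative $\mO(\omega)$ correction, i.e.\ an overall remainder $\mO(\omega^3)=\smallO(\omega^2)$. Reading off magnitudes exactly as in Theorem~\ref{thm:obstacle:asym} via \eqref{eq:asym:plane:incident} and \eqref{eq:lowlimit} — namely $|u_{p}^0|=\mO(\lambda^{-1})$, $|u_{s}^0|=\mO(1)$, and the prefactors $\mu^{-1}=\mO(1)$, $(2\mu+\lambda)^{-1}=\mO(\lambda^{-1})$ — gives $\mathcal{G}_{s}\psi_p=\mO(\lambda^{-1})$, $\mathcal{G}_{p}\psi_p=\mO(\lambda^{-2})$, $\mathcal{G}_{s}\psi_s=\mO(1)$ and $\mathcal{G}_{p}\psi_s=\mO(\lambda^{-1})$; these are precisely the four coefficients $a^m_0,b^m_0,c^m_0,d^m_0$, and after the translation $x\mapsto x-z$ and collecting the $\mO(|z|^{-1})+\mO(|x|^{-2})$ errors into $\mO(x,z)$ one obtains the claimed expansion.

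The main obstacle will be the simultaneous control of the two independent regimes $\omega\to0$ and $\lambda\to\infty$ inside the volume potential: one must check that the $\mO(\omega)$ corrections are uniform in $\lambda$, so that factoring out $-\omega^2$ genuinely produces the clean $\smallO(\omega^2)$ remainder, and that the resolvent bound $(I-\tilde{V}_{\omega,D})^{-1}=I+\mO(\omega^2)$ holds uniformly as $\lambda\to\infty$. The latter is where the boundedness of $\mu$ and the explicit $\lambda$-dependence of $\Gamma$ through $k_p^2/\omega^2=(2\mu+\lambda)^{-1}$ must be used: since $\Gamma$ (and hence $\Gamma^0$ in \eqref{eq:gamma0}) stays bounded as $\lambda\to\infty$, the operator norm of $\tilde{V}_{\omega,D}$ is $\mO(\omega^2)$ uniformly in $\lambda$, which secures the uniformity. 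The remaining steps are the routine far-field bookkeeping already carried out for the rigid case.
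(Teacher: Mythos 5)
Your proposal follows essentially the same route as the paper: the translation relation of Theorem~\ref{thm:medium}, the volume-potential representation $u^s=\tilde V_{\omega,D}(I-\tilde V_{\omega,D})^{-1}u^i$, the low-frequency resolvent approximation (the paper phrases it as $\|\tilde V_{\omega,D}-\omega^2\tilde V^0_D\|\sim\mathcal{O}(\omega^3)$, giving $u(D,d,p;\cdot)=u^0+\mathcal{O}(\omega^2)$), the far-field expansion \eqref{eq:asym:elastic:funda} of $\Gamma$ yielding the operators $\mathcal{F}^m_{s,\infty}$, $\mathcal{F}^m_{p,\infty}$, and the $\lambda$-bookkeeping via \eqref{eq:lowlimit}. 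Your closing remark on the uniformity in $\lambda$ of the $\mathcal{O}(\omega)$ remainders is a point the paper leaves implicit, but it does not change the argument.
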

\begin{proof}
The proof is similar to those of Lemma \ref{lem:layer:diff} and Theorem \ref{thm:obstacle:asym}. We introduce the following volume potential
\[
 (\tilde{V}_{D}^{0}\varphi)(x)  :=   - \int_{D}\Gamma^{0}(x,y)n_{D}(y)\varphi(y)dy.
\]
By \eqref{eq:weak:singular:S} and \cite[Chap. 5]{PHhab}, we see as $\omega \rightarrow 0$
\begin{equation}
 \|\tilde{V}_{\omega,D}-\omega^2\tilde{V}_{D}^0\|_{C(D)\rightarrow C^{1, \gamma}(D)} \sim \|n_{D}\|_{C(D)}\mathcal{O}(\omega^3).
\end{equation}
For the scattering by plane incident wave of the medium $D$ \eqref{eq:scat:medium}, as $\omega \rightarrow 0$, we have
 \begin{equation}\label{eq:asym:medium:total:1}
 u(D,d, p;x) = (I- \tilde{V}_{\omega,D})^{-1}u_{\omega, p,d}^{i} =  (I - \omega^2 \tilde{V}_{D}^{0} +\mathcal{O}(\omega) )^{-1}(u^0 + \mathcal{O}(\omega)) = u^0 + \mathcal{O}(\omega^2).
 \end{equation}
In addition, the fundamental solution matrix $\Gamma(x,y)$ has the following asymptotic behavior \cite{Kup},
 \begin{equation}\label{eq:asym:elastic:funda}
 \Gamma(x,y) = \frac{e^{ik_p|x|}}{|x|}\frac{\hat{x}\hat{x}^{T}e^{-ik_p \hat{x}\cdot y}}{4\pi(\lambda+2\mu)}
 + \frac{e^{ik_s|x|}}{|x|}\frac{1}{4\pi \mu}(I-\hat{x}\hat{x}^{T})e^{-ik_s\hat{x}\cdot y} + \mathcal{O}(\frac{1}{|x|^2}), \ |x| \rightarrow \infty.
 \end{equation}
By the integral representation \eqref{eq:volume:p:D} of the scattered field of \eqref{eq:scat:medium}, $ u^s(D,d, p;x) = [\tilde{V}_{\omega,D}u(D,d,p;\cdot)](x)$, together with \eqref{eq:asym:medium:total:1} and \eqref{eq:asym:elastic:funda},
we have
\begin{equation*}
 u^{\infty}(D,d, p;\hat{x}) = -\omega^2 \left[ \frac{e^{ik_p|x|}}{|x|} (\mathcal{F}^{m}_{p,\infty} u(D,d, p;\cdot))(\hat{x}) +   \frac{e^{ik_s|x|}}{|x|}(\mathcal{F}^{m}_{s,\infty} u(D,d, p;\cdot))(\hat{x}) \right],
\end{equation*}
where the far field mapping operators $\mathcal{F}^{m}_{s,\infty}$ and $\mathcal{F}^{m}_{p,\infty}$ are defined by
\begin{align}
(\mathcal{F}^{m}_{s,\infty}\phi)(\hat{x})&:= \frac{1}{4\pi \mu}\int_{D} [I -\hat{x}\hat{x}^{T}]e^{-ik_s\hat{x}\cdot y} n_{D}(y) \phi(y) dy,\label{eq:far:s:asy:medium}\\
(\mathcal{F}^{m}_{p,\infty}\phi)(\hat{x})& := \frac{1}{4\pi (2\mu + \lambda)}\int_{D} \hat{x}\hat{x}^{T}e^{-ik_p\hat{x}\cdot y}n_{D}(y) \phi(y) dy.  \label{eq:far:p:asy:medium}
\end{align}
Recalling \eqref{eq:medium:syste}, together with \eqref{eq:lowlimit} and \eqref{thm:medium:asym}, similar to the rigid obstacle case \eqref{eq:rigid:fullexpan}, we see
\begin{align}
 &u_{\omega,\Omega}^s(x)  = -\omega^2\frac{e^{ik_p|z|}}{4 \pi |z|} \left[ \frac{e^{ik_s|x-z|}}{|x-z|} (\mathcal{F}^m_{s,\infty}u_{p}^0)(\widehat{x-z}) + \frac{e^{ik_p|x-z|}}{|x-z|} (\mathcal{F}^m_{p,\infty}u_{p}^0)(\widehat{x-z}) + \mO(x,z) \right] \notag \\
 &-\omega^2\frac{e^{ik_s|z|}}{4 \pi |z|} \left[ \frac{e^{ik_s|x-z|}}{|x-z|} (\mathcal{F}^m_{s,\infty}u_{s}^0)(\widehat{x-z}) + \frac{e^{ik_p|x-z|}}{|x-z|} (\mathcal{F}^m_{p,\infty}u_{s}^0)(\widehat{x-z}) +  \mO(x,z)\right] + \smallO(\omega^2).  \label{eq:medium:fullexpan}
\end{align}
Combining \eqref{eq:lowlimit}, \eqref{eq:far:s:asy:medium} and \eqref{eq:far:p:asy:medium}, as $|x| \rightarrow \infty$ and $\omega \rightarrow 0$, we have
\begin{align}
&(\mathcal{F}^m_{s,\infty}u_{p}^0)(\widehat{x-z}) = (I-\hat{x}\hat{x}^{T})\mO(\frac{1}{\lambda})a^m_0 , \quad (\mathcal{F}^m_{p,\infty}u_{p}^0)(\widehat{x-z}) = \hat{x}\hat{x}^{T}\mO(\frac{1}{\lambda^2})b^m_0, \\
&(\mathcal{F}^m_{s,\infty}u_{s}^0)(\widehat{x-z}) = (I-\hat{x}\hat{x}^{T})\mO(1)c^m_0, \quad  (\mathcal{F}^m_{p,\infty}u_{s}^0)(\widehat{x-z}) = \hat{x}\hat{x}^{T} \mO(\frac{1}{\lambda})d^m_0,
\end{align}
where $a^m_0$, $b^m_0$, $c^m_0$ and $d^m_0$ are all constant complex vectors that do not depend on $\omega$ and $\lambda$, which leads to this theorem.
\end{proof}
For the medium scattering under a regular frequency, we have the following result. We denote
\[
 v_{p}^{\omega}(x) = (I- \tilde{V}_{\omega,D})^{-1}u_{p}^{i}, \ \ v_{s}^{\omega}(x) = (I- \tilde{V}_{\omega,D})^{-1}u_{s}^{i}.
\]
\begin{corollary}\label{cor:asym:medium:shape:dete}
 Let $\Omega$ and $D$ be medium with the translation \eqref{eq:trasi:domain}. Supposing $\lambda  \gg \mu$, $ \mu = \mO(1)$, $|z| \gg 1$, then we
have the following asymptotic estimate of the scattered fields of \eqref{eq:scat:medium} as $|x| \rightarrow \infty$, with the incident plane wave $u^{i}(x,d,p)=u_{p}^i + u_{s}^i$
\begin{align}
 u_{\omega,\Omega}^s(x) =& -\omega^2\frac{e^{ik_s|z|}}{4 \pi |z|} \left[ \frac{e^{ik_s|x-z|}}{|x-z|} (\mathcal{F}^m_{s,\infty}v_{s}^\omega)(\widehat{x-z})   + \frac{e^{ik_p|x-z|}}{|x-z|}
(\mathcal{F}^m_{p,\infty}v_{s}^\omega)(\widehat{x-z}) \right] + \mathcal{O}(\frac{1}{|x|^2}) \\
   &-\omega^2\frac{e^{ik_p|z|}}{4 \pi |z|} \left[ \frac{e^{ik_s|x-z|}}{|x-z|}  (\mathcal{F}^m_{s,\infty}v_{p}^\omega)(\widehat{x-z}) + \frac{e^{ik_p|x-z|}}{|x-z|} (\mathcal{F}^m_{p,\infty}v_{p}^\omega)(\widehat{x-z})\right] + \mO(\frac{1}{|z|^2}). \notag
\end{align}
Additionally, we have
\begin{align*}
&|(\mathcal{F}^m_{s,\infty}v_{p}^\omega)(\widehat{x-z})| = \mO(\frac{1}{\lambda}), \quad |(\mathcal{F}^m_{p,\infty}v_{p}^\omega)(\widehat{x-z})| = \mO(\frac{1}{\lambda^2}), \\
&|(\mathcal{F}^m_{s,\infty}v_{s}^\omega)(\widehat{x-z})| = \mO(1), \quad  |(\mathcal{F}^m_{p,\infty}v_{s}^\omega)(\widehat{x-z})| = \mO(\frac{1}{\lambda}).
\end{align*}
\end{corollary}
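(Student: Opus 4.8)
The plan is to mirror the rigid-obstacle argument of Corollary~\ref{cor:near:shape:rigid:dete}, but with the combined-layer machinery replaced by the volume potential $\tilde V_{\omega,D}$. I would start from the translation relation \eqref{eq:medium:syste} established in Theorem~\ref{thm:medium}. Since the point source is now placed at the origin ($y_0=0$, per the standing assumption of this section), the exponential weights reduce to $e^{ik_p|z|}/(4\pi|z|)$ and $e^{ik_s|z|}/(4\pi|z|)$, and what remains is to expand the two plane-wave scattered fields $u_p^s(D,\hat z,p;\cdot)$ and $u_s^s(D,\hat z,p;\cdot)$ in their far-field form, this time keeping $\omega$ at a regular scale rather than sending it to $0$ as in Theorem~\ref{thm:medium:asym}.

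For that expansion I would use the volume-potential representation $u^s(D,d,p;x)=[\tilde V_{\omega,D}\,u(D,d,p;\cdot)](x)$ from \eqref{eq:volume:p:D} together with the far-field asymptotics \eqref{eq:asym:elastic:funda} of the fundamental solution $\Gamma(x,y)$. Exactly as in the derivation preceding \eqref{eq:far:s:asy:medium}--\eqref{eq:far:p:asy:medium}, this factors the radiating field into its shear part, carrying the operator $\mathcal F^m_{s,\infty}$ and the projector $I-\hat x\hat x^T$, and its pressure part, carrying $\mathcal F^m_{p,\infty}$ and the projector $\hat x\hat x^T$. Identifying the relevant total fields as $v_p^\omega=(I-\tilde V_{\omega,D})^{-1}u_p^i$ and $v_s^\omega=(I-\tilde V_{\omega,D})^{-1}u_s^i$ and substituting these expansions back into \eqref{eq:medium:syste} (with $x$ replaced by $x-z$ throughout) yields the claimed representation.

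It then remains to track the orders in $\lambda$. From \eqref{eq:asym:plane:incident} the incident fields obey $|u_p^i|=\mO(1/\lambda)$ and $|u_s^i|=\mO(1/\mu)=\mO(1)$; invoking the continuous invertibility of $(I-\tilde V_{\omega,D})$ on $C(D)^3$ (stated in the proof of Theorem~\ref{thm:medium}) gives $\|v_p^\omega\|_{C(D)^3}=\mO(1/\lambda)$ and $\|v_s^\omega\|_{C(D)^3}=\mO(1)$. Finally, the scalar prefactors $1/(4\pi\mu)=\mO(1)$ in \eqref{eq:far:s:asy:medium} and $1/(4\pi(2\mu+\lambda))=\mO(1/\lambda)$ in \eqref{eq:far:p:asy:medium} combine multiplicatively with these bounds, e.g. $|(\mathcal F^m_{p,\infty}v_p^\omega)(\widehat{x-z})|=\mO(1/\lambda)\cdot\mO(1/\lambda)=\mO(1/\lambda^2)$, and likewise for the other three, producing exactly the four listed magnitude estimates.

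The main obstacle I anticipate is justifying that the bound on $(I-\tilde V_{\omega,D})^{-1}$ is uniform in $\lambda$ as $\lambda\to\infty$: the operator $\tilde V_{\omega,D}$ depends on $\lambda$ through $\Gamma$, both via $k_p$ and via the $1/(2\mu+\lambda)$ weighting of its pressure component, so one must verify that the resolvent norm does not itself grow with $\lambda$ and thereby cancel the $\mO(1/\lambda)$ decay inherited from $u_p^i$. Once this uniform invertibility is secured, the remaining bookkeeping with the $\hat x\hat x^T$ and $I-\hat x\hat x^T$ projectors and the $\mO(|z|^{-2})$ remainder is routine.
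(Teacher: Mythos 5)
Your proposal is correct and follows exactly the route the paper intends: the paper omits the proof, stating only that it is ``completely similar to that of Theorem~\ref{thm:medium:asym},'' and your argument is precisely that adaptation --- the translation relation \eqref{eq:medium:syste}, the volume-potential representation with the far-field asymptotics \eqref{eq:asym:elastic:funda}, and the order bookkeeping via $|u_p^i|=\mO(1/\lambda)$, $|u_s^i|=\mO(1)$ against the $1/(4\pi\mu)$ and $1/(4\pi(2\mu+\lambda))$ prefactors in \eqref{eq:far:s:asy:medium}--\eqref{eq:far:p:asy:medium}, carried out at regular frequency instead of in the limit $\omega\to 0$. Your remark about needing the resolvent bound on $(I-\tilde V_{\omega,D})^{-1}$ to be uniform in $\lambda$ is a fair caveat that the paper itself glosses over, but it does not alter the argument.
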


The proof of Corollary \ref{cor:asym:medium:shape:dete} is completely similar to that of Theorem \ref{thm:medium:asym} and we skip it here.

\section{A Two-stage Reconstruction Algorithm}\label{sec:two-stage}
For the elastic wave scattering, Theorems \ref{thm:pec} and \ref{thm:medium} indicate that either with the incident plane pressure wave $u_{p}^i$ or with the incident plane shear wave $u_{s}^i$, the scattered field consists of both the shear and pressure waves which are difficult to separate one from the other. However, one can benefit from the orthogonality of the shear and pressure waves with only a single incident plane wave, i.e., with only $u_{p}^i$ or $u_{s}^i$. Similar ideas could be found in \cite{HLLS} and \cite{Sini}, where the direction and phase information of the far fields can be separated for reconstructions.
 For the incident point waves, by Lemma \ref{lem:incident}, two kinds of incident plane waves are mixed with complex phases even in the asymptotic expansion of point source waves. Fortunately, with the help of the low frequency analysis developed in Section \ref{sec:analysis:lowfre}, we can benefit from the dominating part of the point source wave for the nearly compressible materials.
This crucial observation, together with the results derived in Section \ref{sec:mathground}, enables us to propose a two-stage algorithm for finding both the locations and the shapes of the unknown elastic scatterers from an admissible dictionary. At the first stage, a low frequency is used for locating the scatterers, and at the second stage, a regular frequency is used for determining the shapes of the scatterers from the dictionary.

\subsection{Locating the scatterers}\label{subsect:locating}

We let $\Lambda$ denote the bounded measurement surface in $\mathbb{R}^3$, and $\tilde{z}$ be an arbitrary sampling point contained in a bounded sampling region $S \subseteq \mathbb{R}^3$. We also introduce $u_{\omega}^{s}(D,z;x):=u_{\omega,\Omega}^{s}(x)$ and $u_{\omega}^{\infty}(D,z;\hat{x}):=u_{\omega,\Omega}^{\infty}(\hat{x})$ as
the corresponding scattered field and far field of $\Omega$ due to the incident point source $u_{\omega, p, ep}^i(x,0)$ as in \eqref{eq:point:source}. These are the measurement data for our reconstruction schemes.

With the transition relations built in Theorems \ref{thm:pec} and \ref{thm:medium}, and the asymptotic estimates given in Theorems \ref{thm:obstacle:asym} and \ref{thm:medium:asym}, we propose the following two-stage algorithm for locating the position and determining the shape of the scatterer.

There are two separate cases in our subsequent discussion, depending on the noise level $\epsilon$ in the measured data. From the low frequency analysis in Theorems \ref{thm:obstacle:asym} and \ref{thm:medium:asym}, it can be seen that if the order of noise level is  $\smallO (\frac{1}{\lambda})$, then we can extract the $\mathcal{O}(\frac{1}{\lambda})$ order pressure wave alone for the reconstruction. If the noise level is of order $\mathcal{O}(\frac{1}{\lambda})$, the $\mathcal{O}(\frac{1}{\lambda})$ order pressure wave would be polluted. We then extract the dominating $\mathcal{O}(1)$ order shear wave instead for the reconstruction.

\medskip

Case I:~The noise level $\epsilon \sim  \smallO (\frac{1}{\lambda})$.
In this case, we only use the pressure wave field for the reconstruction and introduce the following imaging functional,
\begin{equation}\label{eq:indicator:p}
I_{p}(D,z, \tilde{z}): = \frac{|\langle P_{\hat{x}}[u_{\omega}^s(D,z;\cdot)], \mathring{u}_{p}^s(\tilde{z};\cdot)\rangle_{L^{2}(\Lambda)} |}{\|P_{\hat{x}}[u_{\omega}^s(D,z;\cdot)]\|_{L^2(\Lambda)} \|\mathring{u}_{p}^s(\tilde{z};\cdot)\|_{L^2(\Lambda)} }, \quad \tilde{z} \in S,
\end{equation}
where $u_{\omega,\Omega}^s(D,z;\cdot) $ is the scattered field produced by the incident point source $u_{\omega, p, ep}^i(x,0)$ as in \eqref{eq:point:source}. $P_{\hat{x}}=\hat{x}\hat{x}^{T}$ is the projection along the direction $\hat{x}$, and the test function $\mathring{u}_{p}^s(\tilde{z};x)$ is defined as
\begin{equation}
\mathring{u}_{p}^s(\tilde{z};x): = \frac{e^{ik_s|\tilde{z}|}}{4\pi |\tilde{z}|} \frac{e^{ik_p|x-\tilde{z}|}}{|x-\tilde{z}|}\hat{x}.
\end{equation}
If the measurement data are phaseless, then the imaging functional is modified as follows, and see \cite{Klib1, Klib2} for more reconstruction schemes and algorithms with phaseless data.

\begin{equation}\label{eq:indicator:p:phaseless}
I_{|p|}(D,z, \tilde{z}): = \frac{|\langle |P_{\hat{x}}[u_{\omega}^s(D,z;\cdot)]|, |\mathring{u}_{p}^s(\tilde{z};\cdot)|\rangle_{L^{2}(\Lambda)} |}{\|P_{\hat{x}}[u_{\omega}^s(D,z;\cdot)]\|_{L^2(\Lambda)} \|\mathring{u}_{p}^s(\tilde{z};\cdot)\|_{L^2(\Lambda)} }, \quad \tilde{z} \in S.
\end{equation}
\medskip

Case II: the noise level $ \epsilon \sim \mO(\frac{1}{\lambda})$.
In this case, we will use the shear far field for the reconstruction. According to Theorems \ref{thm:obstacle:asym} and \ref{thm:medium:asym}, although the pressure and shear parts are mixed together in the near scattered field, the shear far field is the dominant part with magnitude $\mO(\mu)$.
We introduce
\[
\mathring{u}_{k_s, m, H_1}^{\infty}(\tilde{z};x) := \frac{e^{ik_s|\tilde{z}|}}{4\pi |\tilde{z}|} e^{-ik_s \hat{x}\cdot \tilde{z}}U_{1}^{m}(\hat{x}),\quad \mathring{u}_{k_s, m, H_2}^{\infty}(\tilde{z};x) := \frac{e^{ik_s|\tilde{z}|}}{4\pi |\tilde{z}|} e^{-ik_s \hat{x}\cdot \tilde{z}}V_{1}^{m}(\hat{x}), \quad m=-1,0,1,
\]
where $U_{1}^m$ and $V_{1}^m$ are the vectorial spherical harmonics (cf.~\cite{CK}),
\[
U_{1}^m: = \frac{1}{2} \text{Grad}Y_{1}^m(\hat{x}), \quad V_{1}^m: = \frac{1}{2} \hat{x} \times \text{Grad}Y_{1}^m(\hat{x}), \quad m=-1,0,1.
\]
We propose the following imaging functional,
\begin{equation}\label{eq:indicator:s}
I_{s}(D,z, \tilde{z}): = \frac{\sqrt{\sum_{j=1}^2\sum_{m=-1}^1|\langle (I-P_{\hat{x}})u_{\omega}^{\infty}(D,z;\cdot), \mathring{u}_{k_s,m, H_j}^{\infty}(\tilde{z};\cdot)\rangle_{T^{2}(\mathbb{S}^2)}|^2}}{\|(I-P_{\hat{x}})u_{\omega}^{\infty}(D,z;\cdot)\|_{T^2(\mathbb{S}^2)} 1/{(4\pi |\tilde{z}|)}}, \quad \tilde{z} \in S,
\end{equation}
where $T^2(\mathbb{S}^2)$ is the tangential vector space on the unit sphere $\mathbb{S}^2$, and $(I-P_{\hat{x}})$ is the projection to the shear part,
\[
(I-P_{\hat{x}})u_{\omega}^{\infty}(D,z;\cdot) =u_{\omega}^{\infty}(D,z;\cdot) - \hat{x} \hat{x}^{T}u_{\omega}^{\infty}(D,z;\cdot).
\]

Next we show the indicating behaviours of the imaging functionals introduced in \eqref{eq:indicator:p}, \eqref{eq:indicator:p:phaseless} and \eqref{eq:indicator:s}, for locating the position $z$ of the scatterer $\Omega$.

\begin{theorem}
If $D$ is a rigid scatterer as in Theorem~\ref{thm:obstacle:asym} (or a penetrable medium given in Theorem \ref{thm:medium:asym}) and assume $|a_0|$, $|b_0|$, $|c_0|$, $|d_0|$ (or $|a^m_0|$, $|b^m_0|$, $|c^m_0|$, $|d^m_0|$ for medium case) are positive for all $D \in \mathfrak{D}$. Then we have the following asymptotic expansion
\begin{equation}\label{eq:indica:relation:p}
\lim_{\lambda \rightarrow \infty}\lim_{\omega \rightarrow 0} I_{p}(D,z, \tilde{z}) = \mathring{I}_{p}(z;\tilde{z})[1+ \mathcal{O}(|z|^{-1})], \quad |z|\rightarrow \infty, \quad |x| \rightarrow \infty,
\end{equation}
uniformly for all $D \in \mathfrak{D}$, $z \in \Lambda$ and $\tilde{z} \in \Lambda$, where for the indicator functional \eqref{eq:indicator:p}, one has
\begin{equation}
 \mathring{I}_{p}(z;\tilde{z})  = \frac{|\langle \mathring{u}_{p}^s(z;\cdot), \mathring{u}_{p}^s(\tilde{z};\cdot)  \rangle_{L^2(\Lambda)} |}{\|\mathring{u}_{p}^s({z};\cdot)\|_{L^2(\Lambda)} \|\mathring{u}_{p}^s(\tilde{z};\cdot)\|_{L^2(\Lambda)} }, \quad \tilde{z} \in S,
\end{equation}
and for the phaseless indicator functional \eqref{eq:indicator:p:phaseless}, denoting $ \mathring{I}_{|p|}$ as the similar limit of $I_{|p|}$ as in \eqref{eq:indica:relation:p}, one has
\begin{equation}
 \mathring{I}_{|p|}(z;\tilde{z})  = \frac{|\langle |\mathring{u}_{p}^s(z;\cdot)|, |\mathring{u}_{p}^s(\tilde{z};\cdot) | \rangle_{L^2(\Lambda)} |}{\|\mathring{u}_{p}^s({z};\cdot) \|_{L^2(\Lambda)}\|\mathring{u}_{p}^s(\tilde{z};\cdot)\|_{L^2(\Lambda)} }, \quad \tilde{z} \in S.
\end{equation}
Similarly for the indicator $I_{s}(D,z, \tilde{z})$, we have
\begin{equation}\label{eq:indica:relation:s}
\lim_{\lambda \rightarrow \infty}\lim_{\omega \rightarrow 0} I_{s}(D,z, \tilde{z}) = \mathring{I}_{s}(z;\tilde{z})[1+ \mathcal{O}(|z|^{-1})], \quad |z|\rightarrow \infty, \quad |x| \rightarrow \infty,
\end{equation}
uniformly for all $D \in \mathfrak{D}$, $\hat{z}\in \mathbb{S}^2$ and $\tilde{z} \in S$, where

\begin{equation}\label{eq:indicator:asym}
\mathring{I}_{s}(z, \tilde{z}): = \frac{\sqrt{\sum_{j=1}^2\sum_{m=-1}^1|\langle \tilde{u}_{k_s}^{\infty}(D,z;\cdot), \mathring{u}_{k_s,m, H_j}^{\infty}(\tilde{z};\cdot)\rangle_{T^{2}(\mathbb{S}^2)}|^2}}{\|\tilde{u}_{k_s}^{\infty}(D,z;\cdot)\|_{T^2(\mathbb{S}^2)} 1/{(4\pi |\tilde{z}|)}}, \quad \tilde{z} \in S,
\end{equation}
with
\[
\tilde{u}_{k_s}^{\infty}(D,z;\cdot) =  \frac{e^{ik_s|z|}}{4 \pi |z|} e^{-ik_s\hat{x}\cdot z} \begin{cases} (\mathcal{F}_{s,\infty}\phi_{s}^0)(\widehat{x-z}), \quad D \ \text{rigid obstacle}, \ \phi_{s}^0 \  \text{as in} \ \eqref{eq:phi:u:rela}, \ \\
(\mathcal{F}^m_{s,\infty}u_{s}^0)(\widehat{x-z}), \quad D \  \text{medium}, \ u_{s}^0 \  \text{as in}\  \eqref{eq:medium:fullexpan}.
\end{cases}
\]

\end{theorem}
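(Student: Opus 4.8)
The plan is to substitute the low-frequency asymptotic expansions established in Theorem~\ref{thm:obstacle:asym} (rigid case) and Theorem~\ref{thm:medium:asym} (medium case) into each normalized functional, and then to exploit the $L^2$-orthogonal splitting of the scattered field into its normal (pressure) and tangential (shear) parts effected by the projections $P_{\hat x}=\hat x\hat x^{T}$ and $I-P_{\hat x}$. The iterated limit is taken in the order prescribed by the statement: first $\omega\to 0$, which replaces the frequency-dependent densities and far-field operators by their static counterparts $\phi_s^0,\phi_p^0$ (resp.\ $u_s^0,u_p^0$) with an $\mO(\omega)$ error controlled by Lemma~\ref{lem:layer:diff}; then $\lambda\to\infty$, which selects in each projected channel the single dominant term among the four terms of the expansion. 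Since both the numerator and the denominator of every functional are built from the same field, the scalar amplitude of the dominant term cancels in the ratio, and only the shape of that term survives.

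For $I_p$ and $I_{|p|}$ I would first apply $P_{\hat x}$ to the expansion of Theorem~\ref{thm:obstacle:asym}. This annihilates the two tangential terms carrying $(I-\hat x\hat x^{T})$ and leaves the two normal terms, of orders $\mO(\lambda^{-1})$ (prefactor $e^{ik_s|z|}$) and $\mO(\lambda^{-2})$ (prefactor $e^{ik_p|z|}$). As $\lambda\to\infty$ the former dominates, and its phase $\tfrac{e^{ik_s|z|}}{4\pi|z|}\tfrac{e^{ik_p|x-z|}}{|x-z|}$ matches exactly that of the test field $\mathring u_p^s(z;\cdot)$; moreover it is pointwise parallel to $\hat x$, just as $\mathring u_p^s(\tilde z;\cdot)$ is. Consequently the $\mO(\lambda^{-1})$ magnitude and the complex constant $e^{ik_s|z|}/(4\pi|z|)$ factor out of the normalized cross-correlation, and the ratio reduces to the correlation of $\mathring u_p^s(z;\cdot)$ with $\mathring u_p^s(\tilde z;\cdot)$, i.e.\ to $\mathring I_p(z;\tilde z)$. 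For the phaseless functional the pointwise modulus commutes with this scalar factorization, so the identical reduction yields $\mathring I_{|p|}$.

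For $I_s$ I would instead use the far-field version of Theorem~\ref{thm:obstacle:asym} (resp.\ Theorem~\ref{thm:medium:asym}), in which $e^{ik_\kappa|x-z|}/|x-z|$ is replaced by $e^{ik_\kappa|x|}e^{-ik_\kappa\hat x\cdot z}/|x|$ and $\widehat{x-z}\to\hat x$. Applying $I-P_{\hat x}$ now retains the two tangential terms, of orders $\mO(1)$ (prefactor $e^{ik_s|z|}$, density $\phi_s^0$ or $u_s^0$) and $\mO(\lambda^{-1})$ (prefactor $e^{ik_p|z|}$); the former dominates as $\lambda\to\infty$ and coincides with the field $\tilde u_{k_s}^\infty(D,z;\cdot)$ written in the statement. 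Pairing this against the vectorial-spherical-harmonic test fields $\mathring u_{k_s,m,H_j}^\infty(\tilde z;\cdot)$ in $T^2(\mathbb S^2)$ and dividing by the stated norms then produces $\mathring I_s(z;\tilde z)$ directly, since here the limiting numerator retains the full shear pattern $(\mathcal F_{s,\infty}\phi_s^0)$ rather than collapsing to a pattern-free field.

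The remainder analysis supplies the factor $[1+\mO(|z|^{-1})]$: the absolute errors $\mO(|z|^{-2})$, $\mO(|x|^{-2})$ and $\mO(\omega)$ (resp.\ $\smallO(\omega^2)$ in the medium case) are compared against the $\mO(|z|^{-1})$ size of the retained term, while the positivity hypotheses on $|a_0|,|b_0|,|c_0|,|d_0|$ (resp.\ their medium analogues) keep the normalizing denominators bounded away from zero uniformly over $\mathfrak D$, which upgrades the pointwise limits to limits uniform in $D$, $z$ and $\tilde z$. I expect the main obstacle to lie precisely in the pressure channel: the dominant projected field carries the angular weight $\hat x\cdot d_0$ inherited from the dipole-type low-frequency pressure pattern $\hat x\hat x^{T}\mO(\lambda^{-1})d_0$, whereas the test field $\mathring u_p^s(z;\cdot)$ is pattern-free. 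Reconciling the two — so that the correlation collapses cleanly to $\mathring I_p$ — requires showing that this weight factors out of the normalized ratio, which is where the narrowness of the backscattering aperture $\Lambda$ enters, rendering $\hat x$ and hence $\hat x\cdot d_0$ essentially constant across $\Lambda$ so that it cancels between numerator and denominator.
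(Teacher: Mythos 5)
Your proposal follows essentially the same route as the paper's proof: substitute the low-frequency expansions of Theorem~\ref{thm:obstacle:asym} (resp.\ Theorem~\ref{thm:medium:asym}), apply $P_{\hat x}$ (resp.\ $I-P_{\hat x}$) to isolate the dominant $\mO(\lambda^{-1})$ pressure (resp.\ $\mO(1)$ shear) channel after the iterated limit $\omega\to 0$, $\lambda\to\infty$, cancel the common scalar amplitude in the normalized ratio, and absorb the remainders into the factor $[1+\mO(|z|^{-1})]$. Your closing remark on factoring the angular weight $\hat x\cdot d_0$ out over the small aperture is in fact more explicit than the paper, which simply writes the projected field as $\mathring u_p^s(z;x)\bigl[\hat x^T d_0\,\mO(\lambda^{-1})\hat x+\mO(|z|^{-1})+\mO(\omega)\bigr]\bigl[1+\mO(|z|^{-1})\bigr]$ and substitutes it into the indicator.
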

\begin{proof}
We only prove the case that $D$ is a rigid obstacle with the indicator functional given by $I_{p}(D,z, \tilde{z})$. The other cases can be proved in a similar manner. By Theorems \ref{thm:pec} and \ref{thm:obstacle:asym}, there exists a constant vector $d_0$, such that
as $|z|\rightarrow \infty$,  we have by \eqref{eq:asym:obs:lambda},
\begin{align}
\lim_{\omega \rightarrow 0} P_{\hat{x}}[u_{\omega}^s(D,z;x)] &= \mathring{u}_{p}^s(z;x)[\hat{x}^Td_{0}\mO(\frac{1}{\lambda}) \hat{x} +
\mathcal{O}(|z|^{-1}) + \mO(\omega)][1+ \mathcal{O}(|z|^{-1})], \label{eq:relation:near:far:p}
\end{align}
uniformly for $\hat{z}\in \mathbb{S}^2$ and $x\in S$. Substituting  \eqref{eq:relation:near:far:p} into \eqref{eq:indicator:p}, we arrive at \eqref{eq:indica:relation:p}.

By the Cauchy-Schwarz inequality, we see $ \mathring{I}_{p}(z;\tilde{z})  \leq 1$ unless $\mathring{u}_{p}^s(z;\cdot)$ and $\mathring{u}_{p}^s(\tilde{z};\cdot)$ are constant multiples of each other, which can only happen when $z=\tilde{z}$. This then helps us to find the location $z$ with the sampling points $\tilde{z}$.

\end{proof}

\subsection{Shape determination}
Once the location is approximated by $\mathring{z}= \argmax_{\tilde{z}}I_{p}(D,z;\tilde{z})$ or $\mathring{z}= \argmax_{\tilde{z}}I_{s}(D,z;\tilde{z})$ in Section \ref{subsect:locating}, together with Corollary \ref{cor:near:shape:rigid:dete} and \ref{cor:asym:medium:shape:dete}, we next present the numerical schemes for reconstructing the shapes through dictionary matching under a regular frequency. The corresponding analysis also depends on the noise level $\epsilon$ through a similar strategy as before.

\medskip

Case I:~The noise level $\epsilon \sim \smallO (\frac{1}{\lambda})$.
In view of Corollaries \ref{cor:near:shape:rigid:dete} and \ref{cor:asym:medium:shape:dete}, we propose the following imaging functionals,
\begin{equation}\label{eq:indicator:p:multi}
J_{p}(D_i, D_j; z, \mathring{z}): = \frac{|\langle P_{\hat{x}}[u_{\omega}^s(D_i,z;\cdot)], P_{\hat{x}}[u_{sp}^s(D_j,\mathring{z};\cdot)]\rangle_{L^{2}(\Lambda)} |}{\|P_{\hat{x}}[u_{\omega}^s(D_i,z;\cdot)]\|_{L^2(\Lambda)} \|P_{\hat{x}}[u_{sp}^s(D_j,\mathring{z};\cdot)]\|_{L^2(\Lambda)} },
\end{equation}
for $D_i, D_j \in \mathfrak{D}$, where $u_{\omega}^s(D_i,z;\cdot): = u_{\omega, D_i+z}^{s}(\cdot)$, and the test function
\begin{equation}\label{eq:shape:sp:p}
u_{sp}^s(D_j,\mathring{z};x): =  \frac{e^{ik_s|\mathring{z}|}}{4\pi |\mathring{z}|} \frac{e^{ik_p|x-\mathring{z}|}}{|x-\mathring{z}|}u_{s}^{\infty}(D_j,\hat{\mathring{z}},p;\widehat{x-\mathring{z}}),
\end{equation}
with $u_{s}^{\infty}(D_j,\hat{\mathring{z}},p;\widehat{x-\mathring{z}})$ the far field of the scattered wave of $D$ ($D$ is a rigid obstacle or a penetrable medium) due to the incident shear plane wave only, i.e., $u_{s}^i$ in \eqref{eq:plane}.
%

\medskip
Case II: The noise level $ \epsilon \sim \mO(\frac{1}{\lambda})$.
 We can again use near-field scattered data in the dictionary produced by the shear incident plane wave $u_{s}^i$. We denote it as $u_{ss}^s$, i.e.,
 \begin{equation}
u_{ss}^s(D_j,\mathring{z};x): =  \frac{e^{ik_s|\mathring{z}|}}{4\pi |\mathring{z}|} \frac{e^{ik_s|x-\mathring{z}|}}{|x-\mathring{z}|}u_{s}^{\infty}(D_j,\hat{\mathring{z}},p;\widehat{x-\mathring{z}}).
\end{equation}
 Here $u_{s}^{\infty}(D_j,\hat{\mathring{z}},p;\widehat{x-\mathring{z}})$ is the same as in \eqref{eq:shape:sp:p}. Then the imaging functional becomes

\begin{equation}\label{eq:indicator:s:multi}
J_{s}(D_i, D_j;z, \mathring{z}): = \frac{|\langle (I - P_{\hat{x}}) u_{\omega}^{s}(D_i,z;\hat{x}),  (I - P_{\hat{x}})\mathring{u}_{ss}^{s}(D_j,\mathring{z};\hat{x})\rangle_{L^2(\Lambda)} | }{\|(I - P_{\hat{x}})u_{\omega}^{s}(D_i,z;\hat{x})\|_{L^2(\Lambda)} \|(I - P_{\hat{x}})\mathring{u}_{ss}^{s}(D_j,\mathring{z};\hat{x})\|_{L^2(\Lambda)} }
\end{equation}

With these preparations, we can present our second stage algorithm for the shape reconstruction. Through this scheme, we can find the shape of the scatterer $\Omega$ by dictionary searching.

\begin{theorem}
 Assume that there exists a constant $c_0>0$ such that $\|P_{\hat{x}}[u_{sp}^s(D_j,\mathring{z};\cdot)]\|_{L^2(\Lambda)} \geq c_0$ (or $\|(I - P_{\hat{x}})\mathring{u}_{ss}^{s}(D_j,\mathring{z};\hat{x})\|_{L^2(\Lambda)} \geq c_0$) for all $D_i \in \mathfrak{D}$. Then for any sufficiently small $\varepsilon>0$ there exist $R_0$ and $\delta>0$ such that
if $|z| \geq R_0$ and $|z-\mathring{z}| \leq \delta$,
\begin{equation}\label{eq:shape:appro}
|J_{l_i}(D_i,D_j;z, \mathring{z}) - \hat{J}_{l_i}(D_i,D_j;z)| \leq z, \quad \forall D_i, D_j \in \mathfrak{D},
\end{equation}
where
\[
\hat{J}_{l_i}(D_i,D_j;z): = J_{l_i}(D_i,D_j;z,z), \quad  l_i = p \ \  \text{or} \  \  l_i=s.
\]
Furthermore, as $\mathring{z} \rightarrow z$ and $D_i = D_j$, $J_{l_i}(D_i,D_j;z, \mathring{z})$ obtains its maximal value $1$ approximately.
\end{theorem}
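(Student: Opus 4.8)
The plan is to split the argument into a stability estimate, which yields \eqref{eq:shape:appro}, and a maximization claim, which gives the approximate diagonal value $1$. Throughout I treat the functional $J_p$ in \eqref{eq:indicator:p:multi} (the case $l_i=p$); the case $l_i=s$ for $J_s$ in \eqref{eq:indicator:s:multi} is identical after replacing the projection $P_{\hat{x}}$ by $I-P_{\hat{x}}$ and the test field $u_{sp}^s$ by $u_{ss}^s$. The first step is to isolate the sole source of $\mathring{z}$-dependence: in $J_p(D_i,D_j;z,\mathring{z})$ the measured field $A:=P_{\hat{x}}[u_\omega^s(D_i,z;\cdot)]$ is fixed, and only the test field $B(\mathring{z}):=P_{\hat{x}}[u_{sp}^s(D_j,\mathring{z};\cdot)]$ moves. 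From the explicit form \eqref{eq:shape:sp:p}, $B(\mathring{z})$ depends on $\mathring{z}$ through the scalar $e^{ik_s|\mathring{z}|}/(4\pi|\mathring{z}|)$, the translated Green factor $e^{ik_p|x-\mathring{z}|}/|x-\mathring{z}|$, and the argument $\widehat{x-\mathring{z}}$ of the far field $u_s^\infty(D_j,\hat{\mathring{z}},p;\cdot)$; for $x\in\Lambda$ and $\mathring{z}$ near $z$ (so that $\mathring{z}\neq x$ and $\mathring{z}\neq 0$) each factor is smooth, whence $\mathring{z}\mapsto B(\mathring{z})$ is Lipschitz into $L^2(\Lambda)$. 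Therefore $\|B(\mathring{z})-B(z)\|_{L^2(\Lambda)}\to 0$ as $\mathring{z}\to z$, uniformly over the finite pair set $\mathfrak D\times\mathfrak D$.

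I would then invoke the elementary normalized-correlation bound: for fixed $A$ and any $B_1,B_2$ with $\|B_1\|,\|B_2\|\geq c_0$,
\[
\left|\frac{|\langle A,B_1\rangle|}{\|A\|\,\|B_1\|}-\frac{|\langle A,B_2\rangle|}{\|A\|\,\|B_2\|}\right|\ \leq\ \frac{2}{c_0}\,\|B_1-B_2\|,
\]
which follows by writing each ratio as $|\langle A/\|A\|,\,B_i/\|B_i\|\rangle|$ and estimating the difference of the normalized vectors. Applying it with $B_1=B(\mathring{z})$, $B_2=B(z)$, and using the hypothesis $\|B(\mathring{z})\|\geq c_0$ together with $|z|\geq R_0$ (which places us in the asymptotic regime of Corollaries~\ref{cor:near:shape:rigid:dete} and \ref{cor:asym:medium:shape:dete}, so that $\|A\|$ is nondegenerate), gives
\[
|J_p(D_i,D_j;z,\mathring{z})-\hat J_p(D_i,D_j;z)|\ \leq\ \frac{2}{c_0}\,\|B(\mathring{z})-B(z)\|_{L^2(\Lambda)}.
\]
Choosing $\delta$ so small that the right-hand side is at most $\varepsilon$ for all $|z-\mathring{z}|\leq\delta$ and all $D_i,D_j\in\mathfrak D$ establishes \eqref{eq:shape:appro}.

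For the maximization claim I set $D_i=D_j$ and $\mathring{z}=z$ and show $\hat J_p(D_i,D_i;z)=1+\mO(1/\lambda)+\mO(1/|z|)$. By Corollary~\ref{cor:near:shape:rigid:dete} (resp.~\ref{cor:asym:medium:shape:dete}), for $x\in\Lambda$ with $|x-z|\sim|z|\to\infty$ the pressure projection of the measured near field is dominated by the single shear-incident, pressure-scattered term,
\[
P_{\hat{x}}[u_\omega^s(D_i,z;x)]=\frac{e^{ik_s|z|}}{4\pi|z|}\,\frac{e^{ik_p|x-z|}}{|x-z|}\,(\mathcal F_{p,\infty}\phi_s^\omega)(\widehat{x-z})\,\bigl(1+\mO(1/\lambda)+\mO(1/|z|)\bigr),
\]
since the competing pressure-incident contribution carries the smaller magnitude $\mO(1/\lambda^2)$ and the distinct phase $e^{ik_p|z|}$. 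This matches, to leading order, $P_{\hat{x}}[u_{sp}^s(D_i,z;x)]$, because $P_{\hat{x}}[u_s^\infty(D_i,\hat z,p;\cdot)]$ is exactly the pressure far-field pattern $\mathcal F_{p,\infty}\phi_s^\omega$ of $D_i$. Thus $A$ and $B(z)$ are parallel up to relative error $\mO(1/\lambda)+\mO(1/|z|)$, and the Cauchy--Schwarz equality case forces $\hat J_p(D_i,D_i;z)=1+\mO(1/\lambda)+\mO(1/|z|)$; combined with the stability estimate this yields the approximate maximum $1$ as $\mathring{z}\to z$.

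The hard part is this maximization step: one must verify that after the projection $P_{\hat{x}}$ the measured near field genuinely collapses onto the single term reproduced by the test function, i.e.~that the subdominant pressure-incident scattered wave (order $1/\lambda^2$, phase $e^{ik_p|z|}$) and the $\mO(|z|^{-2})$ near-to-far-field remainders cannot conspire to rotate $A$ away from $B(z)$ on $\Lambda$. This is precisely where the $1/\lambda$ versus $1/\lambda^2$ gap of the nearly incompressible regime and the largeness $|z|\geq R_0$ are indispensable, and where the orthogonality $P_{\hat{x}}(I-P_{\hat{x}})=0$ is used to discard the tangential shear-scattered far-field contributions that would otherwise pollute the correlation.
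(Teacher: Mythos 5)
Your proposal is correct and takes essentially the same route as the paper: both arguments use the expansions of Corollaries~\ref{cor:near:shape:rigid:dete} and~\ref{cor:asym:medium:shape:dete} to collapse $P_{\hat{x}}[u_{\omega}^s(D_i,z;\cdot)]$ (resp.\ $(I-P_{\hat{x}})u_{\omega}^s$) onto the single dominant term of order $\mO(1/\lambda)$ (resp.\ $\mO(1)$) that the dictionary test function reproduces, discard the competing $\mO(1/\lambda^2)$ (resp.\ $\mO(1/\lambda)$) and $\mO(|z|^{-2})$ contributions, and conclude via the equality case of Cauchy--Schwarz. The only divergence is that you make the stability claim \eqref{eq:shape:appro} quantitative through the explicit normalized-correlation bound $|J_{p}(D_i,D_j;z,\mathring{z})-\hat{J}_{p}(D_i,D_j;z)|\leq (2/c_0)\|B(\mathring{z})-B(z)\|_{L^2(\Lambda)}$ combined with Lipschitz continuity of $\mathring{z}\mapsto B(\mathring{z})$, whereas the paper disposes of this step by simply asserting that the dictionary data approximate the measured field; your treatment is, if anything, the more complete one there.
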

\begin{proof}
We first consider the case that $D$ is a rigid obstacle. For the indicator $J_{p}$ with the noise level $\epsilon \sim  \smallO (\frac{1}{\lambda})$, it suffices to calculate $P_{\hat{x}}[u_{\omega}^s(D_i,z;\cdot)]$ and  $P_{\hat{x}}[u_{sp}^s(D_j,\mathring{z};\cdot)]$ as in \eqref{eq:indicator:p:multi}. According to Corollary \ref{cor:near:shape:rigid:dete}, we have
\begin{align*}
P_{\hat{x}}[u_{\omega}^s(D_i,z;\cdot)] = P_{\hat{x}} [u_{\omega,\Omega}^s(x)] =& \frac{e^{ik_s|z|}}{4 \pi |z|}  \frac{e^{ik_p|x-z|}}{|x-z|}
(\mathcal{F}_{p,\infty}\phi_{s}^\omega)(\widehat{x-z}) + \mO(\frac{1}{|z|^2})  \\
 &   +\frac{e^{ik_p|z|}}{4 \pi |z|}   \frac{e^{ik_p|x-z|}}{|x-z|} (\mathcal{F}_{p,\infty}\phi_{p}^\omega)(\widehat{x-z}) + \mathcal{O}(\frac{1}{|x|^2}).
\end{align*}
Since $|(\mathcal{F}_{p,\infty}\phi_{p}^\omega)(\widehat{x-z})| = \mO(\frac{1}{\lambda^2})$ and $|(\mathcal{F}_{p,\infty}\phi_{s}^\omega)(\widehat{x-z})| = \mO(\frac{1}{\lambda})$, we have
\begin{equation}\label{eq:shape:unknown:p}
P_{\hat{x}}[u_{\omega}^s(D_i,z;\cdot)] = \frac{e^{ik_s|z|}}{4 \pi |z|}   \frac{e^{ik_p|x-z|}}{|x-z|} (\mathcal{F}_{p,\infty}\phi_{s}^\omega)(\widehat{x-z}) + \mathcal{O}(\frac{1}{|x|^2}) + \mO(\frac{1}{\lambda^2}).
\end{equation}
Moreover, for the precomputed dictionary data $u_{sp}^s(D_j,\mathring{z};\cdot)$, we have
\begin{equation}\label{eq:shape:known:p}
P_{\hat{x}}[u_{sp}^s(D_j,\mathring{z};\cdot)] = \frac{e^{ik_s|\mathring{z}|}}{4 \pi |\mathring{z}|}   \frac{e^{ik_p|x-\mathring{z}|}}{|x-\mathring{z}|} (\mathcal{F}_{p,\infty}\phi_{s}^\omega)(\widehat{x-\mathring{z}}) + \mathcal{O}(\frac{1}{|x|^2}).
\end{equation}
It can be seen that the dictionary data \eqref{eq:shape:known:p} approximate the measured scattered field \eqref{eq:shape:unknown:p} under the condition of this theorem, and then the approximation \eqref{eq:shape:appro} follows.
By the Cauchy-Schwarz inequality and comparing the above two formulas, we see that $J_{p}(D_i,D_j;z, \mathring{z})$ achieves its maximal value approximately if and only if $D_i = D_j$.

For the indicator $J_s$ with the noise level $ \epsilon \sim \mO(\frac{1}{\lambda})$, it is sufficient for us to calculate $(I - P_{\hat{x}}) u_{\omega}^{s}(D,z;\hat{x})$ and $(I - P_{\hat{x}})\mathring{u}_{ss}^{s}(D_j,\mathring{z};\hat{x})$. By Corollary \ref{cor:near:shape:rigid:dete}, we have
\begin{align*}
(I - P_{\hat{x}})u_{\omega}^{s}(D,z;\hat{x}) =& (I - P_{\hat{x}}) u_{\omega,\Omega}^s(x)
 = \frac{e^{ik_s|z|}}{4 \pi |z|}  \frac{e^{ik_s|x-z|}}{|x-z|} (\mathcal{F}_{s,\infty}\phi_{s}^\omega)(\widehat{x-z}) + \mathcal{O}(\frac{1}{|x|^2}) \\
 &+\frac{e^{ik_p|z|}}{4 \pi |z|}  \frac{e^{ik_s|x-z|}}{|x-z|}  (\mathcal{F}_{s,\infty}\phi_{p}^\omega)(\widehat{x-z}) + \mO(\frac{1}{|z|^2}).
\end{align*}
Since $|(\mathcal{F}_{s,\infty}\phi_{s}^\omega)(\widehat{x-z}) |= \mO(1)$ and $|(\mathcal{F}_{s,\infty}\phi_{p}^\omega)(\widehat{x-z})| = \mO(\frac{1}{\lambda})$, we thus have
\begin{equation}\label{eq:shape:unknown}
(I - P_{\hat{x}})u_{\omega}^{s}(D,z;\hat{x})  = \frac{e^{ik_s|z|}}{4 \pi |z|} \frac{e^{ik_s|x-z|}}{|x-z|} (\mathcal{F}_{s,\infty}\phi_{s}^\omega)(\widehat{x-z}) + \mathcal{O}(\frac{1}{|x|^2}) + \mO(\frac{1}{\lambda}) +  \mathcal{O}(\frac{1}{|z|^2}).
\end{equation}
Similarly,  it can be verified that
\begin{equation}\label{eq:shape:known}
(I - P_{\hat{x}})\mathring{u}_{ss}^{s}(D_j,\mathring{z};\hat{x}) = \frac{e^{ik_s|z|}}{4 \pi |z|}  \frac{e^{ik_s|x-z|}}{|x-z|} (\mathcal{F}_{s,\infty}\phi_{s}^\omega)(\widehat{x-z}) + \mathcal{O}(\frac{1}{|x|^2}).
\end{equation}
Hence the dictionary data \eqref{eq:shape:known} approximate the measured scattered field \eqref{eq:shape:unknown} under the condition of this theorem, and the approximation \eqref{eq:shape:appro} also follows.
Again by Cauchy-Schwarz inequality and comparing the above two formulas, we conclude that $J_{s}(D_i,D_j;z, \mathring{z})$ achieves its maximal value approximately if and only if $D_i = D_j$.

The other case that $\Omega$ is a penetrable medium follows by completely similar arguments, along with the use of Corollary \ref{cor:asym:medium:shape:dete}.
\end{proof}

\section{Numerical Tests}\label{sec:num}

In this section, we present numerical tests to illustrate the effectiveness and efficiency of the proposed method.
We verify the elastic imaging technique using limited aperture near field data. 
All the numerical experiments are carried out using MATLAB R2017a on a
Lenovo workstation with 2.3GHz Intel Xeon E5-2670 v3 processor and 512GB of RAM.

The experimental setup is as follows.
As shown in Figure~\ref{fig:Dictionary}, the admissible dictionary set consists
of six referemce domains, $D_i$, $i=1,\ldots,6$, which are composed of a number of unit cubes.
The measurement surface $\Lambda$ is set to be a unit square in the $x^2 x^3$-plane and centered at the origin. The scattered elastic near fields on the measurement surface $\Lambda$ of reference domains
in the dictionary  $\mathfrak{D}$ as in \eqref{eq:dic:class} are first collected in advance for incident point signal waves with different locations.

In the following tests, we let the Poisson ratio $\nu=0.475$ and Young's modulus $E=3$, thus $\mu \sim \mO (1)$,  $\lambda \sim  \mO (10^2)$ and $\lambda \gg \mu$.
The detecting wavelength for locating objects is set
to be $\omega_{1}:=1$ and the detecting wavelength for shape determination
is set to be $\omega_{2}:=20$.
Without loss of generality, the target scatterer is given by $\Omega:=D_{i}+z_{0}$.
Here $z_{0}$ is fixed as $(40,\,0,\,0)$.

\begin{figure}
\hfill{}\includegraphics[width=0.3\textwidth]{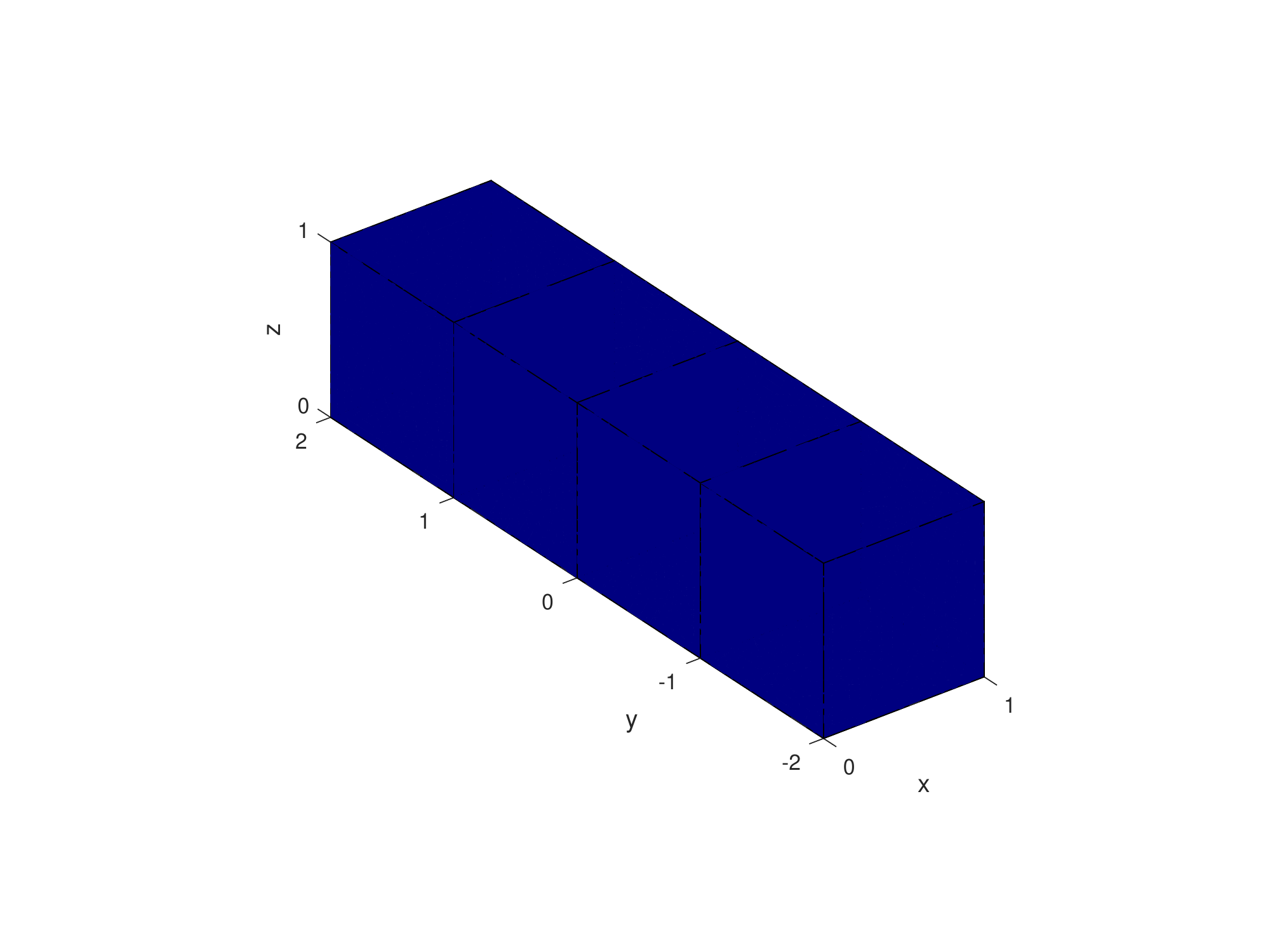}\hfill{}\includegraphics[width=0.3\textwidth]{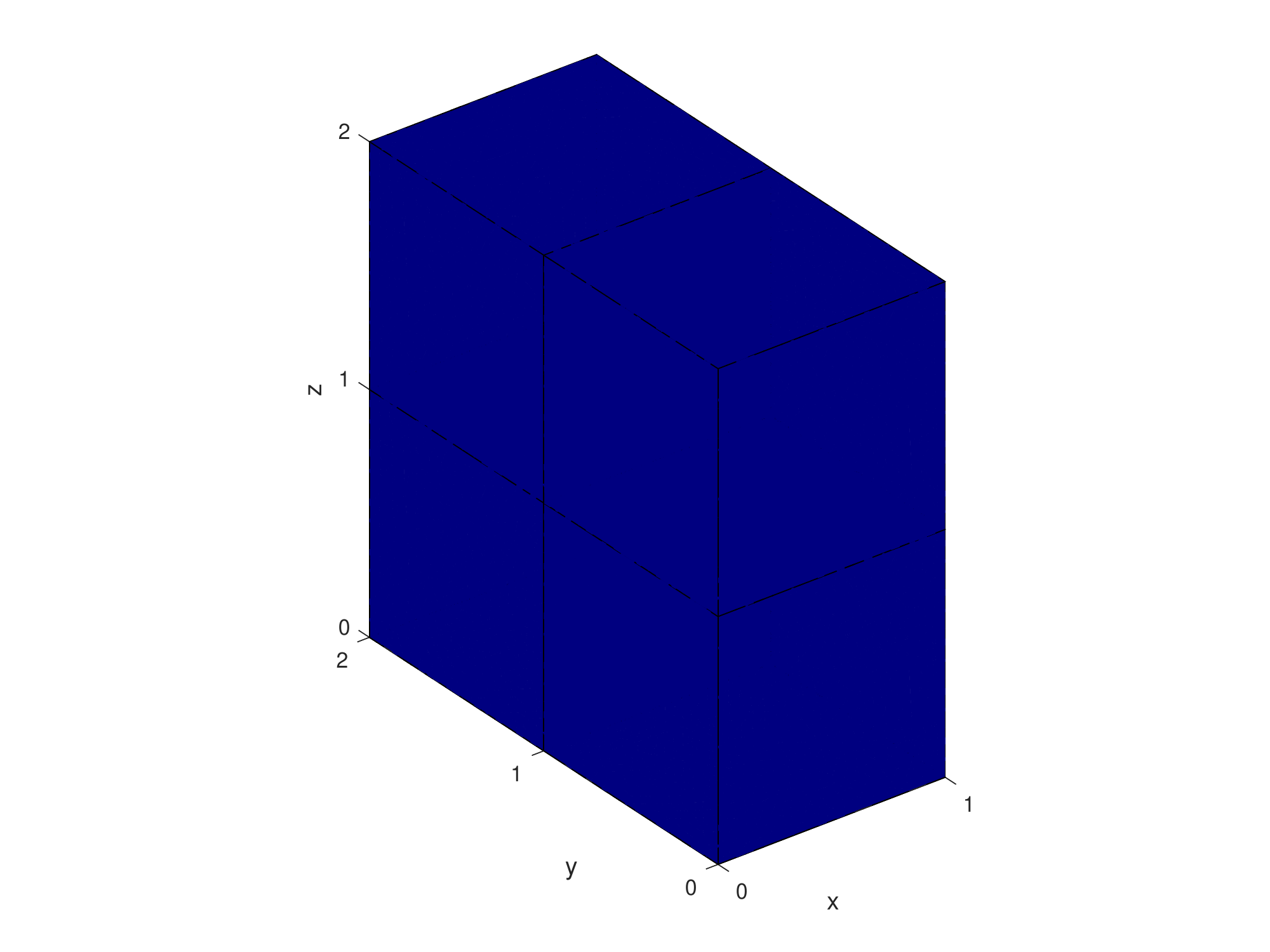}\hfill{}\includegraphics[width=0.3\textwidth]{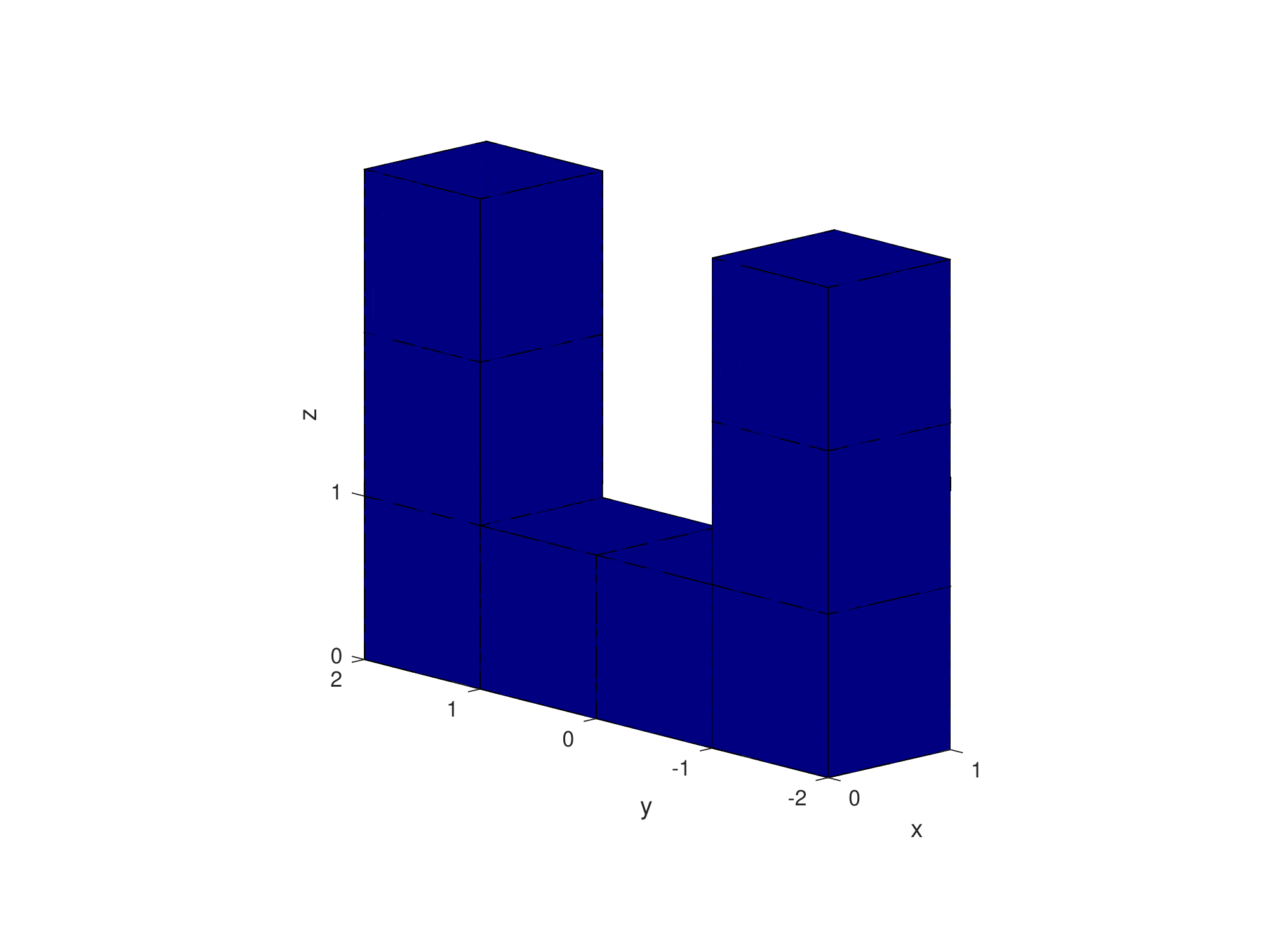}\hfill{}

\hfill{}(a)\hfill{}(b)\hfill{}(c)\hfill{}

\hfill{}\includegraphics[width=0.3\textwidth]{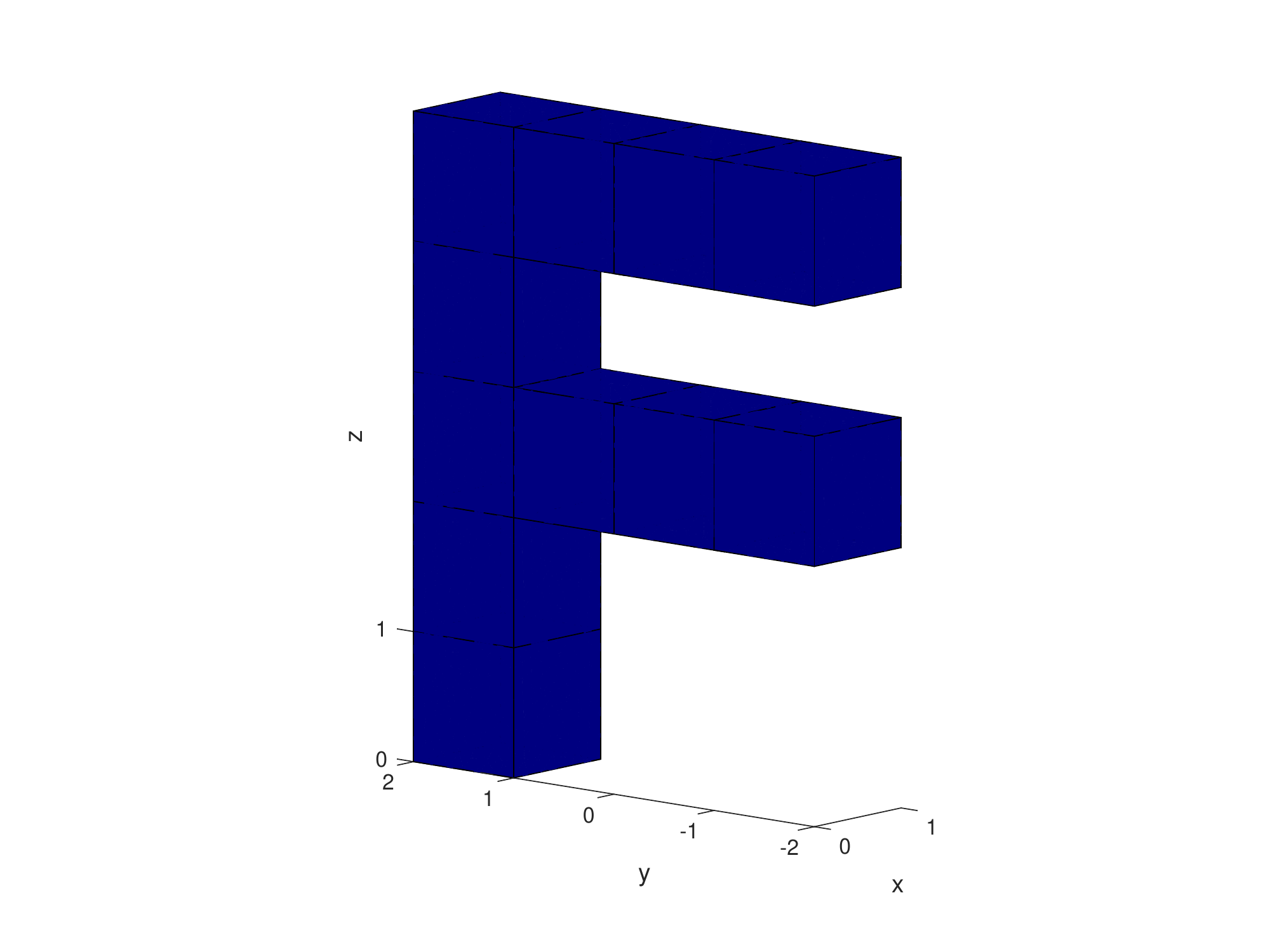}\hfill{}\includegraphics[width=0.3\textwidth]{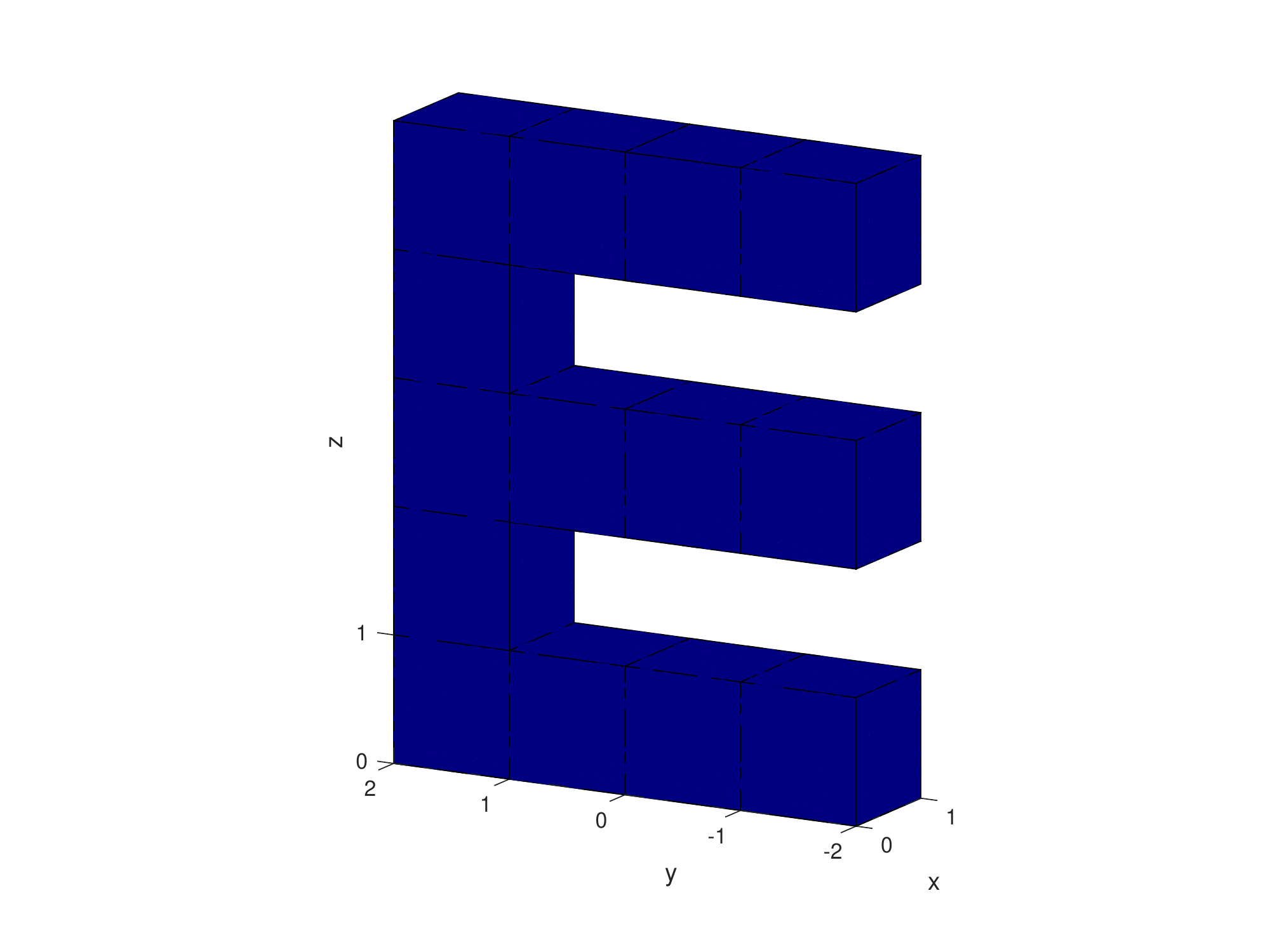}\hfill{}\includegraphics[width=0.3\textwidth]{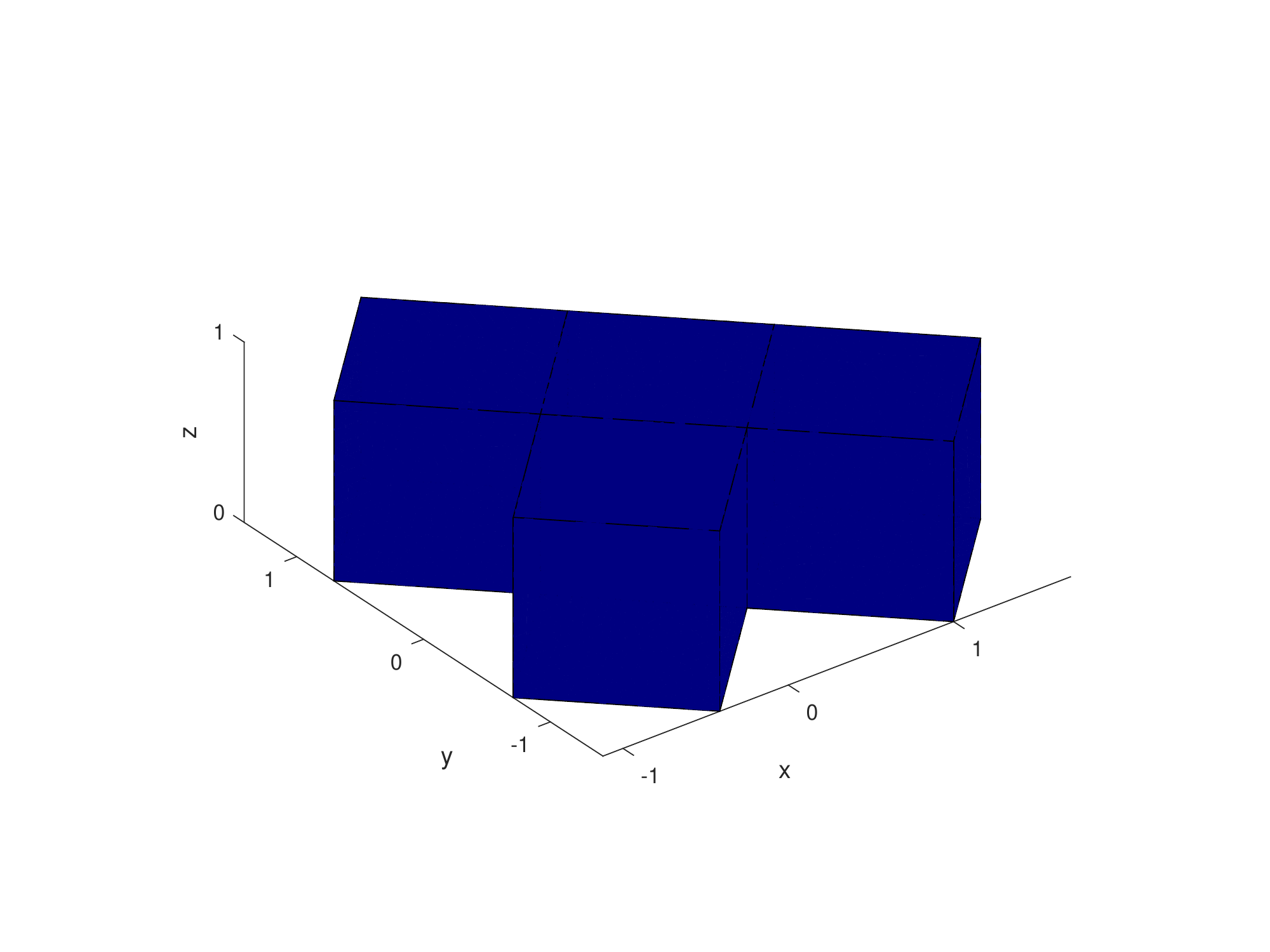}\hfill{}

\hfill{}(d)\hfill{}(e)\hfill{}(f)\hfill{}

\caption{\label{fig:Dictionary}Dictionary of a priori known scatterers.}
\end{figure}

\subsection{Rigid body case}

In the first example, we test with  six reference domains with the rigid body boundary condition.

In the first stage, the positions are found in the locating stage by finding the maximum value of the indicator functions $I_p$ in \eqref{eq:indicator:p} and $I_s$ in \eqref{eq:indicator:s}.
In the noise-free case,   the coordinates and
the distance from the exact location are shown in Table~\ref{tab:location-test-pec} for each reference domain in the dictionary.
One can find that the error of positions deduced from $I_{s}$ is much
smaller than those from $I_{p}$. The reason for this phenomenon is due to the fact that $k_{s}\gg k_{p}$,
thus the shear wave with wave number $k_{s}$ has a higher resolution. In light of the high resolution of shear waves, it is enough for us to locate and imaging the reference domain only
by $I_{s}$ in \eqref{eq:indicator:s}.
Compared with the exact position $z_0$, the difference between the exact and estimated positions using $I_s$ in \eqref{eq:indicator:s} are always below $0.1\%$ in terms of the Euclidean distance.

Next, we compute indicator function value $J_s$ in \eqref{eq:indicator:s:multi} using the near field data measured on $\Lambda$ and the approximate position found in Table~\ref{tab:location-test-pec}.
The result of shape determination is shown in Table~\ref{tab:gesture-test-pec}. The values of the indicator function value have  been
rescaled between $0$ and $1$ by normalizing with respect to the maximum function value among all six reference gestures in each row of the table to highlight the unique reference domain
identified. The same normalization procedure is employed in the sequel. We see from Table~\ref{tab:gesture-test-pec} that the peak value are always taken in the diagonal line
when the measurement data match with the precomputed data of the correct gesture.

\begin{table}

\hfill{}%
\begin{tabular}{|c|c|c|c|c|c|c|}
\hline
 & $D_{1}$ & $D_{2}$ & $D_{3}$ & $D_{4}$ & $D_{5}$ & $D_{6}$\tabularnewline
\hline
\hline
$\mathring{z}_{0}^{1}$ & $40.0278$ & $40.0547$ & $40.0965$ & $40.0158$ & $40.0971$ & $40.0958$\tabularnewline
\hline
$\mathring{z}_{0}^{2}$ & $0.0679$ & $0.0743$ & $0.0655$ & $0.0706$ & $0.0277$ & $0.0097$\tabularnewline
\hline
$\mathring{z}_{0}^{3}$ & $0.0758$ & $0.0392$ & $0.0171$ & $0.0032$ & $0.0046$ & $0.0823$\tabularnewline
\hline
$\left|\mathring{z}_{0}-z_{0}\right|$ & $0.1055$ & $0.1003$ & $0.1173$ & $0.1196$ & $0.0322$ & $0.1277$\tabularnewline
\hline
\end{tabular}\hfill{}

\smallskip

\hfill{}$I_{p}$  indicator function.\hfill{}

\medskip

\hfill{}%
\begin{tabular}{|c|c|c|c|c|c|c|}
\hline
 & $D_{1}$ & $D_{2}$ & $D_{3}$ & $D_{4}$ & $D_{5}$ & $D_{6}$\tabularnewline
\hline
\hline
$\mathring{z}_{0}^{1}$ & $39.9901$ & $39.9811$ & $39.9784$ & $39.9846$ & $40.0035$ & $39.9853$\tabularnewline
\hline
$\mathring{z}_{0}^{2}$ & $0.0100$ & $0.0180$ & $-0.0056$ & $0.0201$ & $-0.0251$ & $0.0121$\tabularnewline
\hline
$\mathring{z}_{0}^{3}$ & $-0.0101$ & $-0.0110$ & $-0.0204$ & $-0.0256$ & $-0.0265$ & $-0.0265$\tabularnewline
\hline
$\left|\mathring{z}_{0}-z_{0}\right|$ & $0.0173$ & $0.0283$ & $0.0302$ & $0.0360$ & $0.0367$ & $0.0326$\tabularnewline
\hline
\end{tabular}\hfill{}

\smallskip

\hfill{}$I_{s}$  indicator function.\hfill{}

\caption{\label{tab:location-test-pec} Location test using $I_{p}$ and $I_{s}$  for elastic  rigid bodies without noise. }
\end{table}
\begin{table}
\hfill{}%
\begin{tabular}{|c|c|c|c|c|c|c|}
\hline
 & $D_{1}$ & $D_{2}$ & $D_{3}$ & $D_{4}$ & $D_{5}$ & $D_{6}$\tabularnewline
\hline
\hline
$D_{1}$ & $\boldsymbol{1.0000}$ & $0.9234$ & $0.9291$ & $0.8660$ & $0.8249$ & $0.9453$\tabularnewline
\hline
$D_{2}$ & $0.9232$ & $\boldsymbol{1.0000}$ & $0.9245$ & $0.9502$ & $0.9109$ & $0.9146$\tabularnewline
\hline
$D_{3}$ & $0.9280$ & $0.9242$ & $\boldsymbol{1.0000}$ & $0.9040$ & $0.9494$ & $0.9851$\tabularnewline
\hline
$D_{4}$ & $0.8650$ & $0.9510$ & $0.9040$ & $\boldsymbol{1.0000}$ & $0.9301$ & $0.9742$\tabularnewline
\hline
$D_{5}$ & $0.8249$ & $0.9007$ & $0.9592$ & $0.9310$ & $\boldsymbol{1.0000}$ & $0.9169$\tabularnewline
\hline
$D_{6}$ & $0.9451$ & $0.9147$ & $0.9849$ & $0.9732$ & $0.9249$ & $\boldsymbol{1.0000}$\tabularnewline
\hline
\end{tabular}\hfill{}

\smallskip

\hfill{}$J_{p} $  indicator function.\hfill{}

\medskip

\hfill{}%
\begin{tabular}{|c|c|c|c|c|c|c|}
\hline
 & $D_{1}$ & $D_{2}$ & $D_{3}$ & $D_{4}$ & $D_{5}$ & $D_{6}$\tabularnewline
\hline
\hline
$D_{1}$ & $\boldsymbol{1.0000}$ & $0.9134$ & $0.8291$ & $0.8260$ & $0.8249$ & $0.9453$\tabularnewline
\hline
$D_{2}$ & $0.9212$ & $\boldsymbol{1.0000}$ & $0.9245$ & $0.9202$ & $0.9309$ & $0.9146$\tabularnewline
\hline
$D_{3}$ & $0.9260$ & $0.9242$ & $\boldsymbol{1.0000}$ & $0.9240$ & $0.9194$ & $0.9351$\tabularnewline
\hline
$D_{4}$ & $0.8630$ & $0.8510$ & $0.9140$ & $\boldsymbol{1.0000}$ & $0.9101$ & $0.9542$\tabularnewline
\hline
$D_{5}$ & $0.8149$ & $0.8007$ & $0.9492$ & $0.9310$ & $\boldsymbol{1.0000}$ & $0.9269$\tabularnewline
\hline
$D_{6}$ & $0.9461$ & $0.8147$ & $0.9449$ & $0.9532$ & $0.9249$ & $\boldsymbol{1.0000}$\tabularnewline
\hline
\end{tabular}\hfill{}

\smallskip

\hfill{}$J_{s}$  indicator function. \hfill{}

\caption{\label{tab:gesture-test-pec}Shape determination test using $J_{p}$ and $J_{s}$  for elastic rigid body without noise.}
\end{table}

In the noisy case with noise level of $5\%$,  the positions found in the first stage are shown in Table \ref{tab:location-test-pec-noise}.
the difference between the exact and estimated positions is still very small.
The result of gesture recognition is shown in Table \ref{tab:gesture-test-pec-noise}, which clearly shows that  all the correct pairs matches the best.
The test with noisy data shows the robustness with respect to
noisy measurement data  of both the locating and recognition indicator functions $I_s$ in \eqref{eq:indicator:s} and $J_s$ in \eqref{eq:indicator:s:multi}, respectively. This salient robustness
is due to the inner product operation, which eliminates implicitly the noisy part in light of the orthogonality.

%
%

\begin{table}
\hfill{}%
\begin{tabular}{|c|c|c|c|c|c|c|}
\hline
 & $D_{1}$ & $D_{2}$ & $D_{3}$ & $D_{4}$ & $D_{5}$ & $D_{6}$\tabularnewline
\hline
\hline
$\mathring{z}_{0}^{1}$                  & $39.8738$ & $39.7611$ & $39.8742$ & $39.8632$ & $40.1075$ & $39.8453$\tabularnewline
\hline
$\mathring{z}_{0}^{2}$                  & $0.1307$  & $0.3280$  & $-0.0896$ & $0.1542$  & $-0.1451$ & $0.1321$\tabularnewline
\hline
$\mathring{z}_{0}^{3}$                  & $-0.1731$ & $-0.2110$ & $-0.1204$ & $-0.1456$ & $-0.1695$ & $-0.1475$\tabularnewline
\hline
$\left|\mathring{z}_{0}-z_{0}\right|$   & $0.2509$  & $0.4574$  & $0.1958$  & $0.2524$  & $0.2477$  & $0.2513$\tabularnewline
\hline
\end{tabular}\hfill{}

\caption{\label{tab:location-test-pec-noise}Location test using $I_{s}$  for elastic  rigid body with noise $5\%$.}
\end{table}
\begin{table}
\hfill{}%
\begin{tabular}{|c|c|c|c|c|c|c|}
\hline
 & $D_{1}$ & $D_{2}$ & $D_{3}$ & $D_{4}$ & $D_{5}$ & $D_{6}$\tabularnewline
\hline
\hline
$D_{1}$ & $\boldsymbol{1.0000}$ & $0.9453$ & $0.9632$ & $0.8213$ & $0.9182$ & $0.9649$\tabularnewline
\hline
$D_{2}$ & $0.9431$ & $\boldsymbol{1.0000}$ & $0.9374$ & $0.8864$ & $0.9205$ & $0.9061$\tabularnewline
\hline
$D_{3}$ & $0.9651$ & $0.9255$ & $\boldsymbol{1.0000}$ & $0.9213$ & $0.9070$ & $0.9124$\tabularnewline
\hline
$D_{4}$ & $0.8268$ & $0.8811$ & $0.9219$ & $\boldsymbol{1.0000}$ & $0.9621$ & $0.9450$\tabularnewline
\hline
$D_{5}$ & $0.9152$ & $0.9213$ & $0.9071$ & $0.9491$ & $\boldsymbol{1.0000}$ & $0.9378$\tabularnewline
\hline
$D_{6}$ & $0.9649$ & $0.9066$ & $0.9123$ & $0.9459$ & $0.9367$ & $\boldsymbol{1.0000}$\tabularnewline
\hline
\end{tabular}\hfill{}

\caption{\label{tab:gesture-test-pec-noise}Shape determination test using $J_{s}$  for elastic rigid body with noise $5\%$.}
\end{table}

\subsection{Medium case}

In the second example, we test with an inhomogeneous elastic medium among the six reference gesture domains.

\[
n_{k,\,\Omega}=\begin{cases}
1 & x\in\mathbb{R}^{3}\backslash\bar{\Omega}\\
5 & x\in\Omega
\end{cases}.
\]

The results of location and shape determination tests are shown, respectively, in Tables~\ref{tab:location-test-medium} and \ref{tab:gesture-test-medium} for the noise-free case,
and in Tables~\ref{tab:location-test-medium-noise} and \ref{tab:gesture-test-medium-noise} for the noisy case with $5\%$ noise level.
Both noise-free and noisy cases tell us that our locating and shape determination algorithms are very robust with noise and work very well even with data of limited aperture.
Besides, the computational efforts is quite less and the shape determination schemes are very efficient only involving with inner product by known data at hand.

\begin{table}
\hfill{}%
\begin{tabular}{|c|c|c|c|c|c|c|}
\hline
 & $D_{1}$ & $D_{2}$ & $D_{3}$ & $D_{4}$ & $D_{5}$ & $D_{6}$\tabularnewline
\hline
\hline
$\mathring{z}_{0}^{1}$ & $40.0417$ & $40.0254$ & $39.9576$ & $40.0279$ & $40.0069$ & $39.9837$\tabularnewline
\hline
$\mathring{z}_{0}^{2}$ & $-0.0214$ & $-0.0120$ & $-0.0446$ & $0.0434$ & $-0.0031$ & $-0.0338$\tabularnewline
\hline
$\mathring{z}_{0}^{3}$ & $0.0257$ & $0.0068$ & $0.0031$ & $-0.0370$ & $-0.0488$ & $0.0294$\tabularnewline
\hline
$\left|\mathring{z}_{0}-z_{0}\right|$ & $0.0535$ & $0.0289$ & $0.0616$ & $0.0635$ & $0.0494$ & $0.0477$\tabularnewline
\hline
\end{tabular}\hfill{}

\caption{\label{tab:location-test-medium}Location test using $I_{s}$  for elastic media without noise. }
\end{table}
\begin{table}
\hfill{}%
\begin{tabular}{|c|c|c|c|c|c|c|}
\hline
 & $D_{1}$ & $D_{2}$ & $D_{3}$ & $D_{4}$ & $D_{5}$ & $D_{6}$\tabularnewline
\hline
\hline
$D_{1}$ & $\boldsymbol{1.0000}$ & $0.9311$ & $0.8848$ & $0.9393$ & $0.8716$ & $0.9418$\tabularnewline
\hline
$D_{2}$ & $0.9312$ & $\boldsymbol{1.0000}$ & $0.9174$ & $0.8838$ & $0.9100$ & $0.9086$\tabularnewline
\hline
$D_{3}$ & $0.8834$ & $0.9179$ & $\boldsymbol{1.0000}$ & $0.9295$ & $0.9740$ & $0.8854$\tabularnewline
\hline
$D_{4}$ & $0.9398$ & $0.8899$ & $0.9004$ & $\boldsymbol{1.0000}$ & $0.9200$ & $0.9864$\tabularnewline
\hline
$D_{5}$ & $0.8737$ & $0.9171$ & $0.9422$ & $0.9183$ & $\boldsymbol{1.0000}$ & $0.9131$\tabularnewline
\hline
$D_{6}$ & $0.9346$ & $0.9087$ & $0.8557$ & $0.9131$ & $0.9089$ & $\boldsymbol{1.0000}$\tabularnewline
\hline
\end{tabular}\hfill{}

\caption{\label{tab:gesture-test-medium}Shape determination test using $J_{s}$  for elastic media without noise.}
\end{table}

\begin{table}
\hfill{}%
\begin{tabular}{|c|c|c|c|c|c|c|}
\hline
 & $D_{1}$ & $D_{2}$ & $D_{3}$ & $D_{4}$ & $D_{5}$ & $D_{6}$\tabularnewline
\hline
\hline
$\mathring{z}_{0}^{1}$ & $39.9758$ & $39.9762$ & $39.9722$ & $39.9819$ & $39.9586$ & $39.9529$\tabularnewline
\hline
$\mathring{z}_{0}^{2}$ & $-0.0091$ & $0.0103$ & $-0.0383$ & $-0.0076$ & $-0.0238$ & $0.0429$\tabularnewline
\hline
$\mathring{z}_{0}^{3}$ & $0.0095$ & $0.0211$ & $-0.0203$ & $0.0008$ & $0.0301$ & $0.0230$\tabularnewline
\hline
$\left|\mathring{z}_{0}-z_{0}\right|$ & $0.0275$ & $0.0334$ & $0.0515$ & $0.0197$ & $0.0565$ & $0.0677$\tabularnewline
\hline
\end{tabular}\hfill{}

\caption{\label{tab:location-test-medium-noise}Location test using $I_{s}$  for elastic media with noise $5\%$.}
\end{table}
\begin{table}
\hfill{}%
\begin{tabular}{|c|c|c|c|c|c|c|}
\hline
 & $D_{1}$ & $D_{2}$ & $D_{3}$ & $D_{4}$ & $D_{5}$ & $D_{6}$\tabularnewline
\hline
\hline
$D_{1}$ & $\boldsymbol{1.0000}$ & $0.8800$ & $0.9109$ & $0.9321$ & $0.8895$ & $0.9280$\tabularnewline
\hline
$D_{2}$ & $0.8738$ & $\boldsymbol{1.0000}$ & $0.8666$ & $0.9520$ & $0.9303$ & $0.9032$\tabularnewline
\hline
$D_{3}$ & $0.9105$ & $0.8656$ & $\boldsymbol{1.0000}$ & $0.9095$ & $0.8817$ & $0.9021$\tabularnewline
\hline
$D_{4}$ & $0.9320$ & $0.9221$ & $0.9802$ & $\boldsymbol{1.0000}$ & $0.9160$ & $0.9287$\tabularnewline
\hline
$D_{5}$ & $0.8863$ & $0.9285$ & $0.8819$ & $0.9162$ & $\boldsymbol{1.0000}$ & $0.9169$\tabularnewline
\hline
$D_{6}$ & $0.9279$ & $0.9030$ & $0.9256$ & $0.9141$ & $0.9171$ & $\boldsymbol{1.0000}$\tabularnewline
\hline
\end{tabular}\hfill{}

\caption{\label{tab:gesture-test-medium-noise}Shape determination test using $J_{s}$  for elastic media with noise $5\%$.}
\end{table}

\section{Conclusion}\label{sect:conclusion}
We proposed and analyzed a reconstruction scheme with elastic wave detection for the nearly incompressible materials. The inverse scattering theory of elastic wave is of essential importance.  We employed the translation relations between the scattering by incident elastic point source and the scattering by incident elastic plane waves. We also analysed the asymptotic amplitude of scattering shear wave and pressure wave. Besides, we presented a detailed low frequency analysis for the nearly incompressible materials. With these theoretical analysis, we proposed a two-stage reconstruction algorithm. In the first stage, we employ low frequency scattering data to locate the positions with special designed test functions, and in the second stage we employ the regular frequency scattering data to determine the shapes. We use a precomputed dictionary to store the data needed for the reconstruction algorithm, and the involved computations are mainly inner products of the corresponding data, which are very robust to noise. Various numerical experiments also show the efficiency of the proposed algorithm.

\section*{Acknowledgement}
{\small
The work of J. Li was supported by the NSF of China under the grants No.\, 11571161 and 11731006, the Shenzhen Sci-Tech Fund No. JCYJ20160530184212170 and the SUSTech startup fund.
The work of H. Liu was supported by the FRG grants and startup fund from Hong Kong Baptist University, and Hong Kong RGC General Research Funds (12302415 and 12302017).
H. Sun acknowledges the support of
Fundamental Research Funds for the Central Universities, and the
research funds of Renmin University of China (15XNLF20) and NSF of China under grant No. \,11701563.
}

\bibliographystyle{plain}

\begin{thebibliography}{99}
\bibitem{AR} {K. Aki, P. G. Richards},  {\it Quantitative Seismology}, University Science Books, Sausalito, California, Second Edition, 2002.

\bibitem{BLP} {L. Beir\~{a}o da Veiga, C. Lovadina, L. F. Pavarino}, {\it Positive definite balancing Neumann-Neumann preconditioners for nearly incompressible elasticity}, Numer. Mathematik, 104, 2006, pp. 271--296.

\bibitem{AA1} {H. Ammari, T. Boulier, J. Garnier}, {\it Modeling active electrolocation in weakly
electric fish}, SIAM J. Imaging Sci., 6 (2013), pp. 285--321.

\bibitem{AA2} {H. Ammari, T. Boulier, J. Garnier, W. Jing, H. Kang, H. Wang}, {\it Target identification using dictionary matching of generalized polarization tensors}, Found. Comput.
Math., 14(2014), pp. 27--62.

\bibitem{AA3} {H. Ammari, M. Tran, H. Wang}, {\it Shape identification and classification in echolocation}, SIAM J. Imaging Sci., 7(3), (2014), pp. 1883--1905.


\bibitem{AK} {C. J. S. Alves, R. Kress}, {\it On the far-field operator in elastic obstacle scattering}, IMA J. Appl. Math.,
67(1):1--21, 2002.

\bibitem{BM} {D. Braess, P. Ming}, {\it A finite element method for nearly incompressible elasticity problems}, Mathematics of Computation, 74(249), pp. 25--52, 2004.


\bibitem{CK0} {D. Colton, R. Kress}, {\it Integral Equation Methods in Scattering Theory}, New York: Wiley-Interscience, 1983.

\bibitem{CK} {D. Colton, R. Kress}, {\it Inverse Acoustic and Electromagnetic Scattering Theory}, Springer, Applied Mathematical Sciences Vol.93, Third Edition, 2013.

\bibitem{DR} {G. Dassios, R. Kleinman},  {\it Low Frequency Scattering}, Clarendon Press, Oxford, 2000.

\bibitem{PH0} {P. H\"{a}hner}, {\it A uniqueness theorem in inverse scattering of elastic waves}, IMA J. Appl. Math., 51, pp. 201--215, 1993.

\bibitem{PHhab} {P. H\"{a}hner}, {\it On acoustic, electromagnetic, elastic scattering problems in inhomogeneous media}, Habilitation thesis, 1998.

\bibitem{PH} {P. H\"{a}hner,  G. C. Hsiao}, {\it Uniqueness theorems in inverse obstacle scattering of elastic waves}, Inverse Problems, 9 (1993) pp. 525--534.


\bibitem{HLLS} {G. Hu, J. Li, H. Liu, H. Sun}, {\it Inverse elastic scattering for multiscale rigid bodies with a single far-field
pattern}, SIAM J. Imaging Sci., 7(3), pp. 1799--1825, 2014.



\bibitem{KK} {K. Kiriaki}, {\it A unique solvable integral equation for the Neumann problem in linear elasticity}, Applicable Analysis, Vol. 73(3--4), pp. 379--392, 1999.

\bibitem{Kir1} { A. Kirsch, F. Hettlich}, {\it The Mathematical Theory of Time-Harmonic Maxwell's Equations
Expansion-, Integral-, and Variational Methods}, Applied Mathematical Sciences, Vol. 190, Springer 2015.


\bibitem{Klib1} {M. V. Klibanov}, {\it On the first solution of a long standing problem: Uniqueness of the phaseless quantum inverse scattering problem in 3-d}, Applied Mathematics Letters,
Vol. 37, 2014, pp. 82--85.

\bibitem{Klib2} {M. V. Klibanov, V. G. Romanov}, {\it Two reconstruction procedures for a 3D phaseless inverse scattering problem for the generalized Helmholtz equation},   Inverse Problems, Vol. 32, No. 1, 015005, 2016.


\bibitem{KV} {H. Kobayashia, R. Vanderby}, {\it Acoustoelastic analysis of reflected waves in nearly incompressible, hyper-elastic materials: Forward and inverse problems}, J. Acoust. Soc. Am., 121, pp. 879--887, 2007.

\bibitem{Kup} {V. D. Kupradze}, {\it Three-dimensional Problems of the Mathematical Theory of Elasticity and Thermoelasticity}, North-Holland, 1979.


\bibitem{LAU}  {L. D. Landau, L. P. Pitaevskii, A. M. Kosevich, E. M. Lifshitz}, {\it Theory of Elasticity}, Third Edition: Vol. 7 (Course of Theoretical Physics), Butterworth-Heinemann, Oxford, 1986.

\bibitem{LLS} {J. Li, H. Liu, H. Sun}, {\it On a gesture-computing technique using electromagnetic waves}, \url{http://arxiv.org/abs/1708.02848}, 2017.

\bibitem{LWY} {H. Liu, Y. Wang, C. Yang}, {\it Mathematical design of a novel gesture-based instruction/input device using wave detection}, SIAM J. Imaging Sci., 9(2) (2016), pp. 822--841.

\bibitem{MDR} {P. H. Mott, J. R. Dorgan, C. M. Roland}, {\it The bulk modulus and Poisson's ratio
of ``incompressible" materials}, Journal of Sound and Vibration, 312, pp. 572--575, 2008.

\bibitem{MRR} {P. H. Mott, C. M. Roland, R. D. Corsaro}, {\it Acoustic and dynamic mechanical properties of a polyurethane rubber}, J. Acoust. Soc. Am. 111 (4), pp. 1782--1790, 2002.

\bibitem{Sini} {D. Prasad, M. Sini}, {\it The Foldy-Lax approximation of the scattered waves by many small bodies for the Lame system}, Mathematische Nachrichten, 288(16), pp. 1834--1872 (2015).


\bibitem{SMG} {K. R. Srinivasan, K. Matou\v{s}, P. H. Geubelle}, {\it Generalized finite element method for modeling nearly
incompressible bimaterial hyperelastic solids}, Comput. Methods Appl. Mech. Engrg., 197, pp. 4882--4893, 2008.

\bibitem{SBC} {B. A. Szab\'{o}, I. Babu\v{s}ka, B. K. Chayapathy}, {\it Stress computations for nearly incompressible materials by the p-version of the finite element method}, Int. J. Numer. Meth. Engng., 28(9), pp. 2175--2190, 1989.

\bibitem{YT}  {A. F. Yee, M. T. Takemori}, {\it Dynamic bulk and shear relaxation in glassy polymers. I. Experimental techniques and results on PMMA}, Journal of Polymer Science: Polymer Physics Edition, 20, pp. 205--224, 1982.



\end{thebibliography}

\end{document}